\newtheorem{theorem}{Theorem}[section]
\newtheorem{proposition}[theorem]{Proposition}
\newtheorem{lemma}[theorem]{Lemma}
\newtheorem{corollary}[theorem]{Corollary}
\theoremstyle{definition}
\newtheorem{definition}[theorem]{Definition}
\theoremstyle{remark}
\newtheorem*{remark}{Remark}
\newtheorem*{remarks}{Remarks}
\newcommand{\Z}{{\mathbb{Z}}}
\newcommand{\C}{{\mathbb{C}}}
\newcommand{\R}{{\mathbb{R}}}
\newcommand{\E}{{\mathbb{E}}}
\newcommand{\HH}{{\mathbb{H}}}
\newcommand{\OO}{{\mathcal O}}
\newcommand{\ld}{\Delta_{\Omega}}
\newcommand{\tpo}{\tilde{P}^{\Omega}}
\newcommand{\po}{P^{\Omega}}
\newcommand{\pon}{\tilde{P}_N^{\Omega}}
\newcommand{\lpr}{L^p(\R^d)}
\newcommand{\lpo}{L^p(\Omega)}
\newcommand{\tz}{2^{\Z}}
\newcommand{\eps}{\varepsilon}
\DeclareMathOperator*{\diam}{diam}
\DeclareMathOperator*{\dist}{dist}
\DeclareMathOperator*{\supp}{supp}
\numberwithin{equation}{section}
\newcommand{\qtq}[1]{\quad\text{#1}\quad}
\title{Riesz transforms outside a convex obstacle}
\author[R. Killip]{Rowan Killip}
\address{Department of Mathematics, UCLA}%
\email{killip@math.ucla.edu}
\author[M. Visan]{Monica Visan}
\address{Department of Mathematics, UCLA}
\email{visan@math.ucla.edu}
\author[X. Zhang]{Xiaoyi Zhang}
\address{Department of Mathematics, University of Iowa, and Chinese Academy of Science, Beijing}%
\email{zh.xiaoyi@gmail.com}
\begin{document}

\begin{abstract}
The goal of this paper is to develop some basic harmonic analysis tools for the Dirichlet Laplacian in the exterior domain associated to a smooth convex obstacle in dimensions $d\geq 3$.  Specifically, we will discuss analogues   of the Mikhlin Multiplier Theorem, Littlewood--Paley Theory, and Hardy inequalities, culminating in a proof that homogeneous Sobolev norms defined with respect to the Dirichlet and whole-space Laplacians are equivalent for the sharp ranges of integrability exponent $p$ and regularity $s$.   Counterexamples are included to show that these results are indeed sharp.  In particular, we precisely settle the question of boundedness of Riesz transforms on $L^p$, including the endpoint.

The utility of such results in the study of nonlinear PDE is that they allow us to deduce important results, such as the fractional product and chain rules for the Dirichlet Laplacian, directly from the classical Euclidean setting.  As an application, we discuss the local well-posedness and stability problems for energy-critical NLS.  All the results of this paper play an essential role in the authors' proof of large-data global well-posedness and scattering for the energy-critical NLS in three dimensional exterior domains; see \cite{KVZ:convex}.
\end{abstract}

\maketitle

\section{Introduction}

Throughout this paper, $\Omega$ will denote the complement of a compact convex body $\Omega^c\subset\R^d$ with smooth boundary and $d\geq 3$.  We consider the Dirichlet Laplacian on $\Omega$, which we denote by $-\Delta_\Omega$.  For functions $f\in C^\infty_c(\Omega)$ the action of the Laplacian is unambiguous: $\Delta_\Omega f (x)  = \sum \partial_j^2 f(x)$.  For more general functions, the boundary condition plays a role.

Let us define $H_D^1(\Omega)$, where $D$ stands for Dirichlet, as the completion of $C^\infty_c(\Omega)$ with respect to the inner product
$$
\langle f,\;g\rangle_{H_D^1(\Omega)}  = \int_\Omega \overline{\nabla f(x)} \cdot \nabla g(x)  + \overline{f(x)} g(x)\,dx.
$$
Note that this is exactly how the classical Sobolev space $H_0^1(\Omega)$ is defined; the spaces are identical.  However, below we will see how different
natural generalizations of these spaces lead to norms that are not always equivalent; indeed, this is a central theme of this paper.

As the $H^1_D(\Omega)$ norm controls the $L^2(\Omega)$ norm (and $C^\infty_c(\Omega)\subseteq L^2(\Omega)$), we can naturally view $H^1_D(\Omega)$ as a subset of $L^2(\Omega)$.  By construction,
$$
Q: f \mapsto \int_\Omega \overline{\nabla f(x)} \cdot \nabla f(x) \,dx
$$
defines a closed quadratic form on $L^2(\Omega)$ with domain $H_D^1(\Omega)$.  Thus, there is a unique (unbounded) non-negative self-adjoint operator on $L^2(\Omega)$ for which $Q$ is the associated quadratic form.  This operator is the Dirichlet Laplacian  $-\Delta_\Omega$.   The space $H_D^1(\Omega)$ is the domain of the square root of this operator. (For these deductions and much more, see \cite[Ch.~VIII]{RS1} or \cite[Ch.~6]{Kato:pert}.)

From the spectral theorem, one can then define general functions of the operator $-\Delta_\Omega$, at least as (possibly unbounded) operators on $L^2(\Omega)$.  In this way one obtains solutions to heat, wave, Schr\"odinger, Poisson, and other equations, at least in some abstract Hilbert-space sense.  We discuss briefly how these solutions can be interpreted more concretely, taking the heat equation as our basic model; we will at least see the sense in which $-\Delta_\Omega$ is the Dirichlet Laplacian.

Given $f\in C^\infty_c(\Omega)$, the spectral theorem guarantees that $u(t):=e^{t\Delta_\Omega} f$ is a $C^\infty$ function of time $t\in[0,\infty)$ with values in $H^1_D(\Omega)$.  By testing against general functions $G\in C^\infty_c([0,\infty)\times\Omega)$ and $g\in C^\infty_c(\Omega)$, one quickly sees that $u$ is distributional solution to the heat equation and that $u(t)$ and $\partial_t u(t)$ belong to the classical Sobolev spaces $H^{k}(\Omega)$ for all integers $k\geq 0$.  (Recall that $H^{k}(\Omega)=W^{k,2}(\Omega)$ is defined as the set of functions $f\in L^2(\Omega)$ whose distributional derivatives of order $\leq k$ belong to $L^2(\Omega)$.)  Thus $u$ is a (smooth) classical solution to the heat equation on $[0,\infty)\times\Omega$.

To see that $u$ vanishes on $\partial\Omega$, we may either invoke the classical trace theorem for $H_0^1(\Omega)=H^1_D(\Omega)$ or combine the regularity described above with the Hardy inequality (cf. Lemmas~\ref{L:Hardy Laplacian} and~\ref{L:Hardy Dirichlet}).  Either way, the maximum principle guarantees that there is a unique bounded solution to the heat equation vanishing on $\partial\Omega$ with initial data $f\in C^\infty_c(\Omega)$ and so we may refer to $e^{t\Delta_\Omega}$ as \emph{the} fundamental solution to the heat equation with Dirichlet boundary conditions.  The corresponding heat kernel $e^{t\Delta_\Omega}(x,y)$ will play a key role in the discussion below.

There is a natural family of Sobolev spaces associated to powers of the Dirichlet Laplacian.  Our notation for these is as follows:

\begin{definition}
For $s\geq 0$ and $1<p<\infty$, let $\dot H_D^{s,p}(\Omega)$ and $H_D^{s,p}(\Omega)$ denote the completions of $C^\infty_c(\Omega)$ under the norms
$$
\| f \|_{\dot H_D^{s,p}(\Omega)} := \| (-\Delta_\Omega)^{s/2} f \|_{L^p} \qtq{and} \| f \|_{H_D^{s,p}(\Omega)} := \| (1-\Delta_\Omega)^{s/2} f \|_{L^p}.
$$
For a proof that $(-\Delta_\Omega)^{s/2} f \in L^p(\Omega)$ for $f\in C^\infty_c(\Omega)$, see the proof of Theorem~\ref{T:sq}.  When $p=2$ we write $\dot H^s_D(\Omega)$ and $H^s_D(\Omega)$ for $\dot H^{s,2}_D(\Omega)$ and $H^{s,2}_D(\Omega)$, respectively.
\end{definition}

When $\Omega$ is replaced by $\R^d$, these definitions lead to the classical $\dot H^{s,p}(\R^d)$ and $H^{s,p}(\R^d)$ families of spaces.  For non-integer $s$, these are distinct from the $\dot W^{s,p}(\R^d)$ and $W^{s,p}(\R^d)$ spaces.

We warn the reader that the paper \cite{SmithSogge} uses the very similar notation $\dot H_D^{s}(\Omega)$ for another notion of Dirichlet Sobolev space (in the $p=2$ setting).  The authors do not discuss the connection to powers of the Dirichlet Laplacian; however the results of this paper show that the two definitions (here and in \cite{SmithSogge}) coincide for $s<3/2$.  Simpler arguments to this effect are available in the regime $0<s\leq 1$, exploiting the equivalence at $s=1$, which is self-evident.

Sobolev spaces on domains are usually introduced as subspaces/quotients of the corresponding spaces on $\R^d$, rather than via the Dirichlet (or Neumann) Laplacian:

\begin{definition}
Fix $s\geq 0$ and $1<p<\infty$.  The space $\dot H^{s,p}(\Omega)$ consists of the restrictions of functions in $\dot H^{s,p}(\R^d)$ to $\Omega$, together with the norm
$$
\|f\|_{\dot H^{s,p}(\Omega)}=\inf \bigl\{ \|g\|_{\dot H^{s,p}(\R^d)}:\, g|_{\Omega}=f \bigr\}.
$$
The space $\dot H^{s,p}_0(\Omega)$ is defined as the completion of $C^\infty_c(\Omega)$ in $\dot H^{s,p}(\Omega)$, while the space $\dot H^{s,p}_{00}(\Omega)$ is defined as the completion of $C^\infty_c(\Omega)$ in $\dot H^{s,p}(\R^d)$.   The inhomogeneous spaces $H^{s,p}(\Omega)$, $ H^{s,p}_0(\Omega)$, and $H^{s,p}_{00}(\Omega)$ are defined analogously.
\end{definition}

The notation $H^{s,p}_{00}(\Omega)$ is taken from the works of Lions--Magenes (cf. \cite{LionsMagenes:vol1}), where it was introduced in a different manner (and only when $p=2$), but later shown to coincide with this definition.
There is a subtle difference between the spaces $H^{s,p}_{00}$ and $H^{s,p}_{0}$ even in the case of a half-space.   Both are based on extensions to all of $\R^d$ and the $H^{s,p}(\R^d)$ norm; however, in the former case one must extend the function as zero, while in the latter one may extend the function more generally (e.g. via odd reflection).  For a smooth bounded domain $\OO$, it is well known that $H^{s,p}_{00}(\OO) = H^{s,p}_{0}(\OO)$ except when $s-\frac1p$ is an integer.  When $s-\frac1p$ is an integer, there is strict containment.

One of the key questions we address is whether
\begin{equation}\label{E:our question}
\dot H^{s,p}_{00}(\Omega) = \dot H^{s,p}_{D}(\Omega), \quad\text{in the sense of equivalent norms?}
\end{equation}
That we are considering \emph{homogeneous} spaces lies at the heart of this question.  These are the natural spaces when studying scale-invariant PDE such as the energy-critical NLS discussed in Section~\ref{SS:NLS}. The treatment of inhomogeneous spaces is rather different; the additional decay guaranteed by the $L^p$ bound allows one to localize, essentially reducing matters to the case of bounded domains.  The subtleties of question \eqref{E:our question} lie in the behaviour at large length scales.

The analogue of question \eqref{E:our question} for smooth bounded domains $\OO$ has been completely settled.  Note that in that case, $\dot H^{s,p}_{00}(\OO)= H^{s,p}_{00}(\OO)$; we will use the latter notation when reviewing the known results in this direction.

For smooth bounded domains, much effort has been expended on the explicit characterization of the spaces $H_D^{s,p}(\OO)$; see in particular the papers
\cite{Fujiwara,Grisvard,Seeley}, which also cover higher-order operators and those with non-constant coefficients, as well as \cite{Seeley:ICM}, for an introduction to this work. Note that the spaces $H_D^{s,p}(\OO)$ can also be identified as the domains of powers of $-\Delta_\OO$ acting on $L^p(\OO)$.  Moreover, as imaginary powers $(-\Delta_\OO)^{it}$ are $L^p$-bounded (by a polynomial in $t$), one may also interpret $H_D^{s,p}(\OO)$ as complex interpolation spaces.  From the papers just referenced we find the following:
$$
H_D^{s,p}(\OO) = \begin{cases} H_{00}^{s,p}(\OO) &: 0\leq s < 1 + \tfrac1p \\[0.5ex]   H_{00}^{1,p}(\OO)\cap H^{s,p}(\OO) &: 1 + \tfrac1p \leq s \leq 2 \end{cases} 	\quad\text{on smooth bounded domains}.
$$
More accurately, to phrase the results in this manner one needs to combine those papers with well-known  inter-relations between the classical Sobolev spaces, such as $H_{00}^{s,p}(\OO)=H_{0}^{s,p}(\OO)=H^{s,p}(\OO)$ when $0\leq s < \frac1p$.  We should also note that it follows that $H_D^{s,p}(\OO) \neq H_{00}^{s,p}(\OO)$ when $s\geq 1 + \tfrac1p$.  (For a simple direct argument to this effect, see Proposition~\ref{P:1+1/p}.)

When $s=1$, \eqref{E:our question} already contains the question of $L^p$-boundedness of Reisz transforms, that is, whether $\nabla (-\Delta_\Omega)^{-1/2}:L^p\to L^p$.  This is a topic of intensive investigation; see, for example, \cite{ACDH,Bakry,BorchMiya,CCH,CoulhonDuong,GuiHassell,HassellSikora,LSZ,Strichartz:Laplacian}, as well as the many references therein.  Much of this work focuses on the case of smooth boundary-less non-compact manifolds.  While such general results do strongly suggest what may hold in the case of exterior domains, it seems that only the papers \cite{BorchMiya,HassellSikora,LSZ} single this case out for particular attention.  One achievement of these earlier works has been to completely resolve the question of boundedness of Riesz transforms in the case of spherically symmetric functions in the exterior of the unit ball.  Specifically, in \cite{HassellSikora} it is shown that the Riesz transforms are bounded for $d=2$ and $p\leq 2$ and also for $d\geq 3$ and $p<d$.  When $d=3$ this result was also proved independently in \cite{LSZ}.

For the problem investigated in \cite{KVZ:convex}, one needs the generalization to some $s> 1$ and some $s<1$ in order to properly describe the behaviour of high- and low-frequency portions of the solution.  The Littlewood--Paley theory described in Section~\ref{SS:LP} is used extensively in \cite{KVZ:convex} as a means to divide up the solution into such high- and low-frequency portions.

As a culmination of the tools developed in this paper, we will prove the following:

\begin{theorem}\label{T:main}
Let $d\geq 3$ and $\Omega$ the complement of a compact convex body $\Omega^c\subset\R^d$ with smooth boundary.  Suppose $1<p<\infty$ and $0\leq s<\min\{1+\frac1p,\frac dp\};$ then
\begin{equation}\label{E:equiv norms}
\bigl\| (-\Delta_{\R^d})^{s/2} f \bigl\|_{L^p}  \sim_{d,p,s} \bigl\| (-\Delta_\Omega)^{s/2} f \bigr\|_{L^p}  \qtq{for all} f\in C^\infty_c(\Omega).
\end{equation}
Thus $\dot H_D^{s,p}(\Omega)=\dot H^{s,p}_{00}(\Omega)$ for these values of the parameters.
\end{theorem}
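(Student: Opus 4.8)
The plan is to reduce \eqref{E:equiv norms}, via subordination, to a single weighted inequality, which is then dispatched by the Hardy inequalities already at our disposal.

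Since the case $s=0$ is trivial, assume $0<s<1+\frac1p<2$. Write $\tilde f\in C^\infty_c(\R^d)$ for the extension of $f\in C^\infty_c(\Omega)$ by zero and set $\|f\|_{\dot H^{s,p}}:=\|(-\Delta_{\R^d})^{s/2}\tilde f\|_{L^p(\R^d)}$. The subordination formula gives $(-\Delta_\Omega)^{s/2}f=c_s\int_0^\infty\bigl(f-e^{t\Delta_\Omega}f\bigr)\,t^{-1-s/2}\,dt$ with $c_s:=1/|\Gamma(-s/2)|>0$, and likewise for $(-\Delta_{\R^d})^{s/2}\tilde f$; subtracting and using $\tilde f|_\Omega=f$ yields, on $\Omega$,
$$
(-\Delta_\Omega)^{s/2}f-(-\Delta_{\R^d})^{s/2}\tilde f=c_s\int_0^\infty\bigl(e^{t\Delta_{\R^d}}\tilde f-e^{t\Delta_\Omega}f\bigr)\,t^{-1-s/2}\,dt=:\mathcal Ef .
$$
By the triangle inequality, \eqref{E:equiv norms} follows once $\|\mathcal Ef\|_{L^p(\Omega)}$ is bounded by both $\|f\|_{\dot H^{s,p}}$ and $\|(-\Delta_\Omega)^{s/2}f\|_{L^p}$. (One must also control the contribution of the bounded set $\Omega^c$ to $\|(-\Delta_{\R^d})^{s/2}\tilde f\|_{L^p(\R^d)}$, needed for the $\dot H^{s,p}_{00}$ norm; this is a separate and softer point, handled via the off-diagonal decay of $(-\Delta_{\R^d})^{s/2}$ and a dyadic decomposition of $f$ according to its distance from the obstacle.)

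The heart of the matter is a pointwise comparison of the heat kernels. Writing $d(x):=\dist(x,\partial\Omega)$, I would prove, for $x,y\in\Omega$,
$$
0\le e^{t\Delta_{\R^d}}(x,y)-e^{t\Delta_\Omega}(x,y)\lesssim t^{-d/2}e^{-c|x-y|^2/t}\exp\!\Bigl(-c\,\tfrac{d(x)\,d(y)}{t}\Bigr)
$$
when the segment $[x,y]$ lies in $\Omega$, and, when the obstacle separates $x$ from $y$, the cruder bound by $t^{-d/2}e^{-c|x-y|^2/t}$ alone (which is then harmless, since there $|x-y|$ is bounded below and $d(x)d(y)\lesssim|x-y|^2$). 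Establishing these bounds \emph{uniformly over the unbounded domain} — together with the companion Gaussian bounds on $\nabla_x e^{t\Delta_\Omega}(x,y)$ that underlie the Riesz-transform estimates feeding the Hardy inequality below — is where convexity of $\Omega^c$ is used essentially: it forces the favorable sign in the boundary term controlling $\nabla e^{t\Delta_\Omega}$ (the analogue of the nonnegative-Ricci hypothesis in the Riesz-transform literature cited above), and it controls the excess length of Brownian paths that must touch $\Omega^c$ in the representation
$$
e^{t\Delta_{\R^d}}(x,y)-e^{t\Delta_\Omega}(x,y)=\E^x\bigl[\,e^{(t-\tau)\Delta_{\R^d}}(B_\tau,y)\ ;\ \tau\le t\,\bigr],
$$
with $\tau$ the first hitting time of $\Omega^c$. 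I expect this step to be the main obstacle; the remaining steps are comparatively routine.

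Granting the kernel bounds, substitute them into $\mathcal Ef$ and carry out the elementary $t$-integral, $\int_0^\infty t^{-d/2-1-s/2}e^{-b/t}\,dt\asymp b^{-(d+s)/2}$, to obtain
$$
|\mathcal Ef(x)|\lesssim\int_\Omega\frac{|f(y)|}{\bigl(|x-y|^2+d(x)d(y)\bigr)^{(d+s)/2}}\,dy\qquad(x\in\Omega).
$$
The displayed kernel is better than Calder\'on--Zygmund near $\partial\Omega$; writing $|f(y)|=d(y)^{s}\bigl(d(y)^{-s}|f(y)|\bigr)$, a weighted Schur test (with a suitable power of $d(\cdot)$ as weight) shows that the associated integral operator is bounded on $L^p(\Omega)$, whence
$$
\|\mathcal Ef\|_{L^p(\Omega)}\lesssim_{d,p,s}\bigl\|\,\dist(\cdot,\partial\Omega)^{-s}\,f\,\bigr\|_{L^p(\Omega)}.
$$
The proof is then completed by the Hardy inequalities of Lemmas~\ref{L:Hardy Laplacian} and~\ref{L:Hardy Dirichlet}, namely $\|\dist(\cdot,\partial\Omega)^{-s}f\|_{L^p(\Omega)}\lesssim\|(-\Delta_{\R^d})^{s/2}f\|_{L^p}$ and $\lesssim\|(-\Delta_\Omega)^{s/2}f\|_{L^p}$ for $f\in C^\infty_c(\Omega)$, which supply exactly the two required bounds on $\|\mathcal Ef\|_{L^p(\Omega)}$. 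These Hardy inequalities are valid precisely when $s<\min\{1+\frac1p,\frac dp\}$: the threshold $1+\frac1p$ reflects the integrability of $\dist(\cdot,\partial\Omega)^{-sp}$ near $\partial\Omega$, where the vanishing of $C^\infty_c(\Omega)$ functions buys one extra order of decay (so that $(s-1)p<1$ is what is needed, in line with the sharp bounded-domain threshold recorded in Proposition~\ref{P:1+1/p}), while the threshold $\frac dp$ reflects the behavior at spatial infinity, where $\dist(x,\partial\Omega)\asymp|x|$ and $\Omega$ is essentially $\R^d$. This is exactly the range asserted in Theorem~\ref{T:main}, and combining all of the above (with the $\Omega^c$ contribution) gives \eqref{E:equiv norms} and hence the stated identifications of Sobolev spaces.
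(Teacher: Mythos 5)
Your overall architecture is essentially the paper's, repackaged: the paper likewise reduces \eqref{E:equiv norms} to the single weighted bound $\bigl\|\dist(\cdot,\Omega^c)^{-s}f\bigr\|_{L^p(\Omega)}$ for the discrepancy between the two functional calculi (Proposition~\ref{P:diff}) and then closes with Lemmas~\ref{L:Hardy Laplacian} and~\ref{L:Hardy Dirichlet}, exactly as you do. Your genuine deviation is to use Balakrishnan subordination in place of the heat-kernel square functions of Theorem~\ref{T:sq}; for this particular theorem that is a legitimate and somewhat leaner route, since it exploits $s<1+\tfrac1p<2$ and bypasses the multiplier theorem and Littlewood--Paley theory. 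Moreover, granting your kernel bound, the rest does work: since $|d(x)-d(y)|\le|x-y|$, your kernel $\bigl(|x-y|^2+d(x)d(y)\bigr)^{-\frac{d+s}2}$ is pointwise comparable to the paper's $\bigl(d(x)^2+d(y)^2+|x-y|^2\bigr)^{-\frac{d+s}2}$, so the weighted Schur test of Proposition~\ref{P:diff} (weight $(d(x)/d(y))^\alpha$ with $0<\alpha<p'\wedge ps$) applies verbatim and yields $\|\mathcal Ef\|_{L^p(\Omega)}\lesssim\|d(\cdot)^{-s}f\|_{L^p(\Omega)}$.

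The genuine gap is at the pivotal analytic input, which you assert but do not prove: the pointwise bound $0\le e^{t\Delta_{\R^d}}(x,y)-e^{t\Delta_\Omega}(x,y)\lesssim t^{-d/2}e^{-c|x-y|^2/t}e^{-c\,d(x)d(y)/t}$. You flag it as ``the main obstacle'' and sketch a route (Brownian hitting-time representation, Gaussian bounds on $\nabla_xe^{t\Delta_\Omega}$, Riesz-transform technology in the spirit of nonnegative Ricci curvature) that is both harder than necessary and partly off-target: no gradient heat-kernel bounds are needed anywhere in this argument, and Lemma~\ref{L:Hardy Dirichlet} is fed by Zhang's two-sided kernel bounds (Theorem~\ref{T:heat}) via the Riesz-potential estimate of Lemma~\ref{L:Riesz}, not by Riesz-transform boundedness. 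The missing idea is short: by convexity there is a halfspace $\HH_y$ with $y\in\HH_y\subset\Omega$ and $\dist(y,\partial\HH_y)=d(y)$; domain monotonicity (maximum principle) gives $e^{t\Delta_{\HH_y}}(x,y)\le e^{t\Delta_\Omega}(x,y)$, and the halfspace kernel is explicit by reflection, whence $0\le e^{t\Delta_{\R^d}}(x,y)-e^{t\Delta_\Omega}(x,y)\lesssim t^{-d/2}\exp\bigl(-c[d(x)^2+d(y)^2+|x-y|^2]/t\bigr)$ uniformly for $x\in\R^d$, $y\in\Omega$ --- this is the content of Lemma~\ref{L:kernel}, estimate \eqref{k3}. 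That bound is stronger than the one you posit (note $d(x)^2+d(y)^2\ge 2d(x)d(y)$), removes your case distinction on whether the segment $[x,y]$ lies in $\Omega$, and, being valid for all $x\in\R^d$, also absorbs the contribution of $\Omega^c$ that you defer to a ``softer'' argument. With this estimate supplied your proof closes; without it, the central step remains a conjecture.
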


In particular, by combining this with a density argument we will deduce the following generalization of the boundedness of Riesz transforms:

\begin{corollary}\label{C:Riesz}
Let $d\geq 3$ and $\Omega$ the complement of a compact convex body $\Omega^c\subset\R^d$ with smooth boundary.  Suppose $1<p<\infty$ and $0\leq s<\min\{1+\frac1p,\frac dp\};$ then
\begin{equation}\label{E:Riesz}
\bigl\| (-\Delta_{\R^d})^{s/2} (-\Delta_\Omega)^{-s/2} f \bigl\|_{L^p(\R^d)}  \lesssim \bigl\| f \bigr\|_{L^p(\Omega)}.
\end{equation}
\end{corollary}

In Section~\ref{SS:counter}, we will prove that the restriction on $s$ is sharp (including the question of endpoints).  When $s=1$ this sharpness was noted in earlier work on spherically symmetric functions in the exterior of the unit ball.  Specifically, \cite{HassellSikora} shows that Riesz transforms are unbounded when $p\geq d \geq 3$ and when $p>d=2$; see also the independent paper \cite{LSZ}, which treated $p>d=3$.  With regard to such counterexamples, we also draw attention to an earlier example \cite[\S 5]{CoulhonDuong} showing unboundedness of Riesz transforms for $p>d$ on the connected sum of two copies of $\R^d$.

As examples of the utility of Theorem~\ref{T:main}, we note that it gives the following corollaries of the existing product and chain rules from the Euclidean setting.  For the Euclidean results see \cite{ChW:fractional chain rule}, as well as \cite{Taylor:book} for a textbook treatment.

\begin{corollary}[Fractional product rule]\label{C:product rule}
For all $f, g\in C_c^{\infty}(\Omega)$, we have
\begin{align*}
\| (-\Delta_\Omega)^{\frac s2}(fg)\|_{L^p(\Omega)} \lesssim \| (-\Delta_\Omega)^{\frac s2} f\|_{L^{p_1}(\Omega)}\|g\|_{L^{p_2}(\Omega)}+
\|f\|_{L^{q_1}(\Omega)}\| (-\Delta_\Omega)^{\frac s2} g\|_{L^{q_2}(\Omega)}
\end{align*}
with the exponents satisfying $1<p, p_1, q_2<\infty$, $1<p_2,q_1\le \infty$,
\begin{align*}
\tfrac1p=\tfrac1{p_1}+\tfrac1{p_2}=\tfrac1{q_1}+\tfrac1{q_2}, \qtq{and} 0<s<\min\bigl\{ 1+\tfrac1{p_1}, 1+\tfrac1{q_2},\tfrac d{p_1},\tfrac d{q_2} \bigr\}.
\end{align*}
\end{corollary}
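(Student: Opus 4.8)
The plan is to deduce the corollary directly from Theorem~\ref{T:main} together with the known fractional product rule on $\R^d$ (from \cite{ChW:fractional chain rule}; see also \cite{Taylor:book}). First I would observe that for $f,g\in C^\infty_c(\Omega)$ extension by zero realizes $f$, $g$, and $fg$ as elements of $C^\infty_c(\R^d)$ supported in $\Omega$; in particular, each Lebesgue norm $L^r(\Omega)$ appearing in the statement coincides with the corresponding $L^r(\R^d)$ norm, and the same is true for the functions $(-\Delta_\Omega)^{s/2}f$, $(-\Delta_\Omega)^{s/2}g$, $(-\Delta_\Omega)^{s/2}(fg)$ once we know, via \eqref{E:equiv norms}, that the Dirichlet and Euclidean fractional Laplacians are comparable in the relevant $L^p$ norms.

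Next I would check that the hypothesis $0<s<\min\{1+\tfrac1{p_1},1+\tfrac1{q_2},\tfrac d{p_1},\tfrac d{q_2}\}$ forces the parameter restriction of Theorem~\ref{T:main} for all three Sobolev norms in play. For the factor $f$ measured in $L^{p_1}$ and the factor $g$ measured in $L^{q_2}$ this is immediate. For the product $fg$ measured in $L^p$ it follows because $\tfrac1p=\tfrac1{p_1}+\tfrac1{p_2}\ge\tfrac1{p_1}$, whence $\min\{1+\tfrac1p,\tfrac dp\}\ge\min\{1+\tfrac1{p_1},\tfrac d{p_1}\}>s$. Applying \eqref{E:equiv norms} three times (with exponents $p$, $p_1$, and $q_2$, all of which lie strictly between $1$ and $\infty$ by hypothesis) gives
\[
\|(-\Delta_\Omega)^{\frac s2}(fg)\|_{L^p(\Omega)}\sim\|(-\Delta_{\R^d})^{\frac s2}(fg)\|_{L^p},\qquad
\|(-\Delta_\Omega)^{\frac s2}f\|_{L^{p_1}(\Omega)}\sim\|(-\Delta_{\R^d})^{\frac s2}f\|_{L^{p_1}},
\]
and likewise $\|(-\Delta_\Omega)^{\frac s2}g\|_{L^{q_2}(\Omega)}\sim\|(-\Delta_{\R^d})^{\frac s2}g\|_{L^{q_2}}$.

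Finally I would invoke the Euclidean fractional product rule for the pair $f,g\in C^\infty_c(\R^d)$: under the stated conditions $1<p,p_1,q_2<\infty$, $1<p_2,q_1\le\infty$, the two Hölder relations, and $s>0$ (the classical result needs no upper bound on $s$, and permits the endpoint exponents $p_2$ or $q_1$ equal to $\infty$), one has
\[
\|(-\Delta_{\R^d})^{\frac s2}(fg)\|_{L^p}\lesssim\|(-\Delta_{\R^d})^{\frac s2}f\|_{L^{p_1}}\|g\|_{L^{p_2}}+\|f\|_{L^{q_1}}\|(-\Delta_{\R^d})^{\frac s2}g\|_{L^{q_2}}.
\]
Substituting the norm equivalences from the previous paragraph (and noting that the ``rough'' factors $\|g\|_{L^{p_2}}$ and $\|f\|_{L^{q_1}}$ are plain Lebesgue norms, unaffected by the passage between $\Omega$ and $\R^d$) yields the asserted inequality. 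Since every step is either a direct application of Theorem~\ref{T:main} or of an off-the-shelf Euclidean estimate, there is no genuine analytic obstacle here; the only point demanding care is the bookkeeping that confirms the parameter constraint of Theorem~\ref{T:main} holds simultaneously for the exponents $p$, $p_1$, and $q_2$.
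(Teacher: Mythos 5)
Your proposal is correct and follows essentially the same route the paper intends: Corollary~\ref{C:product rule} is presented there as a direct consequence of Theorem~\ref{T:main} combined with the Euclidean fractional product rule of \cite{ChW:fractional chain rule,Taylor:book}, which is precisely your argument. The exponent bookkeeping you supply (in particular that $\tfrac1p\ge\tfrac1{p_1}$ gives $s<\min\{1+\tfrac1p,\tfrac dp\}$ for the product $fg$) is the only detail the paper leaves implicit, and you have verified it correctly.
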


\begin{corollary}[Fractional chain rule]\label{C:chain rule}
Suppose $G\in C^1(\mathbb C)$, $s \in (0,1]$, and $1<p,p_1,p_2<\infty$ are such that $\frac 1p=\frac 1{p_1}+\frac 1{p_2}$ and
$0<s<\min\{1+\tfrac1{p_2},\tfrac d{p_2}\}$.   Then
$$
\|(-\Delta_\Omega)^{\frac s2}G(f)\|_{L^p(\Omega)}\lesssim \|G'(f)\|_{L^{p_1}(\Omega)}\| (-\Delta_\Omega)^{\frac s2} f\|_{L^{p_2}(\Omega)},
$$
uniformly for $f\in C^\infty_c(\Omega)$.
\end{corollary}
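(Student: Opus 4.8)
The plan is to transfer both sides of the inequality to $\R^d$ via Theorem~\ref{T:main} and then invoke the classical Euclidean fractional chain rule. Since neither side changes when $G$ is replaced by $G-G(0)$, and since this replacement is in any case needed for the left-hand side even to make sense ($\Omega$ being unbounded), I would assume at the outset that $G(0)=0$. Fix $f\in C^\infty_c(\Omega)$ and extend it by zero to an element of $C^\infty_c(\R^d)$, still denoted $f$; because $G(0)=0$, the function $G(f)$, now viewed on $\R^d$, is supported in $\supp f\subset\Omega$ and agrees there with $G(f)|_\Omega$.

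The next step is to check that the hypotheses of Theorem~\ref{T:main} are met for \emph{every} Sobolev norm that occurs. Since $\tfrac1p=\tfrac1{p_1}+\tfrac1{p_2}\ge\tfrac1{p_2}$, the assumption $0<s<\min\{1+\tfrac1{p_2},\tfrac d{p_2}\}$ also yields $0<s<\min\{1+\tfrac1p,\tfrac dp\}$; and no condition is needed for the $L^{p_1}$ norm of $G'(f)$, on which no derivative falls. One small wrinkle: $G$ is merely $C^1$, so $G(f)$ lies only in $C^1_c$, whereas \eqref{E:equiv norms} is stated on $C^\infty_c(\Omega)$. This is handled by a routine mollification argument: mollifying $G(f)$ in $\R^d$ gives a sequence converging in $\dot H^{s,p}(\R^d)$ (mollification commutes with the Fourier multiplier $(-\Delta_{\R^d})^{s/2}$, which maps $C^1_c(\R^d)$ into $L^p(\R^d)$), hence, by \eqref{E:equiv norms}, a sequence that is Cauchy in the $\|(-\Delta_\Omega)^{s/2}\cdot\|_{L^p}$ norm as well, so one may pass to the limit. (Alternatively, mollify $G$, re-subtracting $G_n(0)$ to keep $G_n(0)=0$, prove the estimate for $G_n\in C^\infty$, and let $G_n\to G$, $G_n'\to G'$ locally uniformly on the compact range of $f$.) Applying \eqref{E:equiv norms} at exponent $p$ to $G(f)$ and at exponent $p_2$ to $f$, we obtain
\begin{equation*}
\|(-\Delta_\Omega)^{\frac s2}G(f)\|_{L^p(\Omega)}\sim\|(-\Delta_{\R^d})^{\frac s2}G(f)\|_{L^p(\R^d)}
\qtq{and}
\|(-\Delta_\Omega)^{\frac s2}f\|_{L^{p_2}(\Omega)}\sim\|(-\Delta_{\R^d})^{\frac s2}f\|_{L^{p_2}(\R^d)}.
\end{equation*}

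It then remains to feed in the Euclidean fractional chain rule (see \cite{ChW:fractional chain rule,Taylor:book}), which for the present exponents and $s\in(0,1)$ gives $\|(-\Delta_{\R^d})^{s/2}G(f)\|_{L^p(\R^d)}\lesssim\|G'(f)\|_{L^{p_1}(\R^d)}\,\|(-\Delta_{\R^d})^{s/2}f\|_{L^{p_2}(\R^d)}$; since $\|G'(f)\|_{L^{p_1}(\R^d)}=\|G'(f)\|_{L^{p_1}(\Omega)}$ — the two being literally equal when $G'(0)=0$, and both infinite (whence the corollary is trivial) otherwise — combining the three displays proves the result for $0<s<1$, with implied constant depending only on $d,p,p_1,p_2,s$. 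The endpoint $s=1$ I would handle directly: on the admissible $L^p$ range $(-\Delta_\Omega)^{1/2}$ is equivalent to $\nabla$ (again by Theorem~\ref{T:main} together with the $L^p$-boundedness of the Euclidean Riesz transforms), $\nabla G(f)=G'(f)\nabla f$ pointwise, and Hölder closes the bound. In short, the corollary is soft once Theorem~\ref{T:main} is in hand: the convexity of the obstacle, the restriction $d\ge3$, and the sharp constraints on $s$ are all inherited from that theorem, and the only genuine work — the ``hard part'' here — is the bookkeeping just indicated, namely confirming that the exponent conditions pass to every Sobolev norm in play and accommodating the limited $C^1$ regularity of $G$.
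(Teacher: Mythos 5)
Your proposal is correct and follows exactly the route the paper intends: the corollary is presented there as an immediate consequence of Theorem~\ref{T:main} (applied at exponent $p$ to $G(f)$ and at exponent $p_2$ to $f$, the exponent check $p\leq p_2$ being just as you say) combined with the Euclidean fractional chain rule of \cite{ChW:fractional chain rule,Taylor:book}. The extra bookkeeping you supply --- the normalization $G(0)=0$, the mollification/density step to accommodate $G(f)\in C^1_c$, and the separate $s=1$ case via Riesz transforms --- is sound and simply fills in details the paper leaves implicit.
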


We prove Theorem~\ref{T:main} by estimating the difference between the Littlewood--Paley square functions associated to the two operators.
More precisely, we use Littlewood--Paley projections built from the heat semigroup rather than from a partition of unity adapted to
dyadic shells.  This leads us to estimate the difference between the two heat kernels, for which we employ the maximum principle.
As our obstacle is convex we are able to use the halfspace as a comparison domain, which simplifies this step.  (See Lemma~\ref{L:kernel}
for further details.)

The difference between the heat kernels $e^{t\Delta_{\R^d}}$ and $e^{t\Delta_\Omega}$ is concentrated near the obstacle.  Thus by
following the steps outlined above, we find that the difference between the two sides of \eqref{E:equiv norms} can be controlled by
an appropriately weighted $L^p$-norm of $f$.  More precisely, Proposition~\ref{P:diff} shows that
$$
\bigl\| (-\Delta_{\R^d})^{s/2} f \bigr\|_{L^p} + \biggl\|\frac{f(x)}{\dist(x,\Omega^c)^s}\biggr\|_{L^p}
    \sim \bigl\| (-\Delta_\Omega)^{s/2} f \bigr\|_{L^p} + \biggl\|\frac{f(x)}{\dist(x,\Omega^c)^s}\biggr\|_{L^p}
$$
for all $f\in C^\infty_c(\Omega)$. This relation holds for any $s> 0$ and any $1<p<\infty$.

From the preceding discussion, the proof of \eqref{E:equiv norms} reduces to proving Hardy-type inequalities.  This is done
in Section~\ref{SS:Hardy} (see Lemmas~\ref{L:Hardy Laplacian} and \ref{L:Hardy Dirichlet}) and is the source of the restriction $s<\min\{1+1/p, d/p\}$.

In the next section, we will discuss some consequences of Theorem~\ref{T:main} in the study of nonlinear Schr\"odinger equations.  We
devote the remainder of this section to describing some basic tools that enter into the proof of Theorem~\ref{T:main}.

From the maximum principle (or the Brownian motion interpretation) it is easy to see that the heat kernel associated to the Dirichlet Laplacian on $\Omega$
is both positive and majorized by the heat kernel for Euclidean space:
\begin{equation}\label{E:crude heat bound}
 0 \leq e^{t\Delta_\Omega}(x,y) \leq e^{t\Delta_{\R^d}}(x,y) \lesssim t^{-\frac d2} \exp\bigl( - |x-y|^2 /4t \bigr)
\end{equation}
for all $t>0$.  In Section~\ref{SS:multiplier} we will prove that when coupled with finite speed of propagation for the wave equation, this crude bound suffices to prove the analogue of the Mikhlin multiplier theorem for functions of $-\Delta_\Omega$; see Theorem~\ref{T:Mikhlin}.  The needed Littlewood--Paley square function estimates then follow directly from the usual argument; see Theorem~\ref{T:sq}.

The paper \cite{DuongOuhabazSikora} gives a very general multiplier theorem for positive self-adjoint operators whose semigroups obey Gaussian bounds.
This subsumes Theorem~\ref{T:Mikhlin}.  For completeness, we have chosen to include a proof of Theorem~\ref{T:Mikhlin}; it also gives us the opportunity to exhibit in the present setting some beautiful arguments developed in the context of multiplier theorems on Lie groups.

Littlewood--Paley square function estimates for exterior domains were also developed in \cite{IvanPlanch:square} and then used in \cite{IvanPlanch:IHP} to prove multilinear estimates in Besov spaces.  A second ingredient in the proof of their multilinear estimates is a weaker variant of boundedness of Riesz transforms, namely,
\begin{equation}\label{E:HKR'}
\bigl\| \sqrt{t} \, \nabla e^{t\Delta_\Omega} f \bigr\|_{L^p(\Omega)} \lesssim \|  f \|_{L^p(\Omega)} \quad\text{uniformly for $t>0$.}
\end{equation}
In \cite[Proposition~1.7]{IvanPlanch:square} it is claimed that this holds for all $1<p<\infty$.  We dispute this.  In Section~\ref{S:HKR}, we prove that \eqref{E:HKR'}
holds for $p<d$, but fails for $p>d$. 

The main result of \cite{IvanPlanch:IHP} is global well-posedness of the energy-critical NLS in non-trapping domains for small data.  A second proof of this result was given in \cite{BSS}.  This second paper works only with (scaling) inhomogeneous spaces and avoids subtle multilinear estimates by proving a $L^4_tL^\infty_x$ Strichartz estimate, which allows those authors to estimate the nonlinearity in $L^1_t H^1_x$.

The proof of the Hardy inequalities for the Dirichlet Laplacian (cf. Lemma~\ref{L:Hardy Dirichlet}) requires finer information about the heat kernel than is given in \eqref{E:crude heat bound}, which is oblivious to the shape of $\Omega$.  Much effort has been devoted to the precise estimation of heat kernels in various Riemannian manifolds. For the particular case of the exterior of a convex obstacle, the exact order of magnitude is a consequence of a more general result of Q.~S.~Zhang \cite{qizhang}; see also \cite{GrigoSaloff}.

\begin{theorem}[Heat kernel estimate, \cite{qizhang}]\label{T:heat}
There exists $c>0$ such that
\begin{align*}
|e^{t\Delta_\Omega}(x,y)| \lesssim \Bigl[\frac{\dist(x,\partial\Omega)}{\sqrt t \wedge \diam(\Omega^c)} \wedge 1\Bigr]
    \Bigl[\frac{\dist(y,\partial\Omega)}{\sqrt t\wedge \diam(\Omega^c)}\wedge 1\Bigr]
    t^{-\frac d 2} \exp\bigl( -c |x-y|^2 /t \bigr)
\end{align*}
uniformly for $x, y\in \Omega$ and $t>0$; recall that $A\wedge B = \min\{A,B\}$.  Moreover, the reverse inequality holds after suitable modification
of $c$ and the implicit constant.
\end{theorem}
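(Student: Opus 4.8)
The plan is to establish the upper and lower bounds separately; convexity enters in both, through the uniform interior ball condition for $\Omega$ and the fact that near any $z\in\partial\Omega$ the obstacle lies on one side of the tangent hyperplane $P_z$ at $z$. (As throughout the paper, implicit constants may depend on $\Omega$.)

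\textbf{Upper bound.} First I would reduce to a one-sided statement carrying the boundary factor in a single argument: using the reproducing formula $e^{t\Delta_\Omega}(x,y)=\int_\Omega e^{\frac t2\Delta_\Omega}(x,w)\,e^{\frac t2\Delta_\Omega}(w,y)\,dw$, the symmetry of the kernel, and the Chapman--Kolmogorov identity for Gaussians, it suffices to prove
$$
0\le e^{t\Delta_\Omega}(x,y)\lesssim\Bigl[\tfrac{\dist(x,\partial\Omega)}{\sqrt t\wedge\diam(\Omega^c)}\wedge 1\Bigr]\,t^{-\frac d2}\exp\bigl(-c|x-y|^2/t\bigr).
$$
The regime $\dist(x,\partial\Omega)\ge\sqrt t\wedge\diam(\Omega^c)$ is already \eqref{E:crude heat bound}, so I only need the factor $\dist(x,\partial\Omega)/\sqrt t$ when $\delta:=\dist(x,\partial\Omega)$ is small. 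Let $z\in\partial\Omega$ be nearest to $x$, so $x=z+\delta\nu$ with $\nu$ the outward unit normal; smoothness and convexity give an interior ball $B\subseteq\Omega^c$ of radius $R_0\gtrsim 1$ tangent to $\partial\Omega$ at $z$, whence $\Omega\subseteq\R^d\setminus B$. For $\sqrt t\lesssim R_0$ it thus suffices to treat the exterior of a single ball, which I would do with the parabolic maximum principle on a cylinder over $(\R^d\setminus B)\cap B(z,C\sqrt t)$: there $\R^d\setminus B$ differs from the half-space above the tangent hyperplane only by $O(t/R_0)$, so one compares $e^{s\Delta_{\R^d\setminus B}}(\cdot,y)$ with a supersolution modeled on the explicit half-space heat kernel $(4\pi s)^{-d/2}\bigl[e^{-|\xi-y|^2/4s}-e^{-|\bar\xi-y|^2/4s}\bigr]$ (with $\xi\mapsto\bar\xi$ the reflection across the appropriate shift of $P_z$), which already displays the factor $\min\{\dist(\xi,P_z)/\sqrt s,1\}$, plus a term dominating $e^{s\Delta_{\R^d\setminus B}}$ on $\partial B(z,C\sqrt t)$ via \eqref{E:crude heat bound} and finite speed of propagation. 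For $\sqrt t\gtrsim\diam(\Omega^c)$ the boundary factor is frozen at $\dist(x,\partial\Omega)/\diam(\Omega^c)$; here I would write $e^{t\Delta_\Omega}=e^{(t-s_0)\Delta_\Omega}e^{s_0\Delta_\Omega}$ with $s_0\sim\diam(\Omega^c)^2$, apply the finite-scale bound just obtained to $e^{s_0\Delta_\Omega}$, bound $e^{(t-s_0)\Delta_\Omega}$ by \eqref{E:crude heat bound}, and note that convolving a Gaussian at scale $\sqrt{s_0}$ with one at the much larger scale $\sqrt t$ reproduces $t^{-d/2}e^{-c|x-y|^2/t}$.

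\textbf{Lower bound.} Here the boundary factors must be built by hand. The engine is a near-diagonal bound: if $\dist(x,\partial\Omega)\ge\sqrt t$ then $B(x,\sqrt t)\subseteq\Omega$, so domain monotonicity and the classical lower bound for a ball give $e^{t\Delta_\Omega}(x,y)\gtrsim t^{-d/2}$ whenever $|x-y|\le\tfrac12\sqrt t$. Iterating this along a chain of $O\bigl((|x-y|^2+\diam(\Omega^c)^2)/t\bigr)$ overlapping balls joining $x$ to $y$ — a chain which, by convexity, can be kept at distance $\gtrsim\sqrt t$ from the obstacle, detouring around it when the segment $[x,y]$ is blocked — together with Chapman--Kolmogorov yields $e^{t\Delta_\Omega}(x,y)\gtrsim t^{-d/2}\exp(-c|x-y|^2/t)$ whenever both points lie at distance $\gtrsim\sqrt t$ from $\partial\Omega$. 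To insert the factor at a point $x=z+\delta\nu$ with $\delta$ small, I would use that $\Omega$ \emph{contains} the half-space $\HH_z=\{(\cdot-z)\cdot\nu>0\}$ bounded by the tangent hyperplane (this is exactly where convexity is used): from $e^{t\Delta_\Omega}(x,y)\ge\int_{\HH_z}e^{\frac t2\Delta_{\HH_z}}(x,w)\,e^{\frac t2\Delta_\Omega}(w,y)\,dw$, restricting to $w$ with $\dist(w,P_z)\sim\sqrt t$, the first factor is $\gtrsim\frac{\delta}{\sqrt t}\,t^{-d/2}e^{-c|x-w|^2/t}$ by the explicit half-space kernel and the second is $\gtrsim t^{-d/2}e^{-c|w-y|^2/t}$ by the near-diagonal bound; integrating yields the $\delta/\sqrt t$ gain, and repeating at $y$ produces both factors.

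\textbf{Main obstacle.} The crux is the boundary decay in the upper bound, specifically fusing the two pictures ``$\Omega$ looks like a half-space below the curvature scale'' and ``$\Omega^c$ looks like a point above scale $\diam(\Omega^c)$'' into one estimate valid for all $t$; this is where the parabolic barrier must be chosen with care, and it is precisely the content one may instead import wholesale from \cite{qizhang} or \cite{GrigoSaloff} (the upshot being that the equilibrium potential of a smooth convex obstacle is comparable to $\dist(\cdot,\partial\Omega)\wedge\diam(\Omega^c)$). On the lower-bound side the only delicate point is the geometry of the chaining path — that it can be kept uniformly away from the convex obstacle — which again is where convexity does the work.
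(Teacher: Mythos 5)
First, the relevant point of comparison: the paper does not prove Theorem~\ref{T:heat} at all. It is quoted from Q.~S.~Zhang \cite{qizhang} (with \cite{GrigoSaloff} cited as a related source), so there is no in-paper argument to measure your sketch against; your closing suggestion to ``import wholesale'' the hard step is, in effect, exactly what the authors do for the whole theorem.

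Judged as a free-standing proof, your outline has the right architecture (Chapman--Kolmogorov factorization into one-sided boundary factors times a Gaussian; tangent half-space comparison furnished by convexity; chaining for the lower bound), but the load-bearing steps are not actually established. (i) The crux of the upper bound --- the factor $\dist(x,\partial\Omega)/\sqrt t$ --- is not obtained by the comparison you describe: after reducing to the exterior of an interior tangent ball $B$, the inclusion goes the wrong way for a direct half-space comparison, since $\R^d\setminus B$ \emph{contains} the half-space above the tangent plane, so the explicit half-space kernel is a lower barrier rather than an upper one; the proposed supersolution (shifted half-space kernel plus a lateral term controlled by \eqref{E:crude heat bound}) does not visibly preserve the multiplicative structure, because the additive lateral term of size $t^{-d/2}e^{-cC^2}$ does not vanish at the obstacle and the curvature shift contributes an additive, not multiplicative, error of order $\sqrt t/R_0$. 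This survival-probability-type decay is precisely what \cite{qizhang,GrigoSaloff} prove, and your sketch defers to them here, so the key estimate is assumed rather than proved. (Also, ``finite speed of propagation'' is not available for the heat semigroup; only the Gaussian tail of \eqref{E:crude heat bound} is.) (ii) In the lower bound, a chain of $O\bigl((|x-y|^2+\diam(\Omega^c)^2)/t\bigr)$ balls produces an inadmissible extra factor $e^{-c\,\diam(\Omega^c)^2/t}$ when $t\ll\diam(\Omega^c)^2$; what is needed (and what convexity plus smoothness does give, with an $\Omega$-dependent constant) is that the exterior geodesic distance satisfies $d_\Omega(x,y)\lesssim_\Omega|x-y|$, so the chain has $O(1+|x-y|^2/t)$ balls after enlarging $c$. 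More seriously, your construction yields the boundary factor $\dist(x,\partial\Omega)/\sqrt t$ for all times, whereas the reverse inequality in Theorem~\ref{T:heat} requires the saturated factor $\dist(x,\partial\Omega)/\diam(\Omega^c)$ once $\sqrt t\gtrsim\diam(\Omega^c)$; this large-time saturation, reflecting transience in $d\geq3$, is the main content of \cite{qizhang} and is not reached by the half-space insertion alone. In short: a reasonable roadmap, but both the boundary decay in the upper bound and the large-time lower bound remain imported from the references --- which is consistent with how the paper itself treats this theorem.
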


Integrating these bounds in time with power-like weights, we obtain upper bounds on the kernels of $(-\Delta_\Omega)^{-s/2}$; see
Lemma~\ref{L:Riesz}.  These bounds are then used to prove the Hardy inequality for $-\Delta_\Omega$ in Lemma~\ref{L:Hardy Dirichlet}.
All the ingredients are finally brought together in Section~\ref{SS:equiv} to complete the proof of Theorem~\ref{T:main}.

As $\R^2$ is parabolic (the Green function is not sign definite), the correct order of magnitude for the heat kernel in planar exterior domains has a different structure to that given in Theorem~\ref{T:heat}.  Although we know of no reference that gives the full analogue of Theorem~\ref{T:heat} for $d=2$, the paper \cite{GrigoSaloff} does give the answer in the case of the exterior of the unit disk in $\R^2$; see Example~1.3 of that paper.  We do not know of any obstruction to adapting the methodology we use in this paper to the planar case.  Nevertheless, equivalence of Sobolev spaces in two dimensional exterior domains remains an interesting problem.

\subsection*{Acknowledgements}
We would like to thank Andrew Hassell for bringing several references to our attention.  R. K. was supported by NSF grant DMS-1001531.  M. V. was supported by the Sloan Foundation and NSF grants DMS-0901166 and DMS-1161396.  X. Z. was supported by the Sloan Foundation.

\section{Applications to NLS}\label{SS:NLS}

We were led to the subject of this paper by our investigations of the nonlinear Schr\"odinger equation in the domain $\Omega$ with Dirichlet boundary conditions, which one may write naively as follows:
\begin{equation}\label{E:'D'nls}
i \partial_t u = -\Delta u \pm |u|^p u \qtq{with} u(t=0,x)=u_0(x) \qtq{and} u(t)\bigr|_{\partial\Omega} \equiv 0.
\end{equation}
A more precise formulation (cf. Duhamel's principle) is to write the corresponding integral equation:
\begin{equation}\label{E:duhamel}
u(t) = e^{it\Delta_\Omega} u_0  \mp i \int_0^t e^{i(t-s)\Delta_\Omega} |u(s)|^p u(s)\, ds,
\end{equation}
where $\Delta_\Omega$ denotes the Dirichlet Laplacian.  (This gives the correct interpretation of the boundary condition, even for
solutions without meaningful restrictions to $\partial\Omega$.)  Together with reasonable assumptions on $u$, this leads directly to the notion of strong solutions; see Definition~\ref{D:solution} for a concrete example.

Local existence and uniqueness of strong solutions to nonlinear Schr\"odinger equations are usually proved by applying the contraction mapping method to this integral
equation.  A key ingredient in doing this is the family of estimates for the propagator $e^{it\Delta_\Omega}$ known as Strichartz estimates.  These estimates have a long
history and have been the subject of intensive investigation in Euclidean space, on manifolds, and in domains.  For the case of exterior domains discussed in this paper,
they were proved by Ivanovici \cite{Ivanovici:Strichartz}; see also \cite{BSS,IvanPlanch:IHP}.

\begin{theorem}[Strichartz estimates, \cite{Ivanovici:Strichartz}]\label{T:Strichartz}
Let $d\geq 2$, let $\Omega\subset\R^d$ be the exterior of a smooth compact strictly convex obstacle,
and let $q,\tilde q > 2$ and $2\leq r,\tilde r \leq \infty$ satisfy the scaling conditions
$$
\tfrac2{q} + \tfrac{d}{r} = \tfrac d2 = \tfrac2{\tilde q} + \tfrac{d}{\tilde r}.
$$
Then
$$
\biggl\| e^{it\Delta_\Omega} u_0 \mp i \int_0^t e^{i(t-s)\Delta_\Omega} F(s)\, ds\biggr\|_{L_t^qL_x^r(I\times\Omega)}
\lesssim \|u_0\|_{L^2(\Omega)} + \|F\|_{L_t^{\tilde q'}L_x^{\tilde r'}(I\times\Omega)},
$$
with the implicit constant independent of the time interval $I\ni0$.
\end{theorem}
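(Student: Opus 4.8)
The plan is to deduce the full family of estimates from a single dispersive bound for the propagator together with the abstract $TT^*$ machinery. First I would record the two "cost-free" inputs. Energy conservation, $\|e^{it\Delta_\Omega}u_0\|_{L^2(\Omega)}=\|u_0\|_{L^2(\Omega)}$, is immediate from the self-adjointness of $-\Delta_\Omega$. The remaining ingredient is the dispersive estimate
$$
\bigl\| e^{it\Delta_\Omega} \bigr\|_{L^1(\Omega)\to L^\infty(\Omega)} \lesssim |t|^{-d/2},
$$
possibly only for $|t|\lesssim 1$ at first. Granting this, the homogeneous estimate $\|e^{it\Delta_\Omega}u_0\|_{L^q_tL^r_x}\lesssim\|u_0\|_{L^2}$ for all admissible $(q,r)$ with $q>2$ follows from the Keel--Tao argument, and the inhomogeneous (retarded) estimate with $q,\tilde q>2$ then follows by duality together with the Christ--Kiselev lemma; the hypothesis $q,\tilde q>2$ is precisely what makes Christ--Kiselev applicable, so no genuine endpoint or bilinear retarded estimate is needed. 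Thus everything reduces to the dispersive estimate, and (if only the local-in-time version is obtained directly) to a mechanism for globalizing it in time.

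For the dispersive estimate I would localize in space. Fix $\chi\in C^\infty_c(\R^d)$ equal to $1$ on a neighborhood of the obstacle $\Omega^c$, and split $e^{it\Delta_\Omega}u_0 = \chi u + (1-\chi)u$ with $u=e^{it\Delta_\Omega}u_0$. Extending by zero across $\Omega^c$ (legitimate since $u$ and $\chi$ both vanish there), the piece $(1-\chi)u$ solves a Schr\"odinger equation on all of $\R^d$ with forcing $[\Delta,\chi]u$ supported in the compact collar $\{\nabla\chi\neq0\}\subset\Omega$; hence by the Euclidean Strichartz and dispersive estimates it suffices to control the forcing in a dual Strichartz norm on that collar. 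Here local smoothing enters: since $\Omega^c$ is convex the exterior is nontrapping, so one has the uniform-in-time local energy estimate gaining half a derivative for $e^{it\Delta_\Omega}u_0$ on compact subsets of $\Omega$; combined with the elliptic gain from the commutator this closes the "near infinity" contribution, and the same nontrapping local energy decay is what upgrades a local-in-time dispersive bound to a global one.

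The genuinely hard part is $\chi u$, supported in a fixed neighborhood of the strictly convex boundary. Here I would construct a parametrix for $e^{it\Delta_\Omega}$ microlocally near $\partial\Omega$, splitting the cotangent bundle into the hyperbolic (transversally reflected) region, where the Melrose--Taylor geometric-optics construction produces a finite sum of reflected wave packets obeying the same $|t|^{-d/2}$ bound as in $\R^d$, and the glancing/diffractive region near $S^*\partial\Omega$, where one passes to Friedlander's Airy-type model for a convex obstacle and estimates the resulting oscillatory integrals by stationary phase with Airy degeneracies. Strict convexity is essential at exactly this point: it forbids gliding rays from persisting along the boundary and supplies the curvature that powers the Airy estimates. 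After obtaining a frequency-localized dispersive bound near the boundary one reassembles the dyadic pieces using the Littlewood--Paley square function estimates of Theorem~\ref{T:sq}. I expect the diffractive/glancing estimate to be the main obstacle: it is the step that genuinely requires strict convexity and the delicate oscillatory-integral analysis of Ivanovici (and of Ivanovici--Lebeau--Planchon), whereas the spatial gluing, the $TT^*$/Christ--Kiselev reduction, and the Littlewood--Paley summation are comparatively routine given the tools already developed in this paper.
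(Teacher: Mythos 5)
There is a genuine gap, and it is in the very first reduction. This theorem is not proved in the paper at all: it is imported from Ivanovici \cite{Ivanovici:Strichartz}, so the relevant comparison is with her argument. Your plan is to ``reduce everything to the dispersive estimate'' $\| e^{it\Delta_\Omega}\|_{L^1\to L^\infty}\lesssim |t|^{-d/2}$ and then run Keel--Tao plus Christ--Kiselev. But this paper states explicitly (Section~\ref{SS:NLS}) that it is currently \emph{unknown} whether the dispersive estimate holds outside a smooth convex obstacle --- it is not known even in the exterior of a ball except for radial data \cite{LSZ}. So the step you label as the input to the $TT^*$ machinery is an open problem, and your third paragraph, which asserts that the Melrose--Taylor/Friedlander--Airy parametrix near the glancing region ``obeys the same $|t|^{-d/2}$ bound as in $\R^d$,'' is precisely the claim nobody has been able to establish: the diffractive regime is exactly where pointwise $L^1\to L^\infty$ decay resists all known oscillatory-integral estimates. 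A proof sketch whose crucial lemma is an unsolved conjecture is not a proof.

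Ivanovici's actual route avoids the dispersive estimate. The outer decomposition you describe (cutoff $\chi$ near the obstacle, Euclidean Strichartz plus the commutator term, controlled by the nontrapping local smoothing estimate with its half-derivative gain) is indeed part of her argument; but for the near-boundary piece she does not prove pointwise decay. Instead she uses the Melrose--Taylor parametrix to prove \emph{spacetime-averaged} (Strichartz-type) bounds for frequency- and time-localized pieces directly, exploiting strict convexity in the Airy analysis only at the level of $L^q_tL^r_x$ norms, and then sums over frequencies. So the correct statement of the state of the art is: Strichartz without endpoint is a theorem, the dispersive estimate is not, and the former does not follow from the latter in this setting because the latter is unavailable. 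If you want a self-contained writeup, you must either reproduce the parametrix-based Strichartz argument of \cite{Ivanovici:Strichartz} (or the variants in \cite{BSS,IvanPlanch:IHP}), or simply cite it as the paper does; the Keel--Tao/Christ--Kiselev scaffolding in your first paragraph is fine, but it is resting on a foundation that does not exist.
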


These inequalities also hold for the Schr\"odinger equation in $\R^d$.  Indeed, in that setting, there is a very simple proof based on the explicit formula for the propagator and the Hardy--Littlewood--Sobolev inequality (in the time variable).  More precisely, the proof uses the dispersive estimate
$$
\bigl\| e^{it\Delta_{\R^d}} f \bigr\|_{L^\infty(\R^d)} \lesssim |t|^{-\frac{d}2} \bigl\| f \bigr\|_{L^1(\R^d)} \quad\text{for all $t\neq 0$}
$$
and the conservation of $L^2(\R^d)$, but no other information about the propagator.  In \cite{KeelTao}, it has been shown that one may
allow $q=2$ and/or $q'=2$ in the Euclidean setting when $d\geq 3$.  The argument is  significantly more complicated, but only uses the same information about the propagator: the dispersive estimate and conservation of $L^2$.

It is currently unknown whether the dispersive estimate holds outside a smooth convex obstacle.  In fact, it is not known to hold even in the exterior of a sphere, except in the class of radial functions, in which case it is a theorem of Li, Smith, and Zhang, \cite{LSZ}.  Geometric optics suggests very strongly that the dispersive estimate should be true.  At the same time, one may use parametrix methods inspired by geometric optics to see that the dispersive estimate must fail outside typical non-convex obstacles --- imagine light refocusing off a concave mirror.

The dispersive estimate plays a key role in the existing analyses of the long-time behaviour of solutions to the nonlinear Schr\"odinger equation in the mass-critical, energy-critical, and energy-supercritical regimes in the Euclidean setting.  Consider for example the global well-posedness problem for the energy-critical equation in three dimensions, for which the nonlinearity is quintic (i.e., $p=4$).  For this equation, large data global well-posedness has been shown to hold in the defocusing case; in the focusing case, global well-posedness is known for spherically symmetric solutions up to the soliton threshold.  In fact, not only are the solutions global in time, but they have finite $L^{10}_{t,x}(\R\times\R^3)$ spacetime norm, which suffices to guarantee scattering.   All proofs of these facts (see \cite{borg:scatter,CKSTT:gwp,KenigMerle:H1,KV:gopher}) rely on variants of the induction on energy procedure.  This procedure allows one to show that failure of global well-posedness and scattering guarantees the existence of special solutions that are well-localized in both position and momentum (Fourier) variables at each time. The proof of spatial localization uses the dispersive estimate in an essential way.

In \cite{KVZ:convex} we prove global well-posedness and scattering for the defocusing energy-critical NLS in the exterior of a smooth compact strictly convex obstacle in $\R^3$ for all initial data in the energy space.  This relies heavily on the results of this paper.  As an example of how Theorem~\ref{T:main} and its consequences enter into the analysis of dispersive equations, we will discuss the local and stability theories for the energy-critical NLS in exterior domains.
The initial value problem for this equation takes the following form:
\begin{equation}\label{E:Ecrit nls}
i \partial_t u = -\Delta_\Omega u \pm |u|^{\frac{4}{d-2}} u \qtq{with} u(0,x)=u_0(x)\in \dot H^1_D(\Omega).
\end{equation}

There are two obstructions to a verbatim repetition of the well-known arguments in the Euclidean setting: the gradient operator $\nabla$ does not commute with the propagator $e^{it\Delta_\Omega}$, while the operator $\sqrt{-\Delta_\Omega}$ does not obey the product/chain rules of regular calculus.  Theorem~\ref{T:main} provides the perfect tool to circumnavigate these issues: it allows us to freely pass from one operator to the other (provided $p<d$).

We start by making the notion of a solution precise:

\begin{definition}\label{D:solution}
Let $I$ be an open time interval containing the origin.  A function $u: I \times \Omega \to \C$ is a (strong) \emph{solution}
to \eqref{E:Ecrit nls} if it lies in the class $C^0_t \dot H^1_D(I\times \Omega) \cap L_t^{2(d+2)/(d-1)}L_x^{2d(d+2)/(d^2-2d-2)}(I\times\Omega)$ and satisfies
\begin{equation}\label{E:Ecrit duhamel}
u(t) = e^{it\Delta_\Omega} u_0  \mp i \int_0^t e^{i(t-s)\Delta_\Omega} |u(s)|^{\frac{4}{d-2}} u(s)\, ds,
\end{equation}
for all $t \in I$.
\end{definition}

\begin{remark}
There is considerable flexibility in the choice of spacetime norm in the definition above.  All can be shown to be equivalent \emph{a posteriori} by an application of the Strichartz inequality.
\end{remark}

We now show a strong form of local well-posedness.  We run a contraction mapping argument using Theorem~\ref{T:Strichartz}.  Of course, we have to choose our Strichartz spaces carefully to ensure that we can exploit Theorem~\ref{T:main}.  For the case of three dimensions, local well-posedness was obtained previously in \cite{BSS} and in \cite{IvanPlanch:IHP}.  In \cite{BSS} the authors prove an $L^4_tL^\infty_x$ Strichartz estimate, which allows them to rely only on the equivalence $H^1_D(\Omega)=H^1_0(\Omega)$.  In \cite{IvanPlanch:IHP} the authors prove multilinear estimates in Besov spaces; their results apply also for non-trapping obstacles in $\R^3$.

\begin{theorem}\label{T:LWP} Fix $d\geq 3$ and $\Omega\subset\R^d$ the exterior of a smooth compact strictly convex obstacle.  There exists $\eta>0$ such that if $u_0\in H^1_D(\Omega)$ obeys
\begin{equation}\label{E:LWP hyp}
\bigl\| \sqrt{-\Delta_\Omega} \; e^{it\Delta_\Omega} u_0 \bigr\|_{L_t^{\!\frac{2(d+2)}{d-1}} L_x^{\!\frac{2d(d+2)}{d^2+2}} (I\times\Omega) } \leq \eta
\end{equation}
for some time interval $I\ni 0$, then there is a unique strong solution to \eqref{E:Ecrit nls} on the time interval $I$; moreover,
\begin{equation}\label{E:LWP conc}
\bigl\| \sqrt{-\Delta_\Omega} \; u \bigr\|_{L_t^{\!\frac{2(d+2)}{d-1}} L_x^{\!\frac{2d(d+2)}{d^2+2}} (I\times\Omega) } \lesssim \eta.
\end{equation}
\end{theorem}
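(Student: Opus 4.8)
The plan is to solve the integral equation~\eqref{E:Ecrit duhamel} by the contraction mapping principle applied to the Duhamel map
$$
\Phi(u) := e^{it\Delta_\Omega} u_0 \mp i \int_0^t e^{i(t-s)\Delta_\Omega} F(u(s))\,ds, \qquad F(z):=|z|^{\frac{4}{d-2}}z,
$$
in function spaces chosen so that Theorem~\ref{T:main} can be invoked to replace $\sqrt{-\Delta_\Omega}$ by $|\nabla|$ and thereby import the Euclidean Sobolev embedding together with the fractional chain rule (Corollary~\ref{C:chain rule}). Set $q:=\frac{2(d+2)}{d-1}$, $r:=\frac{2d(d+2)}{d^2+2}$, and $\rho:=\frac{2d(d+2)}{d^2-2d-2}$ (the second exponent in Definition~\ref{D:solution}). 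One checks that $(q,r)$ is admissible in the sense of Theorem~\ref{T:Strichartz}, that $\tfrac1\rho=\tfrac1r-\tfrac1d$, and crucially that $1<r<d$ for every $d\geq3$, so that Theorem~\ref{T:main} applies with $s=1$ and $p=r$. Let $X(I)$ be the space of functions on $I\times\Omega$ with $\|u\|_{X(I)}:=\|\sqrt{-\Delta_\Omega}\,u\|_{L_t^qL_x^r(I\times\Omega)}<\infty$, so that hypothesis~\eqref{E:LWP hyp} reads $\|e^{it\Delta_\Omega}u_0\|_{X(I)}\leq\eta$. Converting $\sqrt{-\Delta_\Omega}$ to $|\nabla|$ via Theorem~\ref{T:main}, using $\dot H^{1,r}(\R^d)\hookrightarrow L^\rho(\R^d)$, and restricting to $\Omega$ (first on $C^\infty_c(\Omega)$, then by density) gives $\|u\|_{L^q_tL^\rho_x}\lesssim\|u\|_{X(I)}$; in particular every $u\in X(I)$ lies in the solution class of Definition~\ref{D:solution}.

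The first substantive step is the nonlinear estimate. Define $\tilde r',\tilde q'$ by $\tfrac1{\tilde r'}=\tfrac{4}{(d-2)\rho}+\tfrac1r$ and $\tfrac1{\tilde q'}=\tfrac{d+2}{(d-2)q}$; a short computation shows that $(\tilde q,\tilde r)$ is again admissible for Theorem~\ref{T:Strichartz} (for $d=3$ it degenerates to the pair $(\infty,2)$, which is permitted). Since $F\in C^1(\C)$ with $|F'(z)|\lesssim|z|^{\frac4{d-2}}$, Corollary~\ref{C:chain rule} applied with $s=1$, $p_2=r$, $p_1=\tfrac{(d-2)\rho}{4}$ (so that $1<p_1<\infty$ and $1=s<\tfrac{d}{p_2}$) gives, pointwise in time and after the Sobolev embedding of the previous paragraph,
$$
\bigl\|\sqrt{-\Delta_\Omega}F(u)(t)\bigr\|_{L^{\tilde r'}_x}\lesssim\|u(t)\|_{L^\rho_x}^{\frac4{d-2}}\bigl\|\sqrt{-\Delta_\Omega}u(t)\bigr\|_{L^r_x}\lesssim\bigl\|\sqrt{-\Delta_\Omega}u(t)\bigr\|_{L^r_x}^{\frac{d+2}{d-2}};
$$
taking the $L^{\tilde q'}_t$ norm yields $\|\sqrt{-\Delta_\Omega}F(u)\|_{L^{\tilde q'}_tL^{\tilde r'}_x}\lesssim\|u\|_{X(I)}^{(d+2)/(d-2)}$. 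The companion difference estimate, which uses only the pointwise bound $|F(u)-F(v)|\lesssim(|u|^{\frac4{d-2}}+|v|^{\frac4{d-2}})|u-v|$ and Hölder, reads $\|F(u)-F(v)\|_{L^{\tilde q'}_tL^{\tilde r'}_x}\lesssim\bigl(\|u\|_{X(I)}^{\frac4{d-2}}+\|v\|_{X(I)}^{\frac4{d-2}}\bigr)\|u-v\|_{L^q_tL^r_x}$.

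Now the contraction closes. Since $\sqrt{-\Delta_\Omega}$ is a function of $-\Delta_\Omega$ it commutes with $e^{it\Delta_\Omega}$, so that $\sqrt{-\Delta_\Omega}\Phi(u)=e^{it\Delta_\Omega}\sqrt{-\Delta_\Omega}u_0\mp i\int_0^te^{i(t-s)\Delta_\Omega}\sqrt{-\Delta_\Omega}F(u(s))\,ds$; applying Theorem~\ref{T:Strichartz} to the Duhamel integral (with zero initial data), then the nonlinear estimate, and bounding the free term by hypothesis~\eqref{E:LWP hyp}, gives $\|\Phi(u)\|_{X(I)}\leq\eta+C\|u\|_{X(I)}^{(d+2)/(d-2)}$. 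Hence the ball $\mathcal B:=\{u:\|u\|_{X(I)}\leq2\eta\}$ is mapped into itself once $\eta$ is small enough (using $\tfrac{d+2}{d-2}>1$). Equip $\mathcal B$ with the weaker metric $d(u,v):=\|u-v\|_{L^q_tL^r_x(I\times\Omega)}$, under which it is a complete metric space (a routine weak-$*$ compactness argument) and which we must use since $F$ fails to be uniformly $C^1$ when $d\geq7$; applying Theorem~\ref{T:Strichartz} to $\Phi(u)-\Phi(v)=\mp i\int_0^te^{i(t-s)\Delta_\Omega}\bigl(F(u)-F(v)\bigr)\,ds$ together with the difference estimate gives $d(\Phi(u),\Phi(v))\lesssim\eta^{\frac4{d-2}}\,d(u,v)\leq\tfrac12\,d(u,v)$ for $\eta$ small. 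The contraction mapping principle then produces a unique fixed point $u\in\mathcal B$; re-inserting it into the bound above yields~\eqref{E:LWP conc}, and the Duhamel formula combined with the strong $L^2(\Omega)$-continuity of $e^{it\Delta_\Omega}$ and the dual-Strichartz control of the Duhamel integral shows $u\in C^0_t\dot H^1_D(I\times\Omega)$, so $u$ is a strong solution in the sense of Definition~\ref{D:solution}. Uniqueness within the full solution class follows by subdividing $I$: any strong solution has finite $L^q_tL^\rho_x$ norm on $I$, hence finite $X$-norm on every sufficiently short subinterval (a local persistence-of-regularity estimate of the type just run), where it must coincide with the fixed point; patching finitely many such subintervals completes the argument.

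The main obstacle is the bookkeeping behind the choice of exponents: one must simultaneously arrange that $(q,r)$ and $(\tilde q,\tilde r)$ are admissible for Theorem~\ref{T:Strichartz}, that all the Lebesgue exponents appearing stay below $d$ so that Theorem~\ref{T:main} lets us trade $\sqrt{-\Delta_\Omega}$ for $|\nabla|$, and that the spacetime Hölder exponents balance at precisely the scaling-critical powers $\tfrac4{d-2}$ and $\tfrac{d+2}{d-2}$. Once the correct quadruple of exponents has been pinned down, the rest is the standard energy-critical contraction, with the one familiar high-dimensional wrinkle that the non-smoothness of the nonlinearity forces the contraction metric to be weaker than the norm defining the ambient ball.
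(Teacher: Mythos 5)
Your argument is correct and is essentially the paper's own proof: a contraction for the Duhamel map based on Theorem~\ref{T:Strichartz}, Corollary~\ref{C:chain rule}, Euclidean Sobolev embedding, and Theorem~\ref{T:main} (applicable since $r=\frac{2d(d+2)}{d^2+2}<d$), run in the mass-level metric $\|u-v\|_{L_t^qL_x^r}$; your dual exponents $(\tilde q',\tilde r')$ are exactly the paper's $L_t^{\frac{2(d-2)}{d-1}}L_x^{\frac{2d(d-2)}{d^2-6}}$, and your a posteriori treatment of $C_t^0\dot H^1_D$ membership and of uniqueness is consistent with (indeed slightly more explicit than) the paper's. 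The one point to repair is the contraction space: on your ball $\mathcal B=\{u:\|u\|_{X(I)}\le 2\eta\}$, defined by the gradient-level bound alone, the distance $d(u,v)=\|u-v\|_{L_t^qL_x^r}$ need not be finite, since on an unbounded domain control of $\sqrt{-\Delta_\Omega}u$ in $L_t^qL_x^r$ yields $u\in L_t^qL_x^\rho$ but not $u\in L_t^qL_x^r$, so $(\mathcal B,d)$ is not literally a complete metric space; the paper sidesteps this by additionally imposing $\|u\|_{L_t^qL_x^r}\le 2C\|u_0\|_{L^2}$ and $\|u\|_{L_t^\infty H^1_D}\le 2\|u_0\|_{H^1_D}+C(2\eta)^{\frac{d+2}{d-2}}$ in the ball (this is where the hypothesis $u_0\in H^1_D$ rather than merely $\dot H^1_D$ is used), and your estimates close these extra bounds verbatim, so the fix costs nothing.
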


\begin{remarks}
1.  We do not use the symmetric Strichartz norm (with exponent $\frac{2(d+2)}{d}$) in this theorem because the counterexamples in \cite{HassellSikora, LSZ} show
that equivalence of norms \emph{fails} in this space when $d=3$.  (See also Section~\ref{SS:counter}.)

2. If $u_0$ has small $\dot H^1_D(\Omega)$ norm, then Theorem~\ref{T:Strichartz} guarantees that \eqref{E:LWP hyp} holds with $I=\R$.  Thus global well-posedness for small data is a corollary of this theorem.

3. For large initial data $u_0$, the existence of some small open interval $I\ni 0$ for which  \eqref{E:LWP hyp} holds follows from combining the monotone convergence theorem with Theorem~\ref{T:Strichartz}.  In this way, we obtain local well-posedness for all data in $H^1_D(\Omega)$.

4. The argument below holds equally well for initial data prescribed as $t\to\pm\infty$, thus proving the existence of wave operators.
\end{remarks}

\begin{proof} Consider the map $\Phi:u \mapsto \text{RHS\eqref{E:Ecrit duhamel}}$.   We will show this is a contraction on the ball
\begin{align*}
B:=\Bigl\{& u\in L_t^\infty H^{1}_D\cap L_t^{\frac{2(d+2)}{d-1}}H^{1,\frac{2d(d+2)}{d^2+2}}_D(I\times\Omega): \,
\|u\|_{L_t^\infty H^1_D} \leq 2 \|u_0\|_{H^1_D} + C(2\eta)^{\frac{d+2}{d-2}},\\
&\bigl\| \sqrt{-\Delta_\Omega} \; u \bigr\|_{L_t^{\frac{2(d+2)}{d-1}} L_x^{\frac{2d(d+2)}{d^2+2}}} \leq 2\eta, \qtq{and}
\bigl\| u \bigr\|_{L_t^{\frac{2(d+2)}{d-1}} L_x^{\frac{2d(d+2)}{d^2+2}}} \leq 2C\|u_0\|_{L_x^2}\Bigr\}
\end{align*}
under the metric given by
$$
d(u,v):= \| u -v \|_{L_t^{\frac{2(d+2)}{d-1}} L_x^{\frac{2d(d+2)}{d^2+2}}(I\times\Omega)}.
$$
The constant $C$ depends only on the dimension and the domain $\Omega$, and it reflects implicit constants in the Strichartz and Sobolev embedding inequalities, as well as those in Theorem~\ref{T:main} and Corollary~\ref{C:chain rule}.

Throughout the proof, all spacetime norms will be on $I\times\Omega$.  To see that $\Phi$ maps the ball $B$ to itself, we use the Strichartz inequality followed by Corollary~\ref{C:chain rule}, \eqref{E:LWP hyp}, Sobolev embedding (in the whole of $\R^d$), and then again Theorem~\ref{T:main}:
\begin{align*}
\bigl\|  &\sqrt{-\Delta_\Omega} \; \Phi (u) \bigr\|_{L_t^{\frac{2(d+2)}{d-1}} L_x^{\frac{2d(d+2)}{d^2+2}}}\\
&\leq \bigl\| \sqrt{-\Delta_\Omega} \; e^{it\Delta_\Omega} u_0 \bigr\|_{L_t^{\frac{2(d+2)}{d-1}} L_x^{\frac{2d(d+2)}{d^2+2}}}
+ C\bigl\| \sqrt{-\Delta_\Omega}\; (|u|^{\frac{4}{d-2}}u) \bigr\|_{L_t^{\frac{2(d-2)}{d-1}} L_x^{\frac{2d(d-2)}{d^2-6}}}\\
&\leq \eta + C\bigl\|  \sqrt{-\Delta_\Omega} \; u \bigr\|_{L_t^{\frac{2(d+2)}{d-1}} L_x^{\frac{2d(d+2)}{d^2+2}}}
	\|u\|_{L_t^{\frac{2(d+2)}{d-1}} L_x^{\frac{2d(d+2)}{d^2-2d-2}}}^{\frac4{d-2}}\\
&\leq \eta + C\bigl\| \sqrt{-\Delta_\Omega} \; u \bigr\|_{L_t^{\frac{2(d+2)}{d-1}} L_x^{\frac{2d(d+2)}{d^2+2}}} \bigl\| \nabla u \bigr\|_{L_t^{\frac{2(d+2)}{d-1}} L_x^{\frac{2d(d+2)}{d^2+2}}}^{\frac4{d-2}}\\
&\leq \eta + C \bigl\| \sqrt{-\Delta_\Omega}\;u\bigr\|_{L_t^{\frac{2(d+2)}{d-1}} L_x^{\frac{2d(d+2)}{d^2+2}}}^{\frac{d+2}{d-2}}\\
&\leq \eta + C(2\eta)^{\frac{d+2}{d-2}}\\
&\leq 2\eta,
\end{align*}
provided $\eta$ is chosen sufficiently small.

Similar estimates give
\begin{align*}
\bigl\| \Phi(u) \bigr\|_{L_t^{\frac{2(d+2)}{d-1}} L_x^{\frac{2d(d+2)}{d^2+2}}}
&\leq C \|u_0\|_{L_x^2} + C\bigl\||u|^{\frac{4}{d-2}}u \bigr\|_{L_t^{\frac{2(d-2)}{d-1}} L_x^{\frac{2d(d-2)}{d^2-6}}}\\
&\leq C \|u_0\|_{L_x^2} + C\| u\|_{L_t^{\frac{2(d+2)}{d-1}} L_x^{\frac{2d(d+2)}{d^2+2}}}
	\bigl\| \sqrt{-\Delta_\Omega}\;u\bigr\|_{L_t^{\frac{2(d+2)}{d-1}} L_x^{\frac{2d(d+2)}{d^2+2}}}^{\frac4{d-2}}\\
&\leq C \|u_0\|_{L_x^2} + C(2C\|u_0\|_{L_x^2})(2\eta)^{\frac4{d-2}}\\
&\leq 2C \|u_0\|_{L_x^2},
\end{align*}
and
\begin{align*}
\bigl\| \Phi(u) \bigr\|_{L_t^\infty H^1_D}
&\leq \|u_0\|_{H^1_D} + C\bigl\||u|^{\frac{4}{d-2}}u \bigr\|_{L_t^{\frac{2(d-2)}{d-1}} H_D^{1,\frac{2d(d-2)}{d^2-6}}}\\
&\leq  \|u_0\|_{H^1_D} + C\|u\|_{L_t^{\frac{2(d+2)}{d-1}} H_D^{1,\frac{2d(d+2)}{d^2+2}}}\bigl\| \sqrt{-\Delta_\Omega}\;u\bigr\|_{L_t^{\frac{2(d+2)}{d-1}} L_x^{\frac{2d(d+2)}{d^2+2}}}^{\frac4{d-2}}\\
&\leq \|u_0\|_{H^1_D} + C(2\eta + 2C\|u_0\|_{L_x^2})(2\eta)^{\frac4{d-2}}\\
&\leq 2 \|u_0\|_{H^1_D} + C(2\eta)^{\frac{d+2}{d-2}},
\end{align*}
provided $\eta$ is chosen small enough.

This shows that $\Phi$ maps the ball $B$ to itself.  Finally, to prove that $\Phi$ is a contraction, we argue as above:
\begin{align*}
d(\Phi(u),\Phi(v))&\leq C \bigl\| |u|^{\frac4{d-2}}u- |v|^{\frac4{d-2}}v\bigr\|_{L_t^{\frac{2(d-2)}{d-1}} L_x^{\frac{2d(d-2)}{d^2-6}}}\\
&\leq C d(u,v) \Bigl(\bigl\| \sqrt{-\Delta_\Omega}\;u\bigr\|_{L_t^{\frac{2(d+2)}{d-1}} L_x^{\frac{2d(d+2)}{d^2+2}}}^{\frac4{d-2}}+ \bigl\| \sqrt{-\Delta_\Omega}\;v\bigr\|_{L_t^{\frac{2(d+2)}{d-1}} L_x^{\frac{2d(d+2)}{d^2+2}}}^{\frac4{d-2}}\Bigr)\\
&\leq 2Cd(u,v)(2\eta)^{\frac4{d-2}}\\
&\leq \tfrac12 d(u,v),
\end{align*}
provided $\eta$ is chosen small enough.
 \end{proof}

Notice that in the preceding theorem, the initial data was taken to belong to the inhomogeneous space $H^1_D(\Omega)$.  The next result allows us to generalize the well-posedness result to initial data in the larger space $\dot H^1_D(\Omega)$ (which is the energy space), at least in spatial dimensions $3\leq d\leq 6$.  More importantly, it provides a key tool for the implementation of induction-on-energy/concentration-compactness in the paper
\cite{KVZ:convex}.

\begin{theorem}[Energy-critical stability result]\label{T:stab}
Fix $3\leq d\leq 6$ and $\Omega$ the exterior of a smooth compact strictly convex obstacle in $\R^d$.  Let $I$ a compact time interval and let $\tilde u$ be an approximate solution to \eqref{E:Ecrit nls} on $I\times \Omega$ in the sense that
$$
i\tilde u_t =-\Delta_\Omega \tilde u \pm |\tilde u|^{\frac4{d-2}}\tilde u + e
$$
for some function $e$.  Assume that
\begin{align*}
\|\tilde u\|_{L_t^\infty \dot H_D^1(I\times \Omega)}\le E \qtq{and} \|\tilde u\|_{L_{t,x}^{\frac{2(d+2)}{d-2}}(I\times \Omega)} \le L
\end{align*}
for some positive constants $E$ and $L$.  Let $t_0 \in I$ and let $u_0\in \dot H^1_D(\Omega)$ obey
\begin{align*}
\|u_0-\tilde u(t_0)\|_{\dot H_D^1(\Omega)}\le E'
\end{align*}
for some positive constant $E'$.  Assume also the smallness conditions
\begin{align*}
\bigl\|\sqrt{-\Delta_\Omega}\; e^{i(t-t_0)\Delta_\Omega}\bigl(u_0-\tilde u(t_0)\bigr)\bigr\|_{L_t^{\frac{2(d+2)}{d-2}}L_x^{\frac{2d(d+2)}{d^2+4}}(I\times \Omega)}
+\bigl\|\sqrt{-\Delta_\Omega}\; e\bigr\|_{N^0(I)}&\le\eps
\end{align*}
for some $0<\eps<\eps_1=\eps_1(E,E',L)$. Then, there exists a unique strong solution $u:I\times\Omega\mapsto \C$ to \eqref{E:Ecrit nls} with initial data $u_0$ at time $t=t_0$ satisfying
\begin{align*}
\|u-\tilde u\|_{L_{t,x}^{\frac{2(d+2)}{d-2}}(I\times \Omega)} &\leq C(E,E',L)\eps\\
\bigl\|\sqrt{-\Delta_\Omega}\;  (u-\tilde u)\bigr\|_{S^0(I\times\Omega)} &\leq C(E,E',L)\, E'\\
\bigl\|\sqrt{-\Delta_\Omega}\;  u\bigr\|_{S^0(I\times\Omega)} &\leq C(E,E',L).
\end{align*}
Here, $S^0$ denotes the intersection of any finite number of Strichartz spaces $L_t^q L_x^r$ with $(q,r)$ obeying the conditions of Theorem~\ref{T:Strichartz}, and $N^0$ denotes the sum of any finite number of dual Strichartz spaces $L_t^{q'} L_x^{r'}$.
\end{theorem}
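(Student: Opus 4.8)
The plan is to run the now-standard perturbative argument for energy-critical stability (as in \cite{CKSTT:gwp,KV:gopher}), replacing at the one crucial nonlinear step the Euclidean fractional calculus by its exterior-domain counterpart furnished by Theorem~\ref{T:main}, Corollary~\ref{C:product rule}, and Corollary~\ref{C:chain rule}. Write $F(z)=|z|^{\frac4{d-2}}z$ and $w=u-\tilde u$; subtracting the equation for $\tilde u$ from \eqref{E:Ecrit duhamel} gives
$$
w(t)=e^{i(t-t_0)\Delta_\Omega}w(t_0)\mp i\int_{t_0}^{t}e^{i(t-s)\Delta_\Omega}\bigl[F(\tilde u+w)-F(\tilde u)\mp e(s)\bigr]\,ds .
$$
Two facts will be used throughout. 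First, Theorem~\ref{T:main} with $s=1$ gives $\|\sqrt{-\Delta_\Omega}v\|_{L^r}\sim\|\nabla v\|_{L^r}$ for every exponent $r$ in play: when $3\le d\le6$ all the admissible $\dot H^1$-level Strichartz exponents satisfy $r<d$ (which is exactly why the $L^\infty_x$-type endpoint is avoided), so we may transfer the Euclidean product and chain rules to $\Omega$. Second, the Sobolev embedding $\dot H^{1,\frac{2d(d+2)}{d^2+4}}(\R^d)\hookrightarrow L^{\frac{2(d+2)}{d-2}}(\R^d)$, combined with the first fact, shows that any bound on $\sqrt{-\Delta_\Omega}w$ in $L_t^{\frac{2(d+2)}{d-2}}L_x^{\frac{2d(d+2)}{d^2+4}}$ controls $w$ in $L_{t,x}^{\frac{2(d+2)}{d-2}}$. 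As a preliminary, on any subinterval where $\|\tilde u\|_{L_{t,x}^{2(d+2)/(d-2)}}$ is small, Strichartz applied to the equation for $\tilde u$ itself gives $\|\sqrt{-\Delta_\Omega}\tilde u\|_{S^0}\lesssim E$.

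I would first establish the single small subinterval version: if additionally $\|\tilde u\|_{L_{t,x}^{2(d+2)/(d-2)}(I\times\Omega)}\le\delta$ for a small $\delta=\delta(d)$, then $u$ exists on $I$ and satisfies the asserted bounds. Apply $\sqrt{-\Delta_\Omega}$ to the Duhamel formula for $w$ and invoke Theorem~\ref{T:Strichartz}: the linear term contributes $\lesssim\|w(t_0)\|_{\dot H_D^1(\Omega)}\le E'$ in the full $S^0(I)$ norm and $\lesssim\eps$ in the distinguished norm $L_t^{2(d+2)/(d-2)}L_x^{2d(d+2)/(d^2+4)}$, the error contributes $\|\sqrt{-\Delta_\Omega}e\|_{N^0}\le\eps$, and the main term is $\|\sqrt{-\Delta_\Omega}(F(\tilde u+w)-F(\tilde u))\|_{N^0(I)}$. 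To estimate this, pass to $\nabla$ via Theorem~\ref{T:main}, use the pointwise bound
$$
\bigl|\nabla\bigl(F(\tilde u+w)-F(\tilde u)\bigr)\bigr|\lesssim|\nabla w|\,\bigl(|\tilde u|+|w|\bigr)^{\frac4{d-2}}+|\nabla\tilde u|\,\bigl(|\tilde u|+|w|\bigr)^{\frac4{d-2}-1}|w| ,
$$
legitimate precisely because $\tfrac4{d-2}\ge1$, i.e.\ $d\le6$, and then apply H\"older together with Sobolev embedding in $\R^d$, converting $\nabla$ back to $\sqrt{-\Delta_\Omega}$ and distributing the power $\tfrac4{d-2}$ of $\tilde u$ and $w$ onto the distinguished small norm. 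Writing $A$ for $\|\sqrt{-\Delta_\Omega}w\|_{S^0(I)}$ and $B$ for its distinguished-norm part, this yields $A\lesssim E'+(E+A)(\delta+B)^{4/(d-2)}$ and $B\lesssim\eps+(E+A)(\delta+B)^{4/(d-2)}$; a continuity/bootstrap argument (the a priori norms being finite because $u$ is first produced on short intervals by the local theory, cf.\ Theorem~\ref{T:LWP}) then gives $A\lesssim_{E} E'$ and $B\lesssim_{E,E'}\eps$, hence $\|u-\tilde u\|_{L_{t,x}^{2(d+2)/(d-2)}}\lesssim\eps$ and a bound on $\|w(t_1)\|_{\dot H_D^1(\Omega)}$ for every later $t_1\in I$.

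To remove the extra smallness hypothesis, partition $I$ into $J=J(E,L)$ consecutive subintervals $I_1,\dots,I_J$ on which $\|\tilde u\|_{L_{t,x}^{2(d+2)/(d-2)}(I_j\times\Omega)}\le\delta$, and apply the single-subinterval result inductively. The exit data $w$ at the left endpoint of $I_{j+1}$, and the distinguished-norm linear evolution of $w$ from that endpoint, are controlled by the quantities already bounded on $I_1\cup\dots\cup I_j$ (via the $N^0$-estimate of the nonlinear forcing there), so each step inflates the constants by a fixed factor and a bound of the shape $\|\sqrt{-\Delta_\Omega}(u-\tilde u)\|_{S^0(I_j)}\le C^{\,j}(E'+\eps)$ propagates; choosing $\eps_1=\eps_1(E,E',L)$ small enough that this accumulated $C^{J}$-type growth remains controlled produces the three asserted estimates with $C(E,E',L)$ of the same form.

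The main obstacle is the nonlinear difference estimate in the second step: it is there that one genuinely needs the exterior-domain fractional product and chain rules — hence Theorem~\ref{T:main}, and with it the requirement that every exponent stay below $d$ — and it is there that the hypothesis $d\le6$ is indispensable, since for $d\ge7$ the nonlinearity fails to be $C^1$, the displayed pointwise gradient bound breaks down, and one is forced into delicate fractional chain rules for merely H\"older-continuous functions which are not known to transfer to $\Omega$. Everything else — checking exponent admissibility and tracking the dependence of $\eps_1$ and $C$ on $E,E',L$ through the iteration — is routine.
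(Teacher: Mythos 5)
Your overall plan --- a short-interval perturbation lemma proved via Strichartz, the pointwise bound on $\nabla\bigl(F(\tilde u+w)-F(\tilde u)\bigr)$, and Theorem~\ref{T:main}/Corollary~\ref{C:chain rule} to pass between $\nabla$ and $\sqrt{-\Delta_\Omega}$, followed by induction over subintervals on which $\tilde u$ is small in $L_{t,x}^{2(d+2)/(d-2)}$ --- is exactly the outline the paper intends (it omits the details, deferring to \cite{CKSTT:gwp,RV,TaoVisan} with the non-endpoint spaces of Theorem~\ref{T:LWP}). However, the quantitative heart of your argument, the displayed bootstrap system, does not prove the theorem. The inequality $B\lesssim\eps+(E+A)(\delta+B)^{4/(d-2)}$ contains the term $(E+A)\,\delta^{4/(d-2)}$, which carries no factor of $\eps$ and no factor of $B$; it therefore yields at best $B\lesssim\eps+(E+E')\delta^{4/(d-2)}$, not $B\lesssim_{E,E'}\eps$. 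This loses the first conclusion $\|u-\tilde u\|_{L_{t,x}^{2(d+2)/(d-2)}}\leq C\eps$, and it also destroys the induction: after $j$ subintervals the accumulated Duhamel forcing in $N^0$ is of size $j(E+E')\delta^{4/(d-2)}$, while the number of subintervals is $J\gtrsim (L/\delta)^{2(d+2)/(d-2)}$, so $J\delta^{4/(d-2)}\sim L^{2(d+2)/(d-2)}\delta^{-2d/(d-2)}\to\infty$ as $\delta\to0$; the distinguished-norm free evolution at later endpoints cannot be kept below the short-time threshold by any choice of $\delta=\delta(E,E',L)$. The correct bookkeeping (which your own pointwise bound supports) must keep at least one factor of $w$ or $\nabla w$, measured in the small norms, in \emph{every} term: $B\lesssim \eps + B(\delta+B)^{\theta}\,C(E,E') + E(\delta+B)^{\frac{4}{d-2}-1}B$ and the like, so that the coefficient of $B$ can be absorbed.

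Executing that correctly is precisely where the exterior-domain modification bites, and you pass over it. If you place $\nabla w$ in the distinguished space $L_t^{\frac{2(d+2)}{d-2}}L_x^{\frac{2d(d+2)}{d^2+4}}$ and all $\tfrac{4}{d-2}$ companion factors in $L_{t,x}^{\frac{2(d+2)}{d-2}}$ (the natural way to retain a full power of smallness, and what the Euclidean proofs do), the product lands in an $L_t^2$-based dual space, i.e.\ exactly the endpoint Strichartz pair that is \emph{not} available outside an obstacle --- this is the ``small modification'' the paper flags. One must instead split the companion factors between the small $L_{t,x}^{\frac{2(d+2)}{d-2}}$ norm and the bounded $L_t^{\frac{2(d+2)}{d-1}}$-family norms of Theorem~\ref{T:LWP}, keeping the dual time exponent strictly below $2$ while retaining a positive power of $\delta+B$; all spaces involved are $\dot H^1$-critical, so only this endpoint issue needs checking (and the relevant spatial exponents remain $<d$, as required for Theorem~\ref{T:main}). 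Two further points: for $d=6$ the difference term is just $|\nabla\tilde u|\,|w|$, with no extra power of $(\tilde u,w)$ to make its coefficient small, so the subdivision must be taken with respect to a Strichartz norm of $\nabla\tilde u$ (first shown finite and $\leq C(E,L,\eps)$ by a preliminary bootstrap), not merely $\|\tilde u\|_{L_{t,x}^{2(d+2)/(d-2)}}$; and your parenthetical claim that \emph{all} admissible exponents satisfy $r<d$ is false for $d=3,4$ (there $\tfrac{2d}{d-2}\geq d$) --- harmless, since the equivalence of Sobolev norms is only needed in the specific spaces where the chain rule is applied, but the stated justification should be corrected.
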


The proof of this theorem follows the general outline in \cite{CKSTT:gwp,RV,TaoVisan}.  Small modifications are needed because these papers use the endpoint Strichartz inequality, which is unknown in exterior domains.  The proof of Theorem~\ref{T:LWP} shows the spaces that can be used in their place; moreover, these are spaces to which Theorem~\ref{T:main} applies.  We omit the details.

The analogue of Theorem~\ref{T:stab} for dimensions $d\geq 7$ is known in the Euclidean setting; see \cite{ClayNotes,TaoVisan}.  However, the proof relies on fractional chain rules for H\"older continuous functions and exotic Strichartz estimates.  Theorem~\ref{T:main} guarantees that the fractional chain rule can be imported directly from the Euclidean setting.  The exotic Strichartz estimates however are derived from the dispersive estimate; it is not known whether they hold in exterior domains.

\section{The multiplier theorem}\label{SS:multiplier}

In this section we prove the following analogue of the Mikhlin multiplier theorem:

\begin{theorem}[Multiplier theorem]\label{T:Mikhlin}
Suppose $m:[0,\infty)\to\C$ obeys
\begin{align}\label{E:symbol condition}
 \bigl| \partial^k m(\lambda)|\lesssim  \lambda^{-k} \qtq{for all} 0\leq k\leq \lfloor\tfrac d2\rfloor +1.
\end{align}
Then $m(\sqrt{-\Delta_\Omega})$, which we define via the $L^2$ functional calculus, extends uniquely from $L^p(\Omega)\cap L^2(\Omega)$ to a bounded operator on $L^p(\Omega)$, for
all $1<p<\infty$.
\end{theorem}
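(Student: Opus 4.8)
The plan is to run Calderón--Zygmund theory on the space of homogeneous type $(\Omega,|\cdot|,dx)$ --- the measure is doubling since $|B(x,r)\cap\Omega|\sim_d r^d$ --- using the Gaussian heat bound \eqref{E:crude heat bound} and finite speed of propagation for the Dirichlet wave equation as the only inputs specific to $\Omega$. Writing $K(x,y)$ for the integral kernel of $m(\sqrt{-\Delta_\Omega})$ away from the diagonal, it suffices to establish (i) boundedness on $L^2(\Omega)$ and (ii) a Hörmander-type condition on $K$; together these give weak-type $(1,1)$, hence boundedness on $L^p$ for $1<p\le2$ by interpolation, and then on $L^p$ for $2\le p<\infty$ by duality, since $\overline m$ obeys the same hypothesis \eqref{E:symbol condition}. (The uniqueness of the extension is then automatic, as $L^2(\Omega)\cap L^p(\Omega)$ is dense in $L^p(\Omega)$.) Item (i) is immediate from the spectral theorem: $\|m(\sqrt{-\Delta_\Omega})\|_{L^2\to L^2}\le\|m\|_{L^\infty}$, which is the case $k=0$ of \eqref{E:symbol condition}.

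For (ii) I would first decompose $m$ in the spectral variable. Fix $\psi\in C_c^\infty(\tfrac12,2)$ with $\sum_{j\in\Z}\psi(2^{-j}\lambda)=1$ for $\lambda>0$ and set $m_j(\lambda)=m(\lambda)\psi(2^{-j}\lambda)$, so $m=\sum_j m_j$. Rescaling, $g_j(\lambda):=m_j(2^j\lambda)$ is supported in $[\tfrac12,2]$ and, by \eqref{E:symbol condition}, satisfies $\|g_j^{(k)}\|_{L^\infty}\lesssim1$ for $0\le k\le\lfloor\tfrac d2\rfloor+1$; hence $\|g_j\|_{W^{s,2}(\R)}\lesssim1$ uniformly in $j$ for any fixed $s$ with $\tfrac d2<s<\lfloor\tfrac d2\rfloor+1$. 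This is the only place \eqref{E:symbol condition} enters, and is what dictates the number $\lfloor d/2\rfloor+1$. The core estimate is the weighted Plancherel-type bound on the kernel $K_j$ of $m_j(\sqrt{-\Delta_\Omega})$:
\[
\int_\Omega\bigl|K_j(x,y)\bigr|^2\bigl(1+2^j|x-y|\bigr)^{2s}\,dx\ \lesssim\ 2^{jd},\qquad\text{uniformly in }j\in\Z\text{ and }y\in\Omega.
\]

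To prove this I would use the wave representation $m_j(\sqrt{-\Delta_\Omega})=\tfrac1{2\pi}\int_\R\widehat{m_j}(t)\cos(t\sqrt{-\Delta_\Omega})\,dt$ and split the $t$-integral smoothly at a scale $2^{-j}R$. Finite speed of propagation for $\partial_t^2u=\Delta_\Omega u$ (unit speed, by an energy estimate) forces the near part $\Phi_{j,R}$ to have kernel supported in $\{|x-y|\lesssim2^{-j}R\}$, while the far part has operator norm at most $\int_{|t|>2^{-j}R}|\widehat{m_j}(t)|\,dt\lesssim R^{1/2-s}\|g_j\|_{W^{s,2}}$ by Cauchy--Schwarz. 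The heat bound \eqref{E:crude heat bound} gives $\|e^{t\Delta_\Omega}\|_{L^2\to L^\infty}\lesssim t^{-d/4}$; factoring $m_j(\sqrt{-\Delta_\Omega})=e^{2^{-2j}\Delta_\Omega}\,\widetilde m_j(\sqrt{-\Delta_\Omega})$ with $\widetilde m_j(\lambda)=e^{2^{-2j}\lambda^2}m_j(\lambda)$, $\|\widetilde m_j\|_{L^\infty}\lesssim1$, and doing the same for $\Phi_{j,R}$, one gets $\|m_j(\sqrt{-\Delta_\Omega})\|_{L^2\to L^\infty}+\|\Phi_{j,R}\|_{L^2\to L^\infty}\lesssim2^{jd/2}$. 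Balancing the two pieces by choosing $R\sim2^\ell$ on the dyadic annulus $\{2^j|x-y|\sim2^\ell\}$ then yields the displayed estimate. Equivalently, one may rescale so that $L_j:=-2^{-2j}\Delta_\Omega$ acts on a $d$-dimensional space of homogeneous type whose heat semigroup obeys Gaussian bounds uniformly in $j$, and invoke the abstract spectral-multiplier lemma there, as in \cite{DuongOuhabazSikora}; we include the argument for completeness.

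Finally, to deduce (ii), I would apply the standard Calderón--Zygmund reduction in the self-adjoint setting of \cite{DuongOuhabazSikora}: weak-type $(1,1)$ for $m(\sqrt{-\Delta_\Omega})$ follows from the $L^2$-bound together with a Hörmander-type condition on $m(\sqrt{-\Delta_\Omega})\bigl(I-e^{r^2\Delta_\Omega}\bigr)$, uniform over balls of radius $r$; this is checked by summing the weighted Plancherel bounds over the pieces $m_j$, splitting according to whether $2^jr\ge1$ --- where the factor $I-e^{r^2\Delta_\Omega}$ is harmless --- or $2^jr<1$, where its symbol contributes an extra factor of size $(r2^j)^2$ on the spectral support of $m_j(\sqrt{-\Delta_\Omega})$, leaving room for a convergent geometric series over scales. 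Crucially, this step uses only \eqref{E:crude heat bound} and finite speed of propagation, with no pointwise regularity of the heat kernel. The main obstacle is the weighted Plancherel bound: reconciling the $L^2$ functional calculus with the exact support information coming from finite speed of propagation, and closing the estimate with precisely $\lfloor d/2\rfloor+1$ derivatives, is the delicate point, and is exactly where the techniques developed for multiplier theorems on Lie groups come into play.
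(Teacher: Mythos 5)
Your overall architecture is the same as the paper's: the only inputs are the Gaussian bound \eqref{E:crude heat bound} and finite propagation speed, the cancellation is implemented through $I-e^{r^2\Delta_\Omega}$, and one concludes via weak-type $(1,1)$, interpolation, and duality. The genuine gap is in the core estimate. The weighted Plancherel bound you display, $\int_\Omega|K_j(x,y)|^2(1+2^j|x-y|)^{2s}\,dx\lesssim 2^{jd}$ with the \emph{same} exponent $s$ as the Sobolev index of $g_j$, is not what your sketch produces. On the annulus $2^j|x-y|\sim 2^\ell$ (choosing $R\sim2^\ell$ so the near part vanishes there), the far part is controlled only through $\sup|{\rm far}|\lesssim\int_{|t|\gtrsim 2^{-j}2^\ell}|\widehat{m_j}(t)|\,dt\lesssim 2^{\ell(\frac12-s)}\|g_j\|_{W^{s,2}}$, and combining this with the $2^{jd/2}$ bound on kernels against $\delta_y$ gives $\int_{2^j|x-y|\sim2^\ell}|K_j|^2\,dx\lesssim 2^{jd}2^{\ell(1-2s)}$, i.e.\ a weighted bound with weight exponent strictly less than $s-\tfrac12$: the Cauchy--Schwarz step on the Fourier side costs half a derivative. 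Your H\"ormander-condition summation needs weight exponent strictly larger than $\tfrac d2$ (the tail integral $\int_{|x-y|>r}(1+2^j|x-y|)^{-2s'}dx$ and the sum over $j$ with $2^jr\ge1$ both require it), so you need $s>\tfrac d2+\tfrac12$. Since \eqref{E:symbol condition} only supplies $s\le\lfloor\tfrac d2\rfloor+1$, this closes for even $d$ but fails for odd $d$ --- in particular for $d=3$, where $\lfloor\tfrac d2\rfloor+1=\tfrac d2+\tfrac12$ and the sum over annuli is (at best) logarithmically divergent. A lossless weighted estimate of the type you wrote is exactly the kind of Plancherel-type input that does not follow from \eqref{E:crude heat bound} and finite speed by this argument.

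The missing idea is the one the paper uses in Lemma~\ref{L:Mikhlin kernel} (and which goes back to Alexopoulos): after removing the piece with compactly supported Fourier transform, the remainder $a_2$ is estimated \emph{pointwise} by Taylor-expanding the symbol to order $\lfloor\tfrac d2\rfloor$ and exploiting the vanishing moments of $\check\varphi_R$, which converts the sup-norm Mikhlin bounds \eqref{E:symbol condition} into decay $(R\lambda)^{-\lfloor d/2\rfloor-1}$ with no half-derivative loss; the resulting majorant is then written as a positive superposition of Gaussians $e^{-t\lambda^2}$ so that \eqref{E:crude heat bound} can be applied to $\delta_y$. You would need to import this moment-cancellation step (or an additional Plancherel-type estimate for $-\Delta_\Omega$) to get the theorem with precisely $\lfloor\tfrac d2\rfloor+1$ derivatives in all dimensions. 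A secondary, fixable, flaw: the factorization $\Phi_{j,R}=e^{2^{-2j}\Delta_\Omega}\widetilde\Phi$ with $\|\widetilde\Phi\|_{L^\infty}\lesssim1$ is not legitimate, because the near part is a convolution in $\lambda$ and is not compactly supported, so multiplying its symbol (which has only polynomial-type decay at rate $2^{-j}R$) by $e^{2^{-2j}\lambda^2}$ destroys boundedness; the correct way to make sense of these operators applied to $\delta_y$ is again majorization by decaying functions of $\lambda$ represented as superpositions of heat multipliers. That repair is routine; the half-derivative loss in odd dimensions is the substantive gap.
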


\begin{remark}
We have written our multipliers as functions of $\sqrt{-\Delta_\Omega}$ to emphasize the parallel to the Mikhlin theorem for Fourier multipliers.  If we choose a function $F$ so
that $F(\lambda^2)=m(\lambda)$ then the condition \eqref{E:symbol condition} is equivalent to $|F^{(k)}(\lambda^2)|\lesssim \lambda^{-2k}$
for $0\le k\le \lfloor\tfrac d2\rfloor +1$.
\end{remark}

Theorem~\ref{T:Mikhlin} is not new; it is subsumed by some comparatively general/abstract multiplier theorems. Most works in this direction have concentrated on concrete scenarios
(Lie groups and Schr\"odinger operators in $\R^d$, in particular) that almost include what is needed here, but not quite.  Nevertheless, the elegant techniques that have been
developed do allow for a concise and informative proof in our setting.  For this reason (and as a convenience to readers) we present a proof here.  Our presentation is primarily
influenced by \cite{Alexopoulos,SikoraWright}, which may be consulted for an introduction to earlier works.

The proof of Theorem~\ref{T:Mikhlin} is modelled on the classical Calder\'on--Zygmund argument for convolution operators $f\mapsto K*f$ discussed, for example, in
\cite[\S II.2--3]{Stein:small}.  The key condition besides boundedness in $L^2$ (which is equivalent to $|\hat K|\lesssim 1$) is that the kernel obeys
a cancellation condition.  Traditionally, this takes the form
\begin{align}\label{E:CZ conv cond}
\int_{|x-y|>2|y-y'|} |K(x-y)-K(x-y')|\,dx \lesssim 1 \qquad \text{uniformly for $y,y'\in\R^d$.}
\end{align}
The essence of this statement is that if $f\in L^1(\R^d)$ has mean zero, then $K*f$ is absolutely integrable away from the support of $f$, specifically,
on the set $\{ x : \dist(x,\supp(f)) \geq 2 \diam(\supp(f))\}$. Indeed, \eqref{E:CZ conv cond} corresponds to $f=\delta_y-\delta_{y'}$.

The next lemma represents the analogue of \eqref{E:CZ conv cond} in our setting.  The notion of zero mean needs to be adapted to the operator in question.
To better see the parallel, we note that in the usual Euclidean setting, $f=[1-e^{r^2\Delta}]\delta_y$ has mean zero and is morally supported on the ball $\{|x-y|<r\}$.

\begin{lemma}[Kernel bounds for multipliers]\label{L:Mikhlin kernel}
Let $m:[0,\infty)\to\C$ obey \eqref{E:symbol condition}.  Then
\begin{align}\label{E:Mikhlin kernel 0}
 \bigl\| m\bigl(\sqrt{-\Delta_\Omega}\bigr) \bigl[1-e^{r^2\Delta_\Omega}\bigr]\delta_y \bigr\|_{L^1(\{x\in\Omega : |x-y| > r\})}
  \lesssim 1
\end{align}
uniformly for $y\in \Omega$ and $r>0$.
\end{lemma}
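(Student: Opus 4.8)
The operator in \eqref{E:Mikhlin kernel 0} is $F(\sqrt{-\Delta_\Omega})$ with $F(\lambda)=m(\lambda)\bigl(1-e^{-r^2\lambda^2}\bigr)$, and the assertion is that its integral kernel $\mathcal K_F$ obeys $\int_{\{|x-y|>r\}}|\mathcal K_F(x,y)|\,dx\lesssim1$. My plan is first to reduce to $r=1$: rescaling $x\mapsto x/r$ turns the instance with obstacle $\Omega$ and parameter $r$ into one with obstacle $r^{-1}\Omega$ and parameter $1$, while $m$ becomes $m(r\,\cdot\,)$, which still satisfies \eqref{E:symbol condition} with the same constants; since all facts about the domain used below hold with constants uniform over the class of exterior domains --- namely the Gaussian bound \eqref{E:crude heat bound}, finite speed of propagation for $\cos(t\sqrt{-\Delta_\Omega})$, and the volume comparison $\tfrac12|B(x,\rho)|\le|B(x,\rho)\cap\Omega|\le|B(x,\rho)|$ (lower bound: a hyperplane separates $x\in\Omega$ from the convex set $\Omega^c$) --- it suffices to treat $r=1$. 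Next I would decompose $F=\sum_{j\in\Z}F_j$ dyadically in frequency, $F_j(\lambda)=\phi(2^{-j}\lambda)F(\lambda)$ with $\phi\in C_c^\infty$ supported where $\lambda\sim1$. Since $1-e^{-\lambda^2}\sim\lambda^2$ near the origin, the normalized pieces $G_j:=F_j(2^j\,\cdot\,)$ are supported in a fixed compact set and obey
\[
\|G_j\|_{C^{N}}\lesssim A_j:=\min\{1,2^{2j}\},\qquad N:=\lfloor\tfrac d2\rfloor+1 ;
\]
the low-frequency gain $A_j=2^{2j}$ is the quantitative ``zero-mean'' property supplied by the heat factor, and it is exactly what forces the far-field sum to converge. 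Each $F_j(\sqrt{-\Delta_\Omega})$ has a bounded, Gaussian-decaying kernel (write it as $e^{c4^{-j}\Delta_\Omega}\widetilde F_j(\sqrt{-\Delta_\Omega})e^{c4^{-j}\Delta_\Omega}$ with $\widetilde F_j$ bounded and invoke \eqref{E:crude heat bound}), so handling the $\mathcal K_{F_j}$ termwise is legitimate and the estimate we prove will show $\sum_j\mathcal K_{F_j}$ converges absolutely in $L^1(\{|x-y|>r\})$.

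The heart of the proof is the single-piece bound
\[
\Bigl(\int_{\{|x-y|\sim\rho\}}|\mathcal K_{F_j}(x,y)|^2\,dx\Bigr)^{1/2}\;\lesssim\;A_j\,2^{jd/2}\,\bigl(1+2^j\rho\bigr)^{-N},
\]
uniformly in $y$ and $\rho$. This is the finite-speed-of-propagation technique of Alexopoulos and Sikora--Wright, resting on two facts. (i) The \emph{Plancherel estimate} --- a consequence of the Gaussian \emph{upper} bound \eqref{E:crude heat bound} and the volume comparison above --- asserts that $\|\mathcal K_{G(\sqrt{-\Delta_\Omega})}(\cdot,y)\|_{L^2}\lesssim R^{d/2}\|G(R\,\cdot\,)\|_{L^2}$ for every bounded $G$ supported where $\lambda\sim R$. (ii) Finite speed of propagation: from $G(\sqrt{-\Delta_\Omega})=c\int_\R\widehat G(t)\cos(t\sqrt{-\Delta_\Omega})\,dt$, if $\widehat G$ is supported in $\{|t|\le\sigma\}$ then $\mathcal K_{G(\sqrt{-\Delta_\Omega})}$ is supported in $\{|x-y|\le\sigma\}$, since the intrinsic distance on $\Omega$ dominates the Euclidean one. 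One then splits $F_j$ in the wave-time variable, $F_j=\sum_{l\ge0}F_j^{(l)}$ with $\widehat{F_j^{(l)}}$ supported where $|t|\sim2^{l-j}$ (and $|t|\lesssim2^{-j}$ for $l=0$). Modulo harmless rapidly-decaying tails, each $F_j^{(l)}$ is still localized to frequency $\sim2^j$, so (i) gives $\|\mathcal K_{F_j^{(l)}}(\cdot,y)\|_{L^2}\lesssim2^{jd/2}\|F_j^{(l)}(2^j\,\cdot\,)\|_{L^2}$, (ii) puts its kernel in $\{|x-y|\lesssim2^{l-j}\}$, and $\|F_j^{(l)}(2^j\,\cdot\,)\|_{L^2}\lesssim2^{-lN}\|G_j\|_{H^{N}}\lesssim2^{-lN}A_j$ because $G_j\in C^{N}$ has compact support. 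Summing over the $l$ with $2^{l-j}\gtrsim\rho$ --- the only ones contributing on $\{|x-y|\sim\rho\}$ --- yields the displayed bound. The one subtlety, and the step I expect to be the real obstacle, is that one must work throughout with $L^2$-based quantities: the $L^2$ form of the Plancherel estimate, and the Sobolev norm $\|G_j\|_{H^{N}}$ rather than a pointwise bound on $\widehat{G_j}$ deduced from $\|G_j\|_{C^{N}}$. A careless use of sup-norms costs half a derivative and leaves one just short of summability when $d$ is odd. (If the Plancherel estimate is not taken as known, extracting it from the Gaussian upper bound \eqref{E:crude heat bound} is itself the substantive ingredient.)

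It remains to sum. By Cauchy--Schwarz on each dyadic shell, using $|B(y,2^{k+1})\cap\Omega|\lesssim2^{kd}$,
\[
\int_{\{|x-y|>r\}}|\mathcal K_F(x,y)|\,dx\;\le\;\sum_{k:\,2^k\ge r}\sum_{j\in\Z}2^{kd/2}\bigl\|\mathcal K_{F_j}(\cdot,y)\bigr\|_{L^2(\{|x-y|\sim2^k\})}\;\lesssim\;\sum_{k:\,2^k\ge r}\sum_{j\in\Z}2^{(j+k)d/2}\bigl(1+2^{j+k}\bigr)^{-N}A_j .
\]
Writing $w=2^{j+k}$: when $w\gtrsim1$ the factor $w^{d/2-N}$ is summable since $N>\tfrac d2$; when $w\lesssim1$ one has $A_j=2^{2j}=(w/2^{k})^2$ (taking $r=1$), and the resulting $2^{-2k}$ provides summability over shells. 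Carrying out the two geometric sums leaves $\lesssim\sum_{k\ge0}\bigl(2^{-2k}+2^{k(d/2-N)}\bigr)\lesssim1$, with the implied constant depending only on $d$ and on the constants in \eqref{E:symbol condition}. This proves \eqref{E:Mikhlin kernel 0}.
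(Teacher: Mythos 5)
Your overall architecture (reduction to $L^2$ bounds on dyadic shells via Cauchy--Schwarz, finite speed of propagation, and the quantitative mean-zero gain $1\wedge r\lambda$ coming from the heat factor) is sound, and your final double sum is arithmetically correct \emph{given} your single-piece estimate. The gap is ingredient (i). The bound $\|\mathcal K_{G(\sqrt{-\Delta_\Omega})}(\cdot,y)\|_{L^2}\lesssim R^{d/2}\|G(R\,\cdot\,)\|_{L^2}$ for bounded $G$ supported where $\lambda\sim R$ is \emph{not} a consequence of the Gaussian upper bound \eqref{E:crude heat bound} and the half-space volume comparison. Writing $\|\mathcal K_{G(\sqrt{-\Delta_\Omega})}(\cdot,y)\|_{L^2}^2=\int|G(\lambda)|^2\,d\mu_y(\lambda)$ with $\mu_y=\langle dE_\lambda\delta_y,\delta_y\rangle$, the Gaussian bound only gives $\mu_y([0,\lambda])\lesssim\lambda^d$, hence the sup-norm version $\lesssim R^{d/2}\|G\|_\infty$; your $L^2$ version is equivalent (test with $G$ the characteristic function of a short interval) to a spectral-measure density bound $d\mu_y\lesssim\lambda^{d-1}\,d\lambda$, which is precisely the ``Plancherel condition'' of Duong--Ouhabaz--Sikora --- an \emph{additional hypothesis} in that framework, not a consequence of Gaussian bounds. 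For the exterior Dirichlet Laplacian it is plausibly true, but proving it needs genuinely different input (limiting absorption / uniform spectral measure bounds for non-trapping obstacles), far heavier than the lemma itself and nowhere available from \eqref{E:crude heat bound}. And, by your own accounting, retreating to the sup-norm version costs a factor $2^{l/2}$ per time-dyadic piece and leaves the sum divergent exactly when $d$ is odd --- in particular for $d=3$, the case of principal interest.

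The paper closes this by avoiding the frequency-dyadic plus time-dyadic double decomposition altogether. After the same reduction to the shell estimate \eqref{E:Mikhlin kernel}, it sets $a(\lambda)=m(\lambda)[1-e^{-r^2\lambda^2}]$ and splits $a=a_1+a_2$ with $a_1=a*\check\varphi_R$, where $\varphi$ is a single bump at time scale $R$; finite speed of propagation shows $a_1(\sqrt{-\Delta_\Omega})\delta_y$ is supported in $\{|x-y|\leq R/2\}$ and so drops out, while the vanishing moments of $\check\varphi_R$ together with a Taylor expansion to order $\lfloor d/2\rfloor$ give the pointwise bound $|a_2(\lambda)|\lesssim[1\wedge r\lambda]\,(1+R^2\lambda^2)^{-k/2}$ with $k=\lfloor d/2\rfloor+1>d/2$. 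Because the full decay is harvested at the pointwise level before any spectral integration, the proof then needs only $\|e^{t\Delta_\Omega}\delta_y\|_{L^2}\lesssim t^{-d/4}$ (a direct consequence of \eqref{E:crude heat bound}), applied through a subordination formula, to get $\|a_2(\sqrt{-\Delta_\Omega})\delta_y\|_{L^2}\lesssim(\tfrac rR)^{\sigma/2}R^{-d/2}$; no Plancherel condition and no half-derivative loss ever enter. If you wish to keep your decomposition, you must either prove the spectral-measure density bound separately or replace step (i) by a pointwise/moment-cancellation argument of this type; as written, the key estimate is unjustified.
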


\begin{proof}   Choose $\sigma\in\{\tfrac12,1\}$ so that $\lfloor\tfrac d2\rfloor +1=\frac d2+\sigma$.  Dividing the region $\{x: |x-y| > r\}$ into dyadic annuli where $|x-y|\sim R$, we see that it suffices to prove the following:
\begin{align}\label{E:Mikhlin kernel}
 \bigl\| m\bigl(\sqrt{-\Delta_\Omega}\bigr) \bigl[1-e^{r^2\Delta_\Omega}\bigr]\delta_y \bigr\|_{L^2(\{x\in\Omega : |x-y| > R\})}
  \lesssim \bigl(\tfrac rR\bigr)^{\frac\sigma2} R^{-\frac d2}
\end{align}
uniformly for $y\in \Omega$ and $R\geq r>0$.  The remainder of the proof is devoted to verifying this statement.

Let $a(\lambda):=m(\lambda)[1-e^{-r^2\lambda^2}]$, which we extend to all of $\R$ as an even function.  Elementary computations show that \eqref{E:symbol condition} implies
\begin{equation}\label{a multiplier bounds}
\bigl| \partial^k a(\lambda)|\lesssim  |\lambda|^{-k} \bigl[ 1 \wedge r|\lambda|\bigr] ^2\qtq{for all} 0\leq k\leq \tfrac d2 + \sigma.
\end{equation}

Next, we choose $\varphi\in C^\infty_c(\R)$ that is even, supported on $[-\frac12,\frac12]$, and obeys $\varphi(\tau)=1$ whenever $|\tau|<\frac14$. We then define $\check\varphi$
and $\check\varphi_R$ by
\begin{equation}
 {\check\varphi}_R(\lambda):= R {\check\varphi}(R\lambda) := \int e^{i\lambda\tau} \varphi\bigl(\tfrac\tau R\bigr)\,\tfrac{d\tau}{2\pi}.
\end{equation}
Taking the Fourier transform and then differentiating yields
\begin{equation}\label{perp to poly}
\varphi\bigl(\tfrac\tau R\bigr) =  \int e^{-i\lambda\tau} {\check\varphi}_R(\lambda)\,d\lambda \qtq{and}
    \int\lambda^\ell\check\varphi_R(\lambda)\,d\lambda = \delta_{0\ell}
\end{equation}
for all integers $\ell\geq 0$.

As both $a$ and $\varphi$ are even,
$$
a_1(\lambda) := (a * \check\varphi_R) (\lambda) = \int_0^\infty \tfrac1\pi\cos(\lambda\tau) \hat{a}(\tau) \varphi\bigl(\tfrac\tau R\bigr)\,d\tau,
$$
where $\hat a$ may be interpreted distributionally.  Noting that $\varphi(\tau/ R)$ is supported where $|\tau|\leq\frac{R}2$, finite speed of propagation for the wave equation
guarantees
$$
\supp\Bigl( a_1\bigl(\sqrt{-\Delta_\Omega}\bigr)\delta_{y} \Bigr) \subseteq \bigcup_{\tau\leq\frac R2} \supp\Bigl( \cos\bigl(\tau\sqrt{-\Delta_\Omega}\bigr)\delta_{y} \Bigr)
\subseteq \{ x\in\Omega : |x-y|\leq \tfrac12 R\}.
$$
Thus this part of the multiplier $a$ does not contribute to LHS\eqref{E:Mikhlin kernel}.  The use of finite speed of propagation for the wave equation as a tool for
estimating the kernels of spectral mulitipliers was introduced by M.~Taylor, \cite{Taylor:Laplace}.  For a proof of finite speed of propagation for the wave equation,
see \cite{Taylor:vol1}.

To control the remaining part of the multiplier, we will prove a pointwise bound on
$$
a_2(\lambda) := a_1(\lambda)-a(\lambda) = \int [a(\theta)-a(\lambda)] \check\varphi_R(\lambda-\theta)\,d\theta.
$$

When $|\lambda| \leq R^{-1}$ we use $|a(\theta)|\lesssim [1\wedge r|\theta|]^2$.  Combining this with the rapid decay of $\check\varphi$, we obtain
\begin{equation}\label{a_2 bound 1}
|a_2(\lambda)| \lesssim \bigl(\tfrac rR\bigr)^2\lesssim \bigl(\tfrac rR\bigr)^{\frac \sigma2}  \quad\text{when} \quad |\lambda| \leq R^{-1}.
\end{equation}

When $|\lambda| \geq R^{-1}$, we expand $a(\theta)$ in a Taylor series to order $k-1=\lfloor\frac d2\rfloor$; specifically,
$$
a(\theta)-a(\lambda)=P_k(\theta) + {\mathcal E}(\theta) \qtq{where} P_k(\theta) := \sum_{\ell=1}^{k-1} \frac{a^{(\ell)}(\lambda)}{\ell !} (\theta-\lambda)^\ell
$$
and ${\mathcal E}$ denotes the error, which we estimate using \eqref{a multiplier bounds} as follows:
\begin{equation*}
|{\mathcal E}(\theta)| \leq |a(\theta)| + |a(\lambda)| + |P_k(\theta)| \lesssim \bigl[1\wedge r|\lambda|\bigr]^2 \bigl|\tfrac{\theta-\lambda}{\lambda}\bigr|^k
    \quad\text{when}\quad |\theta-\lambda| > \tfrac12|\lambda|
\end{equation*}
and
\begin{equation*}
|{\mathcal E}(\theta)| \leq \bigl\| a^{(k)} \bigr\|_{L^\infty([\frac\lambda2,\frac{3\lambda}2])} |\theta - \lambda|^k
    \lesssim \bigl[1\wedge r|\lambda|\bigr]^2\bigl|\tfrac{\theta-\lambda}{\lambda}\bigr|^k
    \quad\text{when}\quad |\theta-\lambda| \leq \tfrac12|\lambda|.
\end{equation*}

From the orthogonality property \eqref{perp to poly}, we see that $P_k(\theta)$ makes no contribution to the convolution defining $a_2(\lambda)$.  Thus for $ |\lambda| \geq R^{-1}$,
\begin{equation}\label{a_2 bound 2}
|a_2(\lambda)| \lesssim  \int |{\mathcal E}(\theta)| |\check\varphi_R(\lambda-\theta)|\,d\theta \lesssim \bigl[1\wedge r|\lambda|\bigr]^2(R\lambda)^{-k}
\lesssim  \bigl(\tfrac rR\bigr)^{\frac \sigma2} \bigl(R|\lambda|\bigr)^{-\frac{d+\sigma}2}.
\end{equation}

Combining \eqref{a_2 bound 1} and \eqref{a_2 bound 2} and making some elementary manipulations shows
\begin{align*}
|a_2(\lambda)| &\lesssim \bigl[\tfrac rR\bigr]^{\frac\sigma2} (1+R^2\lambda^2)^{-\frac{d+\sigma}4}
    =  \frac{\bigl(\tfrac rR\bigr)^{\frac\sigma2}}{\Gamma(\frac{d+\sigma}4)} \int_0^\infty \bigl(\tfrac t{R^2}\bigr)^{\frac{d+\sigma}4} e^{-t/R^{2}} e^{-t\lambda^2}\tfrac{dt}{t}.
\end{align*}
The significance of the final representation is that the crude heat kernel estimate \eqref{E:crude heat bound} guarantees
$$
\| e^{t \Delta_\Omega} \delta_y \|_{L^2(\Omega)} \lesssim_d t^{-\frac d4}.
$$
Combining the two shows that
$$
\bigl\| a_2\bigl(\sqrt{-\Delta_\Omega}\,\bigr) \delta_y \bigr\|_{L^2(\Omega)}
    \lesssim_d \bigl(\tfrac rR\bigr)^{\frac\sigma2} R^{-\frac d2}\int_0^\infty \bigl(\tfrac t{R^2}\bigr)^{\frac\sigma4} e^{-t/R^{2}} \tfrac{dt}{t}
\lesssim_d \bigl(\tfrac rR\bigr)^{\frac\sigma2} R^{-\frac d2}.
$$
This completes the proof of Lemma~\ref{L:Mikhlin kernel}.
\end{proof}

\begin{proof}[Proof of Theorem~\ref{T:Mikhlin}]
We will prove that $T:=m(\sqrt{-\Delta_\Omega}\,)$ maps $L^1$ into weak-$L^1$.  Boundedness in $L^2$ follows from the spectral theorem.  Thus, by the Marcinkiewicz interpolation
theorem we deduce boundedness in $L^p$ for $1<p\leq 2$.  The result for $2<p<\infty$ then follows via duality.

Recall the Calder\'on--Zygmund decomposition (cf. \cite[\S I.3]{Stein:small}): Given $f\in L^1(\Omega)$ and $h>0$, there is a family of non-overlapping cubes
$\{Q_k\}\subset\R^d$ so that if we write $f=g+b$ with $b=\sum b_k$ and $b_k = \chi_{Q_k} f$, then
\begin{equation}\label{CZ decomp}
|g| \leq h \qquad\text{and}\qquad |Q_k| \leq \frac{1}{h} \int_{Q_k} |f(x)| \,dx \lesssim_d |Q_k|.
\end{equation}
We write $r_k$ for the radius ($\frac12\cdot$diameter) of $Q_k$ and $Q_k^*$ for the smallest concentric cube containing a ball of radius $2r_k$, that is, the $2\sqrt{d}$ dilate of
$Q_k$.

When treating usual Fourier multipliers, one alters this decomposition to make $b$ have mean zero on each cube $Q_k$.  To compensate, $g$ is altered to be a constant on each such cube.  In this setting, it is more natural to chose a notion of average that is adapted to the semigroup in question.  To this end, we define
$$
g_k := e^{r_k^2\Delta_\Omega} b_k \qtq{and}  \tilde b_k := \bigl[1 - e^{r_k^2\Delta_\Omega} \bigr] b_k .
$$

We now proceed in the usual manner:
\begin{equation*}
\big\{ |T f| > h \bigr\} \subseteq \big\{ |T g| > \tfrac13h \bigr\} \cup  \big\{ |T \, {\textstyle\sum} g_k| > \tfrac13h \bigr\}
    \cup \big\{|T \, {\textstyle\sum} \tilde b_k| > \tfrac13h  \bigr\}
\end{equation*}
and so, by Chebyshev's inequality and boundedness of $T$ in $L^2$,
\begin{equation*}
\bigl| \{ |T f| > h \} \bigr| \lesssim h^{-2} \bigl\|g \bigr\|_{L^2(\Omega)}^2 + h^{-2} \bigl\| {\textstyle\sum} g_k \bigr\|_{L^2(\Omega)}^2
    + h^{-1} {\textstyle\sum}\, \bigl\|  T \tilde b_k\bigr\|_{L^1(\Omega\setminus Q_k^*)} + {\textstyle\sum}\, |Q_k^*| .
\end{equation*}
Estimates on the first and last summands follow directly from \eqref{CZ decomp}:
$$
\|g\|_{L^2}^2 \leq \|g\|_{L^\infty}\|g\|_{L^1} \leq h \|f\|_{L^1}
\qtq{and}
\sum |Q_k^*| \lesssim_d  \sum \tfrac{1}{h} \int_{Q_k} \! |f(x)| \,dx \leq \tfrac{1}{h} \|f\|_{L^1}.
$$

Lemma~\ref{L:Mikhlin kernel} gives the needed bound on the $\tilde b_k$ terms, namely,
\begin{align*}
\bigl\|  T \tilde b_k\bigr\|_{L^1(\Omega\setminus Q_k^*)} &\lesssim \| b_k\|_{L^1}.
\end{align*}
By construction, $\sum |b_k| \leq |f|$ and so the above estimate can
be summed in $k$.

This leaves us to estimate $\| {\textstyle\sum} g_k
\|_{L^2(\Omega)}^2$.  Expanding the square and using the crude heat
kernel bound \eqref{E:crude heat bound},
\begin{align*}
\| {\textstyle\sum} g_k \|_{L^2(\Omega)}^2
&= \sum_{k,\ell} \langle b_k ,\; e^{(r_k^2+r_\ell^2)\Delta_\Omega} b_\ell \rangle \\
&\lesssim \sum_{r_k \geq r_\ell} \frac{1}{r_k^d}\int_{Q_\ell}\int_{Q_k} |b_k(x)| \, e^{- |x-y|^2/ 8 r_k^2} \, |b_\ell(y)| \,dx\,dy \\
&\lesssim \sum_{r_k \geq r_\ell} \frac{\|b_\ell\|_{L^1}}{|Q_\ell| r_k^d}  \int_{Q_\ell} \int_{Q_k} |b_k(x)| \, e^{ - |x-y'|^2/ 16 r_k^2}\, dx\,dy'.
\end{align*}
For the last step we used the fact that $|x-y|^2 \geq \tfrac12|x-y'|^2 - 4 r_l^2$ for any pair $y,y'\in\supp(b_\ell)=Q_\ell$.  From
\eqref{CZ decomp} we have $\int |b_\ell|\lesssim_d h |Q_\ell|$ and so
\begin{align*}
\| {\textstyle\sum} g_k \|_{L^2(\Omega)}^2
&\lesssim h \sum_{k} \frac{1}{r_k^d}  \int_{\Omega} \int_{Q_k} |b_k(x)| \, e^{ - |x-y'|^2/ 16 r_k^2} \, dx\,dy'\\
&\lesssim h \sum_{k}  \int_{Q_k} |b_k(x)|\, dx
\lesssim h \|f\|_{L^1}.
\end{align*}

Putting everything together, we obtain
$$
\bigl| \{ |T f| > h \} \bigr| \lesssim h^{-1} \|f\|_{L^1},
$$
which proves that $T$ maps $L^1$ into weak-$L^1$.  This completes the proof of Theorem~\ref{T:Mikhlin}.
\end{proof}

\section{Littlewood--Paley theory on exterior domains}\label{SS:LP}

In this section, we develop the basic ingredients of Littlewood--Paley theory adapted to the Dirichlet Laplacian on $\Omega$.
More precisely, we deduce the Bernstein inequalities and the square function inequalities from the multiplier theorem proved in
the previous section.

We will describe two kinds of Littlewood--Paley projections: one based on $C^\infty_c$ spectral multipliers and another based on the heat
kernel.  The latter version is much closer to the roots of the subject and will be used below to prove equivalence of Sobolev spaces.
We begin with the definition of the former.

Fix $\phi:[0,\infty)\to[0,1]$  a smooth non-negative function obeying
\begin{align*}
\phi(\lambda)=1 \qtq{for} 0\le\lambda\le 1 \qtq{and} \phi(\lambda)=0\qtq{for} \lambda\ge 2.
\end{align*}
For each dyadic number $N\in 2^\Z$, we then define
\begin{align*}
\phi_N(\lambda):=\phi(\lambda/N) \qtq{and} \psi_N(\lambda):=\phi_N(\lambda)-\phi_{N/2}(\lambda);
\end{align*}
notice that $\{\psi_N(\lambda)\}_{N\in \tz} $ forms a partition of unity for $(0,\infty)$.  With these functions in place,
we can now define our first family of Littlewood--Paley projections:
\begin{align*}
\po_{\le N} :=\phi_N\bigl(\sqrt{-\Delta_\Omega}\,\bigr), \quad \po_N :=\psi_N(\sqrt{-\Delta_\Omega}\,\bigr), \qtq{and} \po_{>N} :=I-\po_{\le N}.
\end{align*}

The second family of Littlewood--Paley projections we consider are
\begin{align*}
\tpo_{\le N} := e^{\Delta_\Omega/N^2}, \quad \tpo_N:=e^{\Delta_\Omega/N^2}-e^{4\Delta_\Omega/N^2}, \qtq{and} \tpo_{>N}:=I-\tpo_{\le N}.
\end{align*}

We will write $P_N$, $\tilde P_N$, and so forth, to represent the analogous operators associated to the usual Laplacian in the full Euclidean space.  Everything we discuss below applies equally well to these operators; indeed, we will be mimicking well-known proofs from the Euclidean setting.

We will show below that $f=\sum \po_N f = \sum \tpo_N f$ in $L^p$ sense; see Lemma~\ref{L:ident}.  This relies on some basic estimates for
these operators, which are included among the following:

\begin{lemma}[Bernstein estimates]\label{L:Bernie}
Fix $1<p<q\le \infty$ and $s\in\R$. Then for any $f\in C_c^{\infty}(\Omega)$, we have
\begin{gather*}
\|\po_{\le N} f \|_{\lpo}+\|\po_N f\|_{\lpo} + \|\tpo_{\le N} f \|_{\lpo}+\|\tpo_N f\|_{\lpo} \lesssim \|f\|_{\lpo},\\
\|\po_{\le N} f\|_{L^q(\Omega)}+\|\po_N f\|_{L^q(\Omega)} + \|\tpo_{\le N} f\|_{L^q(\Omega)}+\|\tpo_N f\|_{L^q(\Omega)}
    \lesssim N^{d(\frac 1p-\frac 1q)}\|f\|_{L^p(\Omega)},\\
N^s\|\po_N f\|_{\lpo} \sim \bigl\|(-\ld)^{\frac s2}\po_N f\bigr\|_{\lpo}.
\end{gather*}
\end{lemma}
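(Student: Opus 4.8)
\emph{Proof proposal.} The plan is to deduce all three displays from the multiplier theorem (Theorem~\ref{T:Mikhlin}), supplemented --- for the Bernstein gain only --- by the crude Gaussian upper bound \eqref{E:crude heat bound} on the heat kernel. The organising remark is that each of $\po_{\le N},\po_N,\tpo_{\le N},\tpo_N$ is of the form $g\bigl(N^{-1}\sqrt{-\ld}\,\bigr)$ for a \emph{fixed} function $g:[0,\infty)\to\C$ --- namely $g=\phi$, $g=\phi-\phi(2\,\cdot)$, $g(\mu)=e^{-\mu^2}$, and $g(\mu)=e^{-\mu^2}-e^{-4\mu^2}$ respectively --- each of which is smooth and either compactly supported or Schwartz, so it trivially obeys $|g^{(k)}(\mu)|\lesssim\mu^{-k}$ for $0\le k\le\lfloor\tfrac d2\rfloor+1$. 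Since the hypothesis \eqref{E:symbol condition} is invariant under the rescaling $\lambda\mapsto N^{-1}\lambda$ --- if $m(\lambda)=g(\lambda/N)$ then $|\partial^k m(\lambda)|=N^{-k}|g^{(k)}(\lambda/N)|\lesssim N^{-k}(\lambda/N)^{-k}=\lambda^{-k}$ --- Theorem~\ref{T:Mikhlin} applies to each such multiplier with an implicit constant independent of $N$. This immediately yields the first display, the functional calculus being unambiguous on $C^\infty_c(\Omega)\subset L^p(\Omega)\cap L^2(\Omega)$.

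For the second display I would argue that the extra factor $N^{d(1/p-1/q)}$ cannot come from Theorem~\ref{T:Mikhlin} and must be extracted from the heat semigroup. Indeed, \eqref{E:crude heat bound} bounds the kernel of $e^{t\ld}$ pointwise by the Euclidean Gaussian $G_t(x-y)$ with $G_t(z)=(4\pi t)^{-d/2}e^{-|z|^2/4t}$, and since $\|G_t\|_{L^r(\R^d)}\sim t^{-\frac d2(1/p-1/q)}$ for $\tfrac1r=1-\tfrac1p+\tfrac1q$ (and $1\le r<\infty$ under the hypotheses on $p,q$), Young's inequality gives $\|e^{t\ld}\|_{L^p(\Omega)\to L^q(\Omega)}\lesssim t^{-\frac d2(1/p-1/q)}$ for all $t>0$. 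Taking $t\in\{N^{-2},4N^{-2}\}$ disposes of $\tpo_{\le N}$ and $\tpo_N$. For $\po_{\le N}$ I would factor $\phi_N(\lambda)=m_N(\lambda)e^{-\lambda^2/N^2}$ with $m_N(\lambda):=\phi_N(\lambda)e^{\lambda^2/N^2}$; since $m_N(\lambda)=g(\lambda/N)$ for $g(\mu)=\phi(\mu)e^{\mu^2}$ smooth and compactly supported, the scaling remark makes $m_N\bigl(\sqrt{-\ld}\,\bigr)$ uniformly $L^p$-bounded, and writing $\po_{\le N}=e^{\ld/N^2}\circ m_N\bigl(\sqrt{-\ld}\,\bigr)$ (the factors commute) lets me apply the $L^p$ bound first and the $L^p\to L^q$ heat bound second. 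Applying the heat factor \emph{last} keeps the argument valid even at $q=\infty$, where Theorem~\ref{T:Mikhlin} is silent. The multiplier $\psi_N$ is handled identically, with $g(\mu)=\bigl(\phi(\mu)-\phi(2\mu)\bigr)e^{\mu^2}$ now compactly supported in $(0,\infty)$; this finishes the second display.

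For the third display I would insert on each side a fattened spectral cutoff. Fix $\tilde\psi\in C^\infty_c\bigl((0,\infty)\bigr)$ equal to $1$ on $\supp\bigl(\phi-\phi(2\,\cdot)\bigr)$ and set $\tilde\psi_N(\lambda):=\tilde\psi(\lambda/N)$, so that $\tilde\psi_N\bigl(\sqrt{-\ld}\,\bigr)\po_N=\po_N$. Note first that $(-\ld)^{s/2}\po_N=\bigl[\lambda^s\psi_N(\lambda)\bigr]\big|_{\lambda=\sqrt{-\ld}}$ has a bounded, compactly supported symbol and is therefore a genuine bounded operator on $L^2(\Omega)$, so $(-\ld)^{s/2}\po_N f$ is unambiguously defined for $f\in C^\infty_c(\Omega)$ even when $s<0$. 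Then I would exploit the two identities $(-\ld)^{s/2}\po_N=N^s\,g_1\bigl(N^{-1}\sqrt{-\ld}\,\bigr)\,\po_N$ with $g_1(\mu)=\mu^s\tilde\psi(\mu)$, and $\po_N=N^{-s}\,g_2\bigl(N^{-1}\sqrt{-\ld}\,\bigr)\,(-\ld)^{s/2}\po_N$ with $g_2(\mu)=\mu^{-s}\tilde\psi(\mu)$; both $g_1$ and $g_2$ are smooth and compactly supported in $(0,\infty)$, so the scaling remark bounds $g_j\bigl(N^{-1}\sqrt{-\ld}\,\bigr)$ on $L^p(\Omega)$ uniformly in $N$, and the two identities give the two halves of $N^s\|\po_N f\|_{L^p(\Omega)}\sim\|(-\ld)^{s/2}\po_N f\|_{L^p(\Omega)}$.

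The step I expect to be the main obstacle is not any single estimate but the uniformity bookkeeping: one must check that every composite symbol produced along the way --- $\phi(\mu)e^{\mu^2}$, $\mu^s\tilde\psi(\mu)$, $\mu^{-s}\tilde\psi(\mu)$, and so on --- really is a fixed admissible function of $\mu=N^{-1}\lambda$, so that Theorem~\ref{T:Mikhlin} genuinely delivers $N$-independent constants, and one must remember that the $L^p\to L^q$ improvement requires the explicit Gaussian bound \eqref{E:crude heat bound} (with the heat factor applied last, to cover $q=\infty$). Everything else is routine manipulation inside the $L^2$ functional calculus.
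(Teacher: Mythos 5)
Your proposal is correct and follows essentially the same route as the paper: the first and third displays come from Theorem~\ref{T:Mikhlin} via the scaling invariance of the symbol condition, and the second comes from the crude Gaussian heat-kernel bound \eqref{E:crude heat bound} plus Young's inequality, with $P^\Omega_{\le N}$ and $P^\Omega_N$ factored as $L^p$-bounded multipliers composed with the heat semigroup. Your version merely spells out the details (the fattened cutoff $\tilde\psi$, the $q=\infty$ bookkeeping) that the paper leaves implicit.
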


\begin{proof} The first and third rows of estimates follow simply from Theorem~\ref{T:Mikhlin}.

For the second row of estimates, it suffices to observe that from the crude heat kernel bound \eqref{E:crude heat bound} and Young's convolution inequality we have
$$
\| e^{\Delta_\Omega / N^2} f\|_{L^q(\Omega)} \lesssim N^{d(\frac1p -\frac1q)}\|f\|_{L^p(\Omega)}.
$$
Note that both $\po_N$ and $\po_{\leq N}$ can be written as products of $\tpo_{\leq N}$ with multipliers that are $L^p$-bounded by virtue of Theorem~\ref{T:Mikhlin}.
\end{proof}

\begin{lemma}[Expansion of the identity]\label{L:ident}  For any $1<p<\infty$ and any $f\in L^p(\Omega)$,
\begin{equation}\label{E:LP id}
f(x) = \sum_{N\in 2^\Z} \bigl[\po_N f\bigr](x) = \sum_{N\in 2^\Z} \bigl[\tpo_N f\bigr](x),
\end{equation}
as elements of $L^p(\Omega)$.  In particular, the sums converge in $L^p(\Omega)$.
\end{lemma}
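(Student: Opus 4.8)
The plan is to follow the classical template from Euclidean Littlewood--Paley theory: show that the partial sums are uniformly bounded on $L^p(\Omega)$, verify the identity on the dense subspace $C^\infty_c(\Omega)$, and conclude by a density argument. I read $\sum_{N\in2^\Z}$ as the limit of the partial sums over dyadic $N\in[N_0,N_1]$ as $N_0\to0$ and $N_1\to\infty$. Since $\psi_N=\phi_N-\phi_{N/2}$ and, likewise, $\tpo_N=\tpo_{\le N}-\tpo_{\le N/2}$ (because $e^{4\Delta_\Omega/N^2}=e^{\Delta_\Omega/(N/2)^2}$), these partial sums telescope:
$$
\sum_{N_0\le N\le N_1}\po_N=\po_{\le N_1}-\po_{\le N_0/2},\qquad \sum_{N_0\le N\le N_1}\tpo_N=\tpo_{\le N_1}-\tpo_{\le N_0/2}.
$$
So it suffices to prove that, in $L^p(\Omega)$, one has $\po_{\le N}f\to f$ and $\tpo_{\le N}f\to f$ as $N\to\infty$, while $\po_{\le N}f\to0$ and $\tpo_{\le N}f\to0$ as $N\to0$.

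For the uniform bounds: the symbols $\phi_{N_1}-\phi_{N_0/2}$ satisfy \eqref{E:symbol condition} with a constant independent of $N_0\le N_1$ (each $\partial^k\phi_M$ is supported where $\lambda\sim M$ and is $O(M^{-k})=O(\lambda^{-k})$ there), so Theorem~\ref{T:Mikhlin} makes $\po_{\le N_1}-\po_{\le N_0/2}$ bounded on $L^p(\Omega)$ uniformly in $N_0,N_1$; and each $\tpo_{\le N}=e^{\Delta_\Omega/N^2}$ is an $L^p(\Omega)$-contraction — by \eqref{E:crude heat bound} its kernel is nonnegative with $\int e^{t\Delta_\Omega}(x,y)\,dy\le1$, so this follows from Jensen's inequality — whence $\tpo_{\le N_1}-\tpo_{\le N_0/2}$ is uniformly bounded too. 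Given this and the density of $C^\infty_c(\Omega)$ in $L^p(\Omega)$, a routine $3\eps$-argument reduces everything to proving the four limits above for $f\in C^\infty_c(\Omega)$.

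So let $f\in C^\infty_c(\Omega)$, for which $\Delta_\Omega f=\Delta f\in C^\infty_c(\Omega)$. The high-frequency limits come with a quantitative rate. Writing $\po_{\le N}f-f=-m_N(\sqrt{-\Delta_\Omega})(-\Delta_\Omega)f$ with $m_N(\lambda):=(1-\phi_N(\lambda))\lambda^{-2}$ — which obeys \eqref{E:symbol condition} with constant $O(N^{-2})$ — Theorem~\ref{T:Mikhlin} gives $\|\po_{\le N}f-f\|_{L^p}\lesssim N^{-2}\|\Delta f\|_{L^p}\to0$; and from $\tpo_{\le N}f-f=\int_0^{1/N^2}e^{s\Delta_\Omega}(\Delta f)\,ds$ together with contractivity, $\|\tpo_{\le N}f-f\|_{L^p}\le N^{-2}\|\Delta f\|_{L^p}\to0$. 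The low-frequency limits use the global geometry of $\Omega$: by the Bernstein inequality of Lemma~\ref{L:Bernie} with any $p_1\in(1,p)$, $\|\po_{\le N}f\|_{L^p}\lesssim N^{d(1/p_1-1/p)}\|f\|_{L^{p_1}}\to0$ as $N\to0$; and by the crude heat kernel bound \eqref{E:crude heat bound} together with $L^1$-contractivity, $\|\tpo_{\le N}f\|_{L^p}\le\|e^{t\Delta_\Omega}f\|_{L^\infty}^{1-1/p}\|e^{t\Delta_\Omega}f\|_{L^1}^{1/p}\lesssim t^{-\frac d2(1-1/p)}\|f\|_{L^1}\to0$, where $t=1/N^2\to\infty$. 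Plugging these into the telescoping identities gives \eqref{E:LP id} for $f\in C^\infty_c(\Omega)$, and the density step extends it to all of $L^p(\Omega)$.

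The only place where anything genuinely happens is the low-frequency ($N\to0$) limit: both the Bernstein gain $N^{d(1/p_1-1/p)}$ and the heat-semigroup decay $t^{-\frac d2(1-1/p)}$ rely on $d\ge3$, i.e. on $\Omega$ being non-parabolic, which is consistent with the introduction's remark that the planar case is different. Everything else — the telescoping, the Mikhlin-based uniform boundedness, and the $3\eps$ density argument — is routine and essentially a transcription of the Euclidean argument being mimicked here.
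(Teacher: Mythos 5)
Your proof is correct, but the heart of it is organized differently from the paper's. The paper disposes of convergence in three lines: it first establishes \eqref{E:LP id} for $p=2$ via the spectral theorem, quoting the Morawetz identity to ensure that $-\Delta_\Omega$ has purely absolutely continuous spectrum (so $0$ is not an eigenvalue and the telescoping limits are the identity in $L^2$); it then notes, as you do, that Lemma~\ref{L:Bernie} makes the partial sums uniformly $L^q$-bounded for all $1<q<\infty$, and for $f\in C^\infty_c(\Omega)$ it upgrades the $L^2$ convergence to $L^p$ convergence by interpolation (H\"older between $L^2$ and an $L^q$ bound), finishing by density. You instead bypass the $L^2$/spectral step entirely and prove the four tail limits directly in $L^p$ with quantitative rates: $O(N^{-2})$ for the high-frequency tails, via the Mikhlin theorem applied to $(1-\phi_N(\lambda))\lambda^{-2}$ (using, implicitly but legitimately, that the operator bound in Theorem~\ref{T:Mikhlin} is linear in the constant of \eqref{E:symbol condition}) and via the semigroup identity $e^{t\Delta_\Omega}f-f=\int_0^t e^{s\Delta_\Omega}\Delta f\,ds$ with sub-Markovian contractivity; and $N^{d(1/p_1-1/p)}$ resp. $t^{-\frac d2(1-\frac1p)}$ for the low-frequency tails, via Bernstein and the Gaussian heat bound. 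What each route buys: the paper's argument is shorter but needs the external input that $0$ is not an eigenvalue; yours avoids any spectral statement about the bottom of the spectrum (the $d\geq3$ heat-kernel decay does that job), gives explicit rates, and isolates where non-parabolicity enters — at the modest cost of the extra symbol computations and the remark about tracking constants in the multiplier theorem. Both arguments share the same scaffolding of uniform boundedness of partial sums plus density of $C^\infty_c(\Omega)$.
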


\begin{proof}
The Morawetz identity shows that $-\Delta_\Omega$ has purely absolutely continuous spectrum (cf. \cite{CyconFKS}); thus $0$ is not
an eigenvalue and so \eqref{E:LP id} holds when $p=2$.

For any $1<p<\infty$, Lemma~\ref{L:Bernie} shows that $\po_{\leq N}$, $\po_{\geq N}$, $\tpo_{\leq N}$, and $\tpo_{\geq N}$ are all bounded on $L^p(\Omega)$.
This guarantees that partial sums are bounded in $L^p$, and so it suffices to prove convergence for $f\in C^\infty_c(\Omega)$.

For $f\in C^\infty_c(\Omega)$ we can exploit convergence in $L^2$ and boundedness in all $L^q$ with $1<q<\infty$ to obtain convergence in $L^p$ via
interpolation (H\"older's inequality).
\end{proof}

\begin{theorem}[Square function estimates]\label{T:sq}
Fix $1<p<\infty$ and $s\geq 0$. Then for any $f\in C_c^{\infty}(\Omega)$,
\begin{align*}
\biggl\|\biggl(\sum_{N\in2^\Z} N^{2s}| \po_N f|^2\biggr)^{\!\!\frac 12}\biggr\|_{\lpo} \!\!
    \sim \bigl\|(-\ld)^{\frac s2} f \bigr\|_{\lpo}
        \sim \biggl\|\biggl(\sum_{N\in2^\Z} N^{2s}| (\tpo_N)^k f|^2\biggr)^{\!\!\frac 12}\biggr\|_{\lpo}
\end{align*}
provided the integer $k\geq 1$ satisfies $2k > s$.
\end{theorem}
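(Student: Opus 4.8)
The plan is to deduce both claimed equivalences from a single principle. Call a real-valued $m\in C^\infty([0,\infty))$ a \emph{bump of order $a$} if $m\not\equiv 0$, $|\partial^j m(\mu)|\lesssim_j \mu^{a-j}$ near $\mu=0$, and $|\partial^j m(\mu)|\lesssim_j \mu^{-b-j}$ as $\mu\to\infty$ for some $b>0$, for all $0\le j\le\lfloor d/2\rfloor+1$. I would show that for any bump $m$ of order $a>s$,
\[
\Bigl\|\bigl({\textstyle\sum_{N\in2^\Z}}N^{2s}\,\bigl|m(N^{-1}\sqrt{-\ld}\,)f\bigr|^2\bigr)^{1/2}\Bigr\|_{\lpo}\ \sim\ \bigl\|(-\ld)^{s/2}f\bigr\|_{\lpo}\qquad\text{for all }f\in C^\infty_c(\Omega).
\]
Theorem~\ref{T:sq} then follows by applying this with $m=\psi:=\phi-\phi(2\,\cdot)$ (the bump defining $\po_N$; it is supported away from $0$, so any order works) and with $m=\Theta^k$, where $\Theta(\mu):=e^{-\mu^2}-e^{-4\mu^2}$ is the multiplier of $\tpo_N$: since $\Theta(\mu)=3\mu^2+O(\mu^4)$ near $0$ and $\Theta$ decays rapidly, $\Theta^k$ is a bump of order exactly $2k$, and the hypothesis $2k>s$ is precisely what the principle asks for. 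As a preliminary I would record that $(-\ld)^{s/2}f\in\lpo$ for $f\in C^\infty_c(\Omega)$: choosing an integer $M\ge s/2$, write $(-\ld)^{s/2}f=F(\sqrt{-\ld}\,)\,(1-\ld)^Mf$ with $F(\lambda)=\lambda^s(1+\lambda^2)^{-M}$; here $(1-\ld)^Mf=(1-\Delta)^Mf\in C^\infty_c(\Omega)\subset\lpo$, and $F$ obeys \eqref{E:symbol condition} on $(0,\infty)$ (because $s\ge0$ and $2M\ge s$), so $F(\sqrt{-\ld}\,)$ is bounded on $\lpo$ by Theorem~\ref{T:Mikhlin}. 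All functional-calculus identities below are justified the same way, using that $C^\infty_c(\Omega)$ lies in the domain of every power of $-\ld$ and that $0$ is not an eigenvalue (cf.\ the proof of Lemma~\ref{L:ident}).

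For the upper bound I would invoke Khintchine's inequality to reduce matters to the estimate $\bigl\|\sum_N\eps_N N^s m(N^{-1}\sqrt{-\ld}\,)f\bigr\|_{\lpo}\lesssim\bigl\|(-\ld)^{s/2}f\bigr\|_{\lpo}$, uniformly over sign sequences $\eps_N\in\{\pm1\}$. Putting $n(\mu):=\mu^{-s}m(\mu)$, itself a bump of order $a-s>0$, the left-hand operator is $\bigl[\sum_N\eps_N\,n(N^{-1}\,\cdot)\bigr](\sqrt{-\ld}\,)$ applied to $(-\ld)^{s/2}f\in\lpo$, so it suffices that $\lambda\mapsto\sum_N\eps_N n(\lambda/N)$ obey \eqref{E:symbol condition} uniformly in the signs. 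This is an elementary summation: using $a-s>0$ and $b>0$ one gets $\sum_N|\partial^j[n(\lambda/N)]|\lesssim\lambda^{-j}$ for each $j\le\lfloor d/2\rfloor+1$ (the $N\gg\lambda$ and $N\ll\lambda$ tails are geometric), and sign-uniformity is trivial by the triangle inequality; Theorem~\ref{T:Mikhlin} then closes this case.

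For the lower bound I would argue by duality: $\bigl\|(-\ld)^{s/2}f\bigr\|_{\lpo}=\sup\{|\langle(-\ld)^{s/2}f,h\rangle|:h\in C^\infty_c(\Omega),\ \|h\|_{L^{p'}}\le1\}$. Set $G(\lambda):=\sum_N m(\lambda/N)^2$; this is smooth, strictly positive (for $m=\psi$ because $\{\psi_N\}$ partitions unity, for $m=\Theta^k$ because $\Theta>0$ on $(0,\infty)$), and invariant under $\lambda\mapsto2\lambda$, hence log-periodic, so it is bounded above and below and $1/G$ obeys \eqref{E:symbol condition}; in particular $G(\sqrt{-\ld}\,)^{-1}$ is bounded on every $L^q(\Omega)$. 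From the reproducing identity $1=\sum_N G(\lambda)^{-1}m(\lambda/N)^2$ on $(0,\infty)$, the $L^p$-convergent expansion of $(-\ld)^{s/2}f$ it produces, and the commutation and self-adjointness of these operators (distributing one factor $m(N^{-1}\,\cdot)$, the factor $G^{-1}$, and the operator $(-\ld)^{s/2}$ across the inner product onto $h$), one obtains
\[
\langle(-\ld)^{s/2}f,\,h\rangle=\sum_N\bigl\langle N^s m(N^{-1}\sqrt{-\ld}\,)f,\ \tilde m(N^{-1}\sqrt{-\ld}\,)\,G(\sqrt{-\ld}\,)^{-1}h\bigr\rangle,\qquad\tilde m(\mu):=\mu^s m(\mu),
\]
where $\tilde m$ is a bump of order $a+s$. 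Cauchy--Schwarz in $N$ and then H\"older bound the right side by $\bigl\|(\sum_N N^{2s}|m(N^{-1}\sqrt{-\ld}\,)f|^2)^{1/2}\bigr\|_{\lpo}$ times $\bigl\|(\sum_N|\tilde m(N^{-1}\sqrt{-\ld}\,)G(\sqrt{-\ld}\,)^{-1}h|^2)^{1/2}\bigr\|_{L^{p'}}$, and this last factor is $\lesssim\|G(\sqrt{-\ld}\,)^{-1}h\|_{L^{p'}}\lesssim\|h\|_{L^{p'}}$ by exactly the Khintchine-plus-Theorem~\ref{T:Mikhlin} argument of the previous paragraph (the multiplier $\sum_N\eps_N\tilde m(N^{-1}\,\cdot)$ obeys \eqref{E:symbol condition} uniformly because $\tilde m$ vanishes to positive order at $0$ and decays at infinity). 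Taking the supremum over $h$ finishes the proof; the requisite $L^p$-convergence of the various expansions follows, as in Lemma~\ref{L:ident}, from uniform boundedness of their truncated multipliers.

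The step I expect to be most delicate is the uniform verification of \eqref{E:symbol condition} for the several series of dilated bumps — uniform both in the signs $\eps_N$ (needed for Khintchine) and in the truncations (needed for the $L^p$-convergence of the expansions) — together with organizing the duality so that no bare negative power $(-\ld)^{-s/2}$ ever appears: such a factor would push us out of $\lpo$ into a homogeneous space not containing $C^\infty_c(\Omega)$, and the log-periodic weight $1/G$ is exactly the device that sidesteps this. Note that $s$ enters only through the requirement $a>s$, i.e.\ $2k>s$; whether $s$ is an integer plays no role.
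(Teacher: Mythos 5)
Your proposal is correct and follows essentially the same route as the paper: the upper bound via Khintchine's inequality applied to the randomized multiplier $\sum_N\eps_N\,\mu^{-s}m(\mu/N)$ and Theorem~\ref{T:Mikhlin}, and the lower bound by duality using the bounded, log-periodic reciprocal multiplier $G(\lambda)^{-1}=\bigl(\sum_N m(\lambda/N)^2\bigr)^{-1}$ together with Cauchy--Schwarz and the already-proved upper bound, exactly as in the paper's treatment (which works with $S(g)$, $g=(-\ld)^{s/2}f$, rather than keeping $f$, a purely cosmetic difference). The only caveat is that your general ``bump of order $a$'' principle implicitly needs decay of order $b>s$ at infinity so that $\tilde m(\mu)=\mu^s m(\mu)$ is still an admissible multiplier in the dual step; this is harmless for the theorem since both $\psi$ (compactly supported) and $\Theta^k$ (Gaussian decay) decay rapidly.
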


\begin{proof} With Theorem~\ref{T:Mikhlin} in place, the proof of this result differs little from the argument in the Euclidean case (cf. \cite[\S IV.5]{Stein:small}).
We will just discuss the estimates for $\tpo_N$, the treatment of $\po_N$ being slightly simpler.

It suffices to prove that for all $g\in L^p(\Omega)$,
\begin{align*}
\| S(g) \|_{\lpo} \sim \| g \|_{\lpo} \qtq{where} S(g) := \biggl(\sum_{N\in2^\Z} N^{2s}| (\tpo_N)^k (-\ld)^{-\frac s2} g|^2\biggr)^{\!\!\frac 12}.
\end{align*}
Indeed, one merely has to apply this to $g=(-\Delta_\Omega)^{\frac s2}f$ with $f\in C^\infty_c(\Omega)$; to see that $g \in L^p(\Omega)$ we write
\begin{equation}\label{f hi lo}
g=(-\Delta_\Omega)^{\frac s2}f=(-\Delta_\Omega)^{\frac s2}P_{\leq 1}^\Omega f + (-\Delta_\Omega)^{\frac s2-k} P_{>1}^\Omega \bigl[(-\Delta_\Omega)^kf\bigr]
\end{equation}
and then invoke Theorem~\ref{T:Mikhlin}.

We begin by proving that $\| S(g) \|_{L^p} \lesssim \|g\|_{L^p}$.  First observe that
$$
N^{s} (\tpo_N)^k (-\ld)^{-\frac s2}  = m\bigl(\tfrac1N \sqrt{-\Delta_\Omega}\,\bigr) \qtq{with} m(\lambda) := \lambda^{-s} e^{-k\lambda^2} [1-e^{-3\lambda^2}]^k
$$
and moreover, $| \lambda^\ell \partial^\ell m (\lambda) | \lesssim_\ell \lambda^{2k-s} e^{-k\lambda^2}$ for all integers $\ell\geq 1$.  As a consequence, we
see that the multiplier
$$
m_\epsilon(\lambda) := \sum \epsilon_N m\bigl(\tfrac\lambda N\bigr) \qtq{obeys} | \lambda^\ell \partial^\ell m_\epsilon (\lambda) | \lesssim_\ell 1,
$$
uniformly in the choice of signs $\{\epsilon_N\}\subseteq\{\pm1\}$.  (Notice the summability relies on the restriction $s<2k$.)

Now consider statistically independent random signs $\epsilon_N$, each taking the values $\pm1$ with equal probability.  Applying Khintchine's inequality, Fubini, and then Theorem~\ref{T:Mikhlin}, we obtain
\begin{align*}
\int_\Omega \bigl|S(g)(x)\bigr|^p\,dx
\lesssim \int_\Omega \E\bigl\{ \bigl| [m_\epsilon g](x) \bigr|^p \bigr\} \,dx = \E \, \bigl\| [m_\epsilon g](x) \bigr\|_{\lpo}^p \lesssim \|g\|_{\lpo}^p.
\end{align*}
This proves $\| S(g) \|_{L^p} \lesssim \|g\|_{L^p}$.

We now consider the reverse inequality, which is proved via duality.  Define
\begin{align*}
\tilde m (\lambda) := \biggl( \sum_{N\in \tz} \bigl[ m\bigl(\tfrac\lambda N\bigr) \bigr]^2 \biggr)^{-1} ,
\end{align*}
where $m$ is the multiplier defined above.  It is not difficult to check that $\tilde m$ obeys the hypotheses of Theorem~\ref{T:Mikhlin} and so defines a bounded multiplier.
From Cauchy-Schwarz, the upper bound on the square function proved above, and this observation, we have
\begin{align*}
|\langle g ,\; h \rangle| &= \biggl| \sum_{N\in\tz} \Bigl\langle g, \;
        m\bigl(\tfrac1N \sqrt{-\Delta_\Omega}\,\bigr)^2 \tilde m\bigl(\sqrt{-\Delta_\Omega}\,\bigr) h \Bigr\rangle \biggr| \\
&= \biggl| \sum_{N\in\tz} \Bigl\langle m\bigl(\tfrac1N \sqrt{-\Delta_\Omega}\,\bigr) g, \;
        m\bigl(\tfrac1N \sqrt{-\Delta_\Omega}\,\bigr) \tilde m\bigl(\sqrt{-\Delta_\Omega}\,\bigr) h \Bigr\rangle \biggr| \\
&\leq \Bigl\langle S(g), \;  S\bigl(\tilde m\bigl(\sqrt{-\Delta_\Omega}\,\bigr) h\bigr) \Bigr\rangle \\
&\lesssim \bigl\| S(g) \bigr\|_{L^p} \bigl\| \tilde m\bigl(\sqrt{-\Delta_\Omega}\,\bigr) h \bigr\|_{L^{p'}} \\
&\lesssim \bigl\| S(g) \bigr\|_{L^p} \bigl\| h \bigr\|_{L^{p'}}
\end{align*}
The inequality $\|g\|_{L^p} \lesssim \| S(g) \|_{L^p}$ now follows by optimizing over $h\in L^{p'}$.
\end{proof}

The last result for this section will be used in Section~\ref{SS:equiv} to deduce Corollary~\ref{C:Riesz} from Theorem~\ref{T:main}.

\begin{lemma}\label{L:density}
For $1<p<\infty$ and $s<1+\frac1p$, $\{(-\Delta_\Omega)^{s/2} f : f\in C^\infty_c(\Omega)\}$ is dense in $L^p(\Omega)$. 
\end{lemma}

\begin{proof}
Fix $g\in L^p(\Omega)$ and $\eps>0$.  By Lemma~\ref{L:ident} there are $t_2>t_1>0$ so that
$$
\| g - (e^{2t_1\Delta_\Omega} - e^{2t_2\Delta_\Omega} ) g \|_{L^p(\Omega)} < \eps,
$$
which we may rewrite as
$$
\bigl\| g - (-\Delta_\Omega)^{s/2} (e^{t_1\Delta_\Omega} + e^{t_2\Delta_\Omega} ) (-\Delta_\Omega)^{-s/2} (e^{t_1\Delta_\Omega} - e^{t_2\Delta_\Omega} ) g \bigr\|_{L^p(\Omega)} < \eps.
$$
By Theorem~\ref{T:Mikhlin}, both
$$
(-\Delta_\Omega)^{s/2} (e^{t_1\Delta_\Omega} + e^{t_2\Delta_\Omega} ) \qtq{and} (-\Delta_\Omega)^{-s/2} (e^{t_1\Delta_\Omega} - e^{t_2\Delta_\Omega} )
$$
are bounded operators on $L^p(\Omega)$.  Correspondingly, there exists $k\in C^\infty_c(\Omega)$ so that
$$
\bigl\| g - (-\Delta_\Omega)^{s/2} (e^{t_1\Delta_\Omega} + e^{t_2\Delta_\Omega} ) k \bigr\|_{L^p(\Omega)} < 2\eps.
$$
In this way, it remains only to show that there exists $f\in C^\infty_c(\Omega)$ so that
$$
\bigl\| (-\Delta_\Omega)^{s/2} \bigl[ h - f \bigr] \bigr\|_{L^p(\Omega)} < \eps \qtq{where} h =(e^{t_1\Delta_\Omega} + e^{t_2\Delta_\Omega} ) k.
$$

By parabolic regularity, $h$ is smooth; we will construct $f$ by introducing a smooth cutoff.  This requires decay of $h$ near the boundary of $\Omega$
and at infinity; specifically, from Theorem~\ref{T:heat} we have $|h(x)|\lesssim \dist(x,\partial\Omega)$ and $|\Delta_\Omega h(x)|+ |h(x)|\lesssim_N |x|^{-N}$ for any $N$.
We will also use that these estimates imply $|\nabla h(x)|\lesssim_N |x|^{-N}$.

Let us suppose $0\in\Omega^c$.  Given $0<r\ll 1$ and $R\gg 1$ we may choose a cutoff function $\phi(x)$ which vanishes on the dilates $(1+r)\Omega^c$ and $2R\Omega$
and is equal to unity on $R\Omega^c \setminus (1+2r)\Omega$; moreover we may arrange that
$$
| \partial^\alpha_x \phi(x) | \lesssim_\alpha \dist(x,\partial\Omega)^{-|\alpha|}
$$
for all multi-indices $\alpha$.  Combining this with our prior information we deduce that
\begin{align*}
\bigl\| (-\Delta_\Omega)^{s/2} \bigl[ h - \phi h\bigr] \bigr\|_{L^p(\Omega)}
&\lesssim \bigl\| (1- \phi) h \bigr\|_{L^p(\Omega)}^{1-s/2} \bigl\| -\Delta_\Omega \bigl[(1- \phi) h\bigr] \bigr\|_{L^p(\Omega)}^{s/2} \\
&\lesssim_N \bigl( r^{1+\frac1p} + R^{-N} R^\frac{d}{p} \bigr)^{1-s/2} \bigl( r^{-1+\frac1p} + R^{-N} R^\frac{d}{p} \bigr)^{s/2}.
\end{align*}
As $s<1+\frac1p$, this can be made arbitrarily small by choosing $N>d/p$ and then $R$ large and $r$ small.  This completes the proof.
\end{proof}


\section{Riesz potential estimates and Hardy inequalities}\label{SS:Hardy}

The goal of this section is the proof of Hardy's inequality in exterior domains with respect to both the Euclidean Laplacian and the Dirichlet Laplacian; see Lemmas~\ref{L:Hardy Laplacian} and~\ref{L:Hardy Dirichlet} below.  As a stepping stone, we prove estimates on Riesz potentials, that is, the integral kernels associated to $(-\Delta_\Omega)^{-s/2}$; see Lemma~\ref{L:Riesz}.

The Riesz potentials are positive, as are the weights in Hardy's inequality.  Schur's test provides a powerful tool for determining the boundedness of integral operators with sign-definite kernels.  We will make use of the following formulation:

\begin{lemma}[Schur's test with weights]\label{L:Schur}
Suppose $(X, d\mu)$ and $(Y, d\nu)$ are measure spaces and let $w(x,y)$ be a positive measurable function defined on $X\times Y$.  Let $K(x, y):X\times Y\to \mathbb C$ satisfy
\begin{align}
\sup_{x\in X}\int_Y w(x,y)^{\frac 1p} |K(x,y)|\,d\nu(y)&=C_0<\infty,\label{E:Schur hyp1}\\
\sup_{y\in Y}\int_X w(x,y)^{-\frac1{p'}} |K(x,y)|\,d\mu(x)&=C_1<\infty,\label{E:Schur hyp2}
\end{align}
for some $1<p<\infty$. Then the operator defined by
\begin{align*}
Tf(x)=\int_{Y}K(x,y) f(y)\,d\nu(y)
\end{align*}
is a bounded operator from $L^p(Y, d\nu)$ to $L^p(X, d\mu)$. In particular,
\begin{align*}
\|Tf\|_{L^p(X,d\mu)}\lesssim C_0^{\frac 1{p'}}C_1^{\frac 1{p}}\|f\|_{L^p(Y,d\nu)}.
\end{align*}
\end{lemma}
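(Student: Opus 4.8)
The plan is to prove the weighted Schur's test by a direct application of Hölder's inequality, splitting the kernel $|K(x,y)|$ into two factors weighted by appropriate powers of $w(x,y)$. Specifically, I would write
$$
|K(x,y)| = \bigl( w(x,y)^{\frac1p} |K(x,y)| \bigr)^{\frac1{p'}} \bigl( w(x,y)^{-\frac1{p'}} |K(x,y)| \bigr)^{\frac1p},
$$
which one checks by adding exponents: the power of $w$ is $\frac1p\cdot\frac1{p'} - \frac1{p'}\cdot\frac1p = 0$, and the power of $|K|$ is $\frac1{p'}+\frac1p = 1$. This is the natural way to set up Schur's test so that hypothesis \eqref{E:Schur hyp1} controls one factor after integrating in $y$ and hypothesis \eqref{E:Schur hyp2} controls the other after integrating in $x$.

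Next I would estimate $|Tf(x)|$ pointwise. Using the factorization above together with Hölder's inequality in the $y$ integral (with exponents $p'$ and $p$), I get
$$
|Tf(x)| \le \int_Y w(x,y)^{\frac1{pp'}}|K(x,y)|^{\frac1{p'}} \cdot w(x,y)^{-\frac1{pp'}}|K(x,y)|^{\frac1p}|f(y)|\,d\nu(y)
\le \Bigl( \int_Y w(x,y)^{\frac1p}|K(x,y)|\,d\nu(y)\Bigr)^{\frac1{p'}} \Bigl( \int_Y w(x,y)^{-\frac1{p'}}|K(x,y)|\,|f(y)|^p\,d\nu(y)\Bigr)^{\frac1p}.
$$
By hypothesis \eqref{E:Schur hyp1}, the first factor is at most $C_0^{1/p'}$, so $|Tf(x)|^p \le C_0^{p/p'} \int_Y w(x,y)^{-1/p'}|K(x,y)|\,|f(y)|^p\,d\nu(y)$.

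Finally I would integrate this in $x$ over $X$, apply Tonelli's theorem to swap the order of integration (the integrand is non-negative), and invoke hypothesis \eqref{E:Schur hyp2} on the inner integral:
$$
\|Tf\|_{L^p(X,d\mu)}^p \le C_0^{p/p'} \int_Y |f(y)|^p \Bigl( \int_X w(x,y)^{-\frac1{p'}}|K(x,y)|\,d\mu(x) \Bigr) d\nu(y) \le C_0^{p/p'}\, C_1 \,\|f\|_{L^p(Y,d\nu)}^p.
$$
Taking $p$-th roots gives the claimed bound $\|Tf\|_{L^p(X,d\mu)} \le C_0^{1/p'}C_1^{1/p}\|f\|_{L^p(Y,d\nu)}$. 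I do not anticipate a genuine obstacle here; the only points requiring a little care are the bookkeeping of the exponents in the kernel factorization and making sure the use of Hölder and Tonelli is justified (non-negativity of integrands handles the latter, and one may reduce to $f \ge 0$ at the outset without loss of generality). This is the standard weighted generalization of Schur's test and the argument is entirely routine.
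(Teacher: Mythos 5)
Your proof is correct and is essentially the paper's argument: the same factorization $|K|=(w^{1/p}|K|)^{1/p'}(w^{-1/p'}|K|)^{1/p}$ and a single application of H\"older's inequality, with hypothesis \eqref{E:Schur hyp1} controlling the $y$-integration and \eqref{E:Schur hyp2} the $x$-integration, yielding the stated constant $C_0^{1/p'}C_1^{1/p}$. The only cosmetic difference is that you bound $|Tf(x)|$ pointwise and then integrate in $x$ via Tonelli, whereas the paper applies H\"older directly to the bilinear form $\iint K(x,y)f(y)g(x)\,d\nu(y)\,d\mu(x)$ and concludes by duality; the two formulations are interchangeable.
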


\begin{remark}
This is essentially a theorem of Aronszajn.  When $K\geq 0$, Gagliardo has shown that the existence of a weight $w(x,y)=a(x)b(y)$ obeying \eqref{E:Schur hyp1} and \eqref{E:Schur hyp2} is necessary for the $L^p$ boundedness of $T$.  See the paper \cite{Gagliardo} of Gagliardo for further references.
\end{remark}

\begin{proof}
The proof is a simple application of H\"older's inequality.  Indeed, for $f\in L^p(Y,d\nu)$ and $g\in L^{p'}(X, d\mu)$, we have
\begin{align*}
\Bigl|\iint_{X\times Y} & K(x,y) f(y)g(x)\, d\nu(y)\, d\mu(x)  \Bigr|\\
&\lesssim \Bigl(\iint_{X\times Y} w(x,y)^{-\frac 1{p'}} |K(x,y)| |f(y)|^p\, d\nu(y)\, d\mu(x)\Bigr)^{\frac1p}\\
&\qquad\qquad \times\Bigl(\iint_{X\times Y} w(x,y)^{\frac 1p} |K(x,y)| |g(x)|^{p'}\, d\nu(y)\, d\mu(x)\Bigr)^{\frac1{p'}}\\
&\lesssim C_1^{1/p} \|f\|_{L^p(Y,d\nu)} C_0^{1/p'}\|g\|_{L^{p'}(X,d\mu)}.
\end{align*}
This yields the desired conclusion.
\end{proof}

\begin{lemma} [Riesz potentials] \label{L:Riesz}
Let $d\geq 3$ and $0<s<d$. Then the Riesz potential
\begin{align*}
(-\ld)^{-\frac s2}(x,y):=\frac 1{\Gamma(s/2)}\int_0^\infty t^{\frac s2}e^{t\ld}(x,y)\frac {dt}t
\end{align*}
satisfies
\begin{align*}
|(-\ld)^{-\frac s2}(x,y)|\lesssim \frac1{|x-y|^{d-s}}\biggl(\frac{\dist(x,\Omega^c)}{|x-y|\wedge\diam(\Omega^c)}\wedge1\biggr)
        \biggl(\frac{\dist(y,\Omega^c)}{|x-y|\wedge \diam(\Omega^c)}\wedge 1\biggr),
\end{align*}
uniformly for $x,y\in \Omega$.
\end{lemma}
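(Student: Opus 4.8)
The plan is to insert the two-sided heat kernel bound of Theorem~\ref{T:heat} into the defining integral and perform the time integration by hand. Abbreviate $R:=|x-y|$, $D:=\diam(\Omega^c)$, $m:=R\wedge D$, $d_x:=\dist(x,\partial\Omega)=\dist(x,\Omega^c)$, $d_y:=\dist(y,\Omega^c)$, and $A_x:=\frac{d_x}{m}\wedge1$, $A_y:=\frac{d_y}{m}\wedge1$, so that the asserted estimate reads $|(-\ld)^{-s/2}(x,y)|\lesssim R^{s-d}A_xA_y$. By Theorem~\ref{T:heat} it suffices to bound
\begin{equation*}
\mathcal I:=\int_0^\infty t^{s/2}\Bigl[\tfrac{d_x}{\sqrt t\wedge D}\wedge1\Bigr]\Bigl[\tfrac{d_y}{\sqrt t\wedge D}\wedge1\Bigr]t^{-d/2}e^{-cR^2/t}\,\tfrac{dt}t.
\end{equation*}
Convergence of this integral at $t=\infty$, where the two distance factors are bounded by $1$, is precisely where the hypothesis $s<d$ enters; near $t=0$ the Gaussian controls everything (for $x\neq y$), and $d\geq3$ is used only because Theorem~\ref{T:heat} requires it.

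The key elementary point I would isolate first is that
\begin{equation*}
\frac{d_x}{\sqrt t\wedge D}\wedge1\;\le\;A_x\quad\text{if }\sqrt t\ge m,\qquad\text{and}\qquad\frac{d_x}{\sqrt t\wedge D}\wedge1\;\le\;A_x\cdot\frac m{\sqrt t}\quad\text{if }\sqrt t< m,
\end{equation*}
and likewise with $x$ replaced by $y$. The first case holds because $\sqrt t\wedge D\ge m$ (using $m\le D$), so the left side is $\le\frac{d_x}m\wedge1=A_x$. For the second, $\sqrt t<m\le D$ forces $\sqrt t\wedge D=\sqrt t$, and a two-line check — splitting according to whether $d_x\ge m$ or $d_x<m$ — gives $\frac{d_x}{\sqrt t}\wedge1\le A_x\cdot\frac m{\sqrt t}$. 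Inserting these bounds (together with the analogues in $y$) pulls $A_xA_y$ outside the integral and reduces matters to showing
\begin{equation*}
\int_{m^2}^\infty t^{\frac s2-\frac d2}e^{-cR^2/t}\,\frac{dt}t\;+\;m^2\int_0^{m^2}t^{\frac s2-\frac d2-1}e^{-cR^2/t}\,\frac{dt}t\;\lesssim\;R^{s-d}.
\end{equation*}

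To prove this I would substitute $u=R^2/t$ in each integral, noting that $\tfrac{dt}t$ is scale-invariant and $t^{\frac s2-\frac d2}=R^{s-d}u^{\frac{d-s}2}$. The first integral becomes $R^{s-d}\int_0^{R^2/m^2}u^{\frac{d-s}2-1}e^{-cu}\,du\lesssim R^{s-d}$, since $\frac{d-s}2>0$ makes the integrand integrable at the origin and $\int_0^\infty u^{\frac{d-s}2-1}e^{-cu}\,du<\infty$. The second becomes $m^2R^{s-d-2}\int_{R^2/m^2}^\infty u^{\frac{d-s}2}e^{-cu}\,du$; since $R^2/m^2\ge1$ the tail integral is $\lesssim e^{-cR^2/(2m^2)}$, so this contribution is $\lesssim R^{s-d}\cdot\frac{m^2}{R^2}e^{-cR^2/(2m^2)}\lesssim R^{s-d}$ because $m\le R$. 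Adding the two ranges gives $\mathcal I\lesssim R^{s-d}A_xA_y$, which is the claim.

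I do not anticipate a serious obstacle: granted Theorem~\ref{T:heat}, the argument collapses to the one-variable computation above. The only step requiring genuine care is the elementary pointwise estimate on $\frac{d_x}{\sqrt t\wedge D}\wedge1$ in the second display — it is the $\diam(\Omega^c)$-truncation there that produces the $|x-y|\wedge\diam(\Omega^c)$ (rather than just $|x-y|$) in the statement and makes the bound uniform in $x,y$ and in the geometry, and it would be easy to be careless and lose this.
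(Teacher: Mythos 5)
Your proposal is correct: the pointwise inequality $\frac{d_x}{\sqrt t\wedge D}\wedge 1\le A_x$ for $\sqrt t\ge m$ and $\le A_x\,\frac m{\sqrt t}$ for $\sqrt t<m$ checks out (both cases $d_x\ge m$ and $d_x<m$ work as you say), and the two resulting Gamma-type integrals are estimated correctly, with $s<d$ entering exactly where you place it and $m\le R$ absorbing the prefactor $m^2/R^2$ in the short-time piece. The overall strategy is the same as the paper's --- insert the upper bound of Theorem~\ref{T:heat} into the defining time integral --- but the execution differs in a worthwhile way. The paper splits the time integral at $t=\diam(\Omega^c)^2$, rescales the short-time piece via $\tau=c|x-y|^2/t$, and then runs a three-region case analysis (splitting $\tau$ at $|x-y|^2/d(y)^2$ and $|x-y|^2/d(x)^2$ after assuming $d(x)\le d(y)$), with further subcases comparing $d(x)$, $d(y)$, and $|x-y|$; the truncated distance factors are only resolved region by region. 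You instead dominate each truncated factor by $A_x$ (resp. $A_x m/\sqrt t$) once and for all, which pulls the target product $A_xA_y$ out of the integral and collapses the whole estimate to a single one-variable computation with no case analysis on the relative sizes of $d(x)$, $d(y)$, $|x-y|$. Your route is shorter and makes transparent where the $|x-y|\wedge\diam(\Omega^c)$ truncation in the statement comes from; the paper's casework is more mechanical but proves the same bound. No gap.
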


\begin{proof} The proof relies on the heat kernel estimates from Theorem~\ref{T:heat}.  To keep formulas within the margins, we use the shorthands
$d(x):=\dist(x,\Omega^c)$ and $\diam:=\diam(\Omega^c)$.

We start with the long-time contribution $t\geq \diam^2$.  By Theorem~\ref{T:heat},
\begin{align*}
\int_{\diam^2}^\infty t^{\frac s2} e^{t\ld}(x,y) \frac {dt}t
&\lesssim\biggl(\frac{d(x)}{\diam}\wedge1\biggr)\biggl(\frac{d(y)}{\diam}\wedge 1\biggr)\int_{\diam^2}^\infty e^{-\frac{c|x-y|^2}t}t^{-\frac{d-s}2}\frac{dt}t\\
&\lesssim\biggl(\frac{d(x)}{\diam}\wedge1\biggr)\biggl(\frac{d(y)}{\diam}\wedge1\biggr)\frac1{|x-y|^{d-s}}
        \int_0^{\frac{|x-y|^2}{\diam^2}}e^{-c\tau}\tau^{\frac{d-s}2}\frac{d\tau}\tau\\
&\lesssim \biggl(\frac{d(x)}{\diam}\wedge1\biggr)\biggl(\frac{d(y)}{\diam}\wedge 1\biggr)\frac1{|x-y|^{d-s}},
\end{align*}
which is acceptable.

We now turn to the short-time contribution $0<t\leq \diam^2$. Again, by Theorem~\ref{T:heat} we estimate
\begin{align*}
\int_0^{\diam^2} t^{\frac s2} e^{t\ld}(x, & y) \frac{dt} t
\lesssim\int_0^{\diam^2}\biggl(\frac{d(x)}{\sqrt t}\wedge1\biggr)\biggl(\frac{d(y)}{\sqrt t}\wedge 1\biggr)e^{-\frac{c|x-y|^2}t}t^{-\frac{d-s}2}  \frac{dt}t \notag\\
&\lesssim \frac1{|x-y|^{d-s}}\int_{\frac{|x-y|^2}{\diam^2}}^\infty\biggl(\frac{d(x)\sqrt\tau}{|x-y|}\wedge1\biggr)\biggl(\frac{d(y)\sqrt\tau}{|x-y|}\wedge 1\biggr)
        e^{-c\tau}\tau^{\frac{d-s}2} \frac{d\tau}{\tau},
\end{align*}
and so it remains to show that
\begin{align}\label{short}
\int_{\frac{|x-y|^2}{\diam^2}}^\infty\biggl(\frac{d(x)\sqrt\tau}{|x-y|}\wedge1\biggr)&\biggl(\frac{d(y)\sqrt\tau}{|x-y|}\wedge 1\biggr)
        e^{-c\tau}\tau^{\frac{d-s}2} \frac{d\tau}{\tau}\notag\\
&\lesssim \biggl(\frac{d(x)}{|x-y|\wedge\diam}\wedge1\biggr)\biggl(\frac{d(y)}{|x-y|\wedge \diam}\wedge 1\biggr).
\end{align}
By symmetry, we may assume $d(x)\le d(y)$.  To continue, we split the integral in \eqref{short} into three parts:
\begin{align*}
\int_{\frac{|x-y|^2}{\diam^2}}^{\infty}
&=\int_{\frac{|x-y|^2}{\diam^2}}^{\frac{|x-y|^2}{d(y)^2}}+\int_{\frac{|x-y|^2}{d(y)^2}}^{\frac{|x-y|^2}{d(x)^2}}+\int_{\frac{|x-y|^2}{d(x)^2}}^\infty:=I+II+III.
\end{align*}

We begin with the contribution of $III$.  For these values of $\tau$ we have $1\leq \frac{d(x)\sqrt\tau}{|x-y|}\leq \frac{d(y)\sqrt\tau}{|x-y|}$, and so
\begin{align*}
III=\int_{\frac{|x-y|^2}{d(x)^2}}^\infty e^{-c\tau}\tau^{\frac{d-s}2}\frac{d\tau}\tau
\lesssim e^{-\frac{c|x-y|^2}{2d(x)^2}}
&\lesssim\biggl(\frac{d(x)}{|x-y|}\wedge 1\biggr)^2 \lesssim \text{RHS\eqref{short}}.
\end{align*}

Next, we consider the contribution of $I$.  On this region, we have $\frac{d(x)\sqrt\tau}{|x-y|}\leq \frac{d(y)\sqrt\tau}{|x-y|}\leq 1$.
The analysis of this term breaks into two additional cases: $|x-y|<d(y)$ and $|x-y|\geq d(y)$.  If $|x-y|<d(y)$, then
\begin{align*}
I=\frac{d(x)d(y)}{|x-y|^2}\int_{\frac{|x-y|^2}{\diam^2}}^{\frac{|x-y|^2}{d(y)^2}}e^{-c\tau}\tau^{\frac{d-s}2}\,d\tau
&\lesssim \frac{d(x)d(y)}{|x-y|^2}\biggl(\frac{|x-y|}{d(y)}\biggr)^{d-s+2}\\
&\lesssim \frac{d(x)}{d(y)} \lesssim \frac{d(x)}{|x-y|}\wedge1\lesssim \text{RHS\eqref{short}}.
\end{align*}
If instead $d(y)\le |x-y|$, we simply estimate
\begin{align*}
I&\lesssim \frac{d(x)d(y)}{|x-y|^2}
\lesssim\biggl(\frac{d(x)}{|x-y|}\wedge 1\biggr)\biggl(\frac{d(y)}{|x-y|}\wedge1\biggr)
\lesssim \text{RHS\eqref{short}}.
\end{align*}

Finally, we consider the contribution of $II$.  For these values of $\tau$ we have $\frac{d(x)\sqrt\tau}{|x-y|}\leq 1\leq \frac{d(y)\sqrt\tau}{|x-y|}$, and so
\begin{align*}
II=\frac{d(x)}{|x-y|}\int_{\frac{|x-y|^2}{d(y)^2}}^{\frac{|x-y|^2}{d(x)^2}} e^{-c\tau} \tau^{\frac{d-s+1}2}\frac{d\tau}\tau.
\end{align*}
We discuss three cases. When $|x-y|\le d(x)\le d(y)$, we obtain
\begin{align*}
II&\lesssim \frac{d(x)}{|x-y|}\biggl(\frac{|x-y|}{d(x)}\biggr)^{d-s+1}\lesssim 1\lesssim \text{RHS\eqref{short}}.
\end{align*}
When $d(x)\le |x-y|\le d(y)$, we simply have
\begin{align*}
II\lesssim \frac{d(x)}{|x-y|}\lesssim \text{RHS\eqref{short}}.
\end{align*}
In the remaining case, $d(x)\le d(y)\le |x-y|$, we estimate
\begin{align*}
II&\lesssim \frac{d(x)}{|x-y|} e^{-\frac{c|x-y|^2}{2d(y)^2}}\lesssim \frac{d(x)d(y)}{|x-y|^2}
\lesssim \text{RHS\eqref{short}}.
\end{align*}

This completes the proof of the lemma.
\end{proof}

\begin{lemma}[Hardy inequality for $\Delta_{\R^d}$]\label{L:Hardy Laplacian}
Fix $d\geq 3$, $1<p<\infty$, and $0<s<\frac dp$. Then for any $f\in C_c^{\infty}(\Omega)$, we have
\begin{align*}
\biggl\|\frac{f(x)}{\dist(x,\Omega^c)^s}\biggr\|_{\lpo}\lesssim \||\nabla|^s f\|_{\lpr}.
\end{align*}
\end{lemma}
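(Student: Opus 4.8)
The plan is to control $f$ far from the obstacle by the classical (point) Hardy inequality, and near the smooth compact boundary by a Hardy inequality for the half-space. Write $g:=|\nabla|^s f$; since $0<s<d$ the Riesz kernel $|x|^{s-d}$ is locally integrable and $f=(-\Delta_{\R^d})^{-s/2}g$ is the convolution of $g$ with a constant multiple of $|x|^{s-d}$ (and $g\in\lpr$ because $f\in C^\infty_c$). Fix $x_0$ in the interior of $\Omega^c$ and $R>0$ with $\Omega^c\subset B(x_0,R/2)$, and split $\Omega$ into the far region $\Omega_{\mathrm{far}}=\{x\in\Omega:|x-x_0|>2R\}$, a thin collar $\Omega_{\mathrm{col}}=\{x\in\Omega:\dist(x,\partial\Omega)<\rho_0\}$ with $\rho_0$ a small fixed number below the tubular-neighbourhood radius of $\partial\Omega$, and a bounded remainder $\Omega_{\mathrm{rem}}$ on which $\dist(x,\Omega^c)$ is bounded above and below.

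On $\Omega_{\mathrm{far}}$ one has $\dist(x,\Omega^c)\ge|x-x_0|-R/2\ge\tfrac12|x-x_0|$, so this piece is dominated by $\bigl\||x-x_0|^{-s}f\bigr\|_{\lpr}$; I would bound this by $\|g\|_{\lpr}$ directly from Schur's test (Lemma~\ref{L:Schur}) applied to the positive kernel $|x-x_0|^{-s}|x-y|^{s-d}$ with weight $w(x,y)=(|x-x_0|/|y-x_0|)^{ap}$, so that hypotheses \eqref{E:Schur hyp1}--\eqref{E:Schur hyp2} reduce to the convergence of two elementary homogeneous integrals, which holds as soon as $s<a<d$ and $\tfrac p{p'}\,a+s<d$; such an $a$ exists precisely when $sp<d$. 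The piece $\Omega_{\mathrm{rem}}$ is immediate: it has finite measure, $\dist(x,\Omega^c)^{-s}$ is bounded there, and Hölder's inequality together with the Sobolev embedding $\dot H^{s,p}(\R^d)\hookrightarrow L^q(\R^d)$ with $\tfrac1q=\tfrac1p-\tfrac sd$ (legitimate exactly for $0<s<d/p$) finishes it.

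For the collar, smoothness and compactness of $\partial\Omega$ let me take a finite atlas and build, via normal coordinates, finitely many smooth diffeomorphisms $\Psi_m$ that flatten the corresponding pieces of $\partial\Omega$ onto a hyperplane and satisfy $(\Psi_m x)_d=\dist(x,\partial\Omega)$ (the signed distance function is a valid normal coordinate near a smooth boundary); with a subordinate partition of unity $\{\chi_m\}$ this reduces the collar estimate, after changing variables, to the half-space Hardy inequality
\[
\bigl\|x_d^{-s}v\bigr\|_{L^p(\HH)}\lesssim\bigl\||\nabla|^s v\bigr\|_{\lpr},\qquad v\in C^\infty_c(\HH),\quad\HH=\{x_d>0\},
\]
applied to $v_m=(\chi_m f)\circ\Psi_m^{-1}$, provided $\|\chi_m f\|_{H^{s,p}(\R^d)}\lesssim\||\nabla|^s f\|_{\lpr}$. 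The latter follows from the standard boundedness on Bessel potential spaces of multiplication by a smooth compactly supported function and of composition with a smooth diffeomorphism, together with the Sobolev embedding above to absorb the resulting bounded-support $L^p$-term (so $s<d/p$ re-enters here). The half-space inequality itself I would obtain by first treating integer $s=k$ — slice in $x_d$, apply the one-dimensional $k$-th order Hardy inequality $\int_0^\infty t^{-kp}|h(t)|^p\,dt\lesssim\int_0^\infty|h^{(k)}(t)|^p\,dt$ (valid for $1<p<\infty$ since $v$ vanishes near $\{x_d=0\}$), and bound $\|\partial_d^k v\|_{\lpr}\lesssim\||\nabla|^k v\|_{\lpr}$ by standard Mikhlin multiplier estimates — and then filling in fractional $s$ by complex (Stein) interpolation: the analytic family $v\mapsto x_d^{-k-\zeta}v$ is bounded $\dot H^{k,p}\to L^p(\HH)$ on $\Re\zeta=0$ and $\dot H^{k+1,p}\to L^p(\HH)$ on $\Re\zeta=1$, and $[\dot H^{k,p},\dot H^{k+1,p}]_\theta=\dot H^{k+\theta,p}$.

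The principal obstacle is the half-space inequality together with the somewhat fussy bookkeeping needed to descend to it from the curved boundary with constants uniform in $f$; note that the half-space inequality needs no restriction on $s$, so the hypothesis $s<d/p$ is dictated entirely by the far-field analysis — the point-Hardy inequality and the Sobolev embedding. Finally, I would remark that this lemma uses only that the obstacle is compact with smooth boundary; convexity plays no role here.
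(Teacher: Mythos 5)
Your far-field and remainder regions are fine: the Schur-test derivation of the classical point Hardy inequality (with the weight $(|x-x_0|/|y-x_0|)^{ap}$) is exactly the computation the paper itself points to in Region~IIa of Lemma~\ref{L:Hardy Dirichlet}, and the H\"older/Sobolev treatment of the bounded remainder is harmless; likewise the localization $\|\chi_m f\|_{H^{s,p}(\R^d)}\lesssim\||\nabla|^s f\|_{\lpr}$ and the boundary flattening are standard and parallel what the paper does with the fractional product rule. The genuine gap is in the step you yourself flag as the principal obstacle: your proof of the half-space inequality $\|x_d^{-s}v\|_{L^p(\HH)}\lesssim\||\nabla|^s v\|_{\lpr}$ for $v\in C^\infty_c(\HH)$ by Stein interpolation. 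The endpoint assertion ``$v\mapsto x_d^{-k-\zeta}v$ is bounded $\dot H^{k,p}\to L^p(\HH)$ on $\Re\zeta=0$'' is false on the full space $\dot H^{k,p}(\R^d)$: multiplication by $x_d^{-k}\mathbf 1_{\{x_d>0\}}$ is unbounded there (take any $v\in C^\infty_c(\R^d)$ not vanishing on $\{x_d=0\}$). The integer Hardy bound holds only on the subspace of functions supported in $\HH$, so what interpolation can give you is boundedness of $T_\theta$ on the complex interpolation space of the \emph{closures of $C^\infty_c(\HH)$} in $\dot H^{k,p}$ and $\dot H^{k+1,p}$. To conclude you would then need, for $v\in C^\infty_c(\HH)$, that this interpolation norm is controlled by $\|v\|_{\dot H^{k+\theta,p}(\R^d)}$ --- i.e.\ the identification of $[\,\cdot_{00}^{k,p},\cdot_{00}^{k+1,p}]_\theta$ with the ``$00$''-space of order $k+\theta$ and the coincidence of the latter with the plain closure. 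This is precisely the Lions--Magenes phenomenon discussed in the paper's introduction: the identification fails when $s-\frac1p\in\Z$, and away from those values its proof normally \emph{uses} the Hardy inequality you are trying to prove, so the argument is circular. Note that the exceptional value $s=\frac1p$ always lies inside the range $0<s<\frac dp$ of the lemma (and $s=1+\frac1p$ does too when $p<d-1$), so you cannot simply discard these cases.

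By contrast, the paper does not attempt to reprove the near-boundary Hardy inequality at all: it cuts off with a fixed bump $\phi$, observes that $\phi f$ lives in the smooth bounded domain $\Omega\cap B(0,3\diam(\Omega^c))$, and quotes the sharp Hardy inequality for such domains from Triebel \cite[\S I.5.7]{Triebel:struc}, reducing everything else to the fractional product rule and the classical $\R^d$ Hardy inequality. So your outline is salvageable if you replace the interpolation step by a citation of (or a genuine proof of) the sharp boundary Hardy inequality for the relevant $F$-scale; as written, that step does not go through.
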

\begin{proof}
The proof will be a simple application of Hardy's inequalities for bounded domains and for $\R^d$.  By translation, we may assume that $0\in\Omega^c$.
We will use the abbreviations $d(x)=\dist(x,\Omega^c)$ and $\diam=\diam(\Omega^c)$.

Let $\phi(x)\in C_c^{\infty}(\R^d)$ be a smooth bump function such that $\phi(x)=1$ for $x\in B(0, 2\diam)$ and
$\phi(x)=0$ if $x\in B(0, 3\diam)^c$. Notice that for $x\in \supp(1-\phi)$, we have $d(x)\sim |x|$.

We decompose $f(x)=\phi(x)f(x) + [1-\phi(x)]f(x)$ and treat the two pieces separately, beginning with the first.  Observe that $\phi f$ is supported in the
smooth bounded domain $U:=\Omega\cap B(0, 3\diam)$.  Therefore, by the Hardy inequality for such domains (cf. \cite[\S I.5.7]{Triebel:struc}) we obtain
\begin{align}\label{part1}
\biggl\|\frac{\phi(x)f(x)}{d(x)^s}\biggr\|_{\lpo}&\lesssim \biggl\|\frac{\phi(x)f(x)}{\dist(x, \partial U)^s}\biggr\|
    \lesssim \bigl\||\nabla|^s (\phi f)\bigr\|_{\lpr}.
\end{align}

When $0<s<1$, the fractional product rule (cf. \cite{ChW:fractional chain rule,Taylor:book}) and Sobolev embedding allow us to deduce
\begin{equation}\label{part 1a}
\begin{aligned}
\bigl\||\nabla|^s(\phi f) \bigr\|_{\lpr}
& \lesssim \|\phi\|_{L^\infty(\R^d)} \bigl\| |\nabla|^s f\bigr\|_{\lpr} +  \bigl\||\nabla|^s \phi\bigr\|_{L^{\frac ds}(\R^d)} \|f\|_{L^{\frac{pd}{d-sp}}(\R^d)}\\
&\lesssim \bigl\||\nabla|^s f\bigr\|_{\lpr}.
\end{aligned}
\end{equation}
Combining this with \eqref{part1} gives the needed bound on $\phi f$ for $0<s<1$.  When $s\geq1$ we write $s=k+\eps$ with $k$ an integer and $0\leq \eps<1$. Using
boundedness of Riesz transforms and the regular (pointwise) product rule, we have
\begin{align*}
\bigl\||\nabla|^s(\phi f) \bigr\|_{\lpr} & \lesssim
    \sum_{|\alpha|+|\beta| = k} \bigl\| |\nabla|^\eps \bigl[(\partial^\beta \phi)(\partial^\alpha f)\bigr]\bigr\|_{\lpr}.
\end{align*}
Using the fractional product rule together with Sobolev embedding and H\"older's inequality in much the same manner as before yields
$$
\bigl\| |\nabla|^s(\phi f) \bigr\|_{\lpr} \lesssim \bigl\||\nabla|^s f\bigr\|_{\lpr}.
$$
Combining this with \eqref{part1} provides the requisite estimate on $\phi f$.

To bound $[1-\phi] f$, we use the classical Hardy inequality on the whole of $\R^d$; this requires $s<\frac dp$.   (There are many proofs of this inequality;
our treatment of Region~IIa in the proof of Lemma~\ref{L:Hardy Dirichlet} shows how it can be deduced via Lemma~\ref{L:Schur}.)
Noting that $d(x)\sim |x|$ we have
\begin{align*}
\biggl\|\frac{[1-\phi(x)]f(x)}{d(x)^s}\biggr\|_{\lpo} &\lesssim \biggl\|\frac{[1-\phi(x)]f(x)}{|x|^s}\biggr\|_{\lpr} \\
    &\lesssim \bigl\| |\nabla|^s \bigl( [1-\phi]f\bigr)\bigr\|_{\lpr} \\
    &\lesssim \bigl\| |\nabla|^s f\bigr\|_{\lpr} + \bigl\| |\nabla|^s (\phi f)\bigr\|_{\lpr}.
\end{align*}
Combining this with the $L^p$ estimate on $|\nabla|^s(\phi f)$ discussed previously completes the proof of the lemma.
\end{proof}

\begin{lemma}[Hardy inequality for $\ld$]\label{L:Hardy Dirichlet}
Let $d\geq 3$, $1<p<\infty$, and $0<s<\min\{1+\frac1p,\frac dp\}$. Then for any $f\in
C_c^{\infty}(\Omega)$, we have
\begin{align*}
\biggl\|\frac {f(x)}{d(x)^s}\biggr\|_{\lpo}\lesssim \|(-\ld)^{\frac s2}f\|_{\lpo},
\end{align*}
where $d(x)=\dist(x, \Omega^c)$.
\end{lemma}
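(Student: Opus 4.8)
The plan is to dualize the inequality and then verify it by a weighted Schur test. Given $f\in C_c^\infty(\Omega)$, set $g:=(-\ld)^{s/2}f$; by \eqref{f hi lo} and Theorem~\ref{T:Mikhlin} we have $g\in\lpo$, and since $0<s<\tfrac dp\le d$, Lemma~\ref{L:Riesz} expresses $f=(-\ld)^{-s/2}g$ through a pointwise-controlled kernel. Thus it suffices to show that the integral operator $T$ with kernel
$$
K(x,y):=\frac{1}{d(x)^s}\,(-\ld)^{-s/2}(x,y)
$$
is bounded on $\lpo$; by Lemma~\ref{L:Riesz}, writing $\diam=\diam(\Omega^c)$,
$$
|K(x,y)|\lesssim \frac{1}{d(x)^s\,|x-y|^{d-s}}\Bigl(\frac{d(x)}{|x-y|\wedge\diam}\wedge1\Bigr)\Bigl(\frac{d(y)}{|x-y|\wedge\diam}\wedge1\Bigr).
$$

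I would decompose $\Omega\times\Omega$ into a finite family of regions determined by the relative sizes of $d(x)$, $d(y)$, $|x-y|$ and $\diam$ — equivalently, by which alternative is active in each of the two minima and whether $|x-y|\gtrless\diam$ — and, for each region $R$, bound the operator with kernel $K(x,y)\mathbf 1_R(x,y)$ on $\lpo$ using Schur's test (Lemma~\ref{L:Schur}) with an $R$-adapted positive weight; summing the finitely many pieces then gives the result. On the ``far'' regions $|x-y|\gtrsim\diam$ where both minima equal $1$ (so that $d(x)\sim d(y)\sim|x|$), the kernel is dominated by $|x|^{-s}|x-y|^{s-d}$, the kernel of the classical Hardy inequality on $\R^d$; here Schur's test with the homogeneous weight $w(x,y)=(|x|/|y|)^{d}$ closes precisely when $s<\tfrac dp$, and this recovers the Euclidean Hardy inequality invoked in Lemma~\ref{L:Hardy Laplacian}. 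On the ``near'' regions $|x-y|\lesssim\diam$ on which both minima still equal $1$ — which forces $d(x)\sim d(y)\gtrsim|x-y|$ — one has $|K(x,y)|\lesssim d(x)^{-s}|x-y|^{s-d}$ supported where $|x-y|\lesssim d(x)$, and Schur's test with $w\equiv1$ closes using only $s>0$.

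The crux is the family of ``near'' regions ($|x-y|\lesssim\diam$) on which at least one minimum contributes a genuine fraction; a representative case is $d(x)\le d(y)\le|x-y|\le\diam$, where
$$
|K(x,y)|\lesssim \frac{d(x)^{1-s}\,d(y)}{|x-y|^{d-s+2}}.
$$
Here I would run Schur's test with a weight $w(x,y)=\bigl(d(x)/d(y)\bigr)^{a}$ and evaluate the two defining integrals by dyadic decomposition in $|x-y|$ and in $d(x)$ (respectively $d(y)$), using that, by the smoothness of $\partial\Omega$, $\bigl|\{x\in\Omega:\,|x-y|\sim r,\ d(x)\sim t\}\bigr|\sim r^{d-1}t$ for $0<t\le r$. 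A direct computation then shows that \eqref{E:Schur hyp1} and \eqref{E:Schur hyp2} hold — uniformly in $x$ and in $y$ respectively — exactly when $p(s-1)<a<p'(2-s)$, and such an exponent $a$ exists if and only if $p(s-1)<p'(2-s)$, which rearranges to $s<1+\tfrac1p$. The remaining ``near'' sub-regions (such as $d(x)\le|x-y|\le d(y)$ and $d(y)\le|x-y|\le d(x)$, together with their $\diam$-analogues) are handled by the same scheme — a power weight in $d(x)/d(y)$ or $d(x)/|x-y|$ and the same dyadic bookkeeping — and impose no condition stronger than $s<1+\tfrac1p$. I expect this last step to be the main obstacle: it is simultaneously where the hypothesis $s<1+\tfrac1p$ is genuinely forced and where the bookkeeping of cases and dyadic sums must be carried out with care. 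Assembling the finitely many regional estimates completes the proof.
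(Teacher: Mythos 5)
Your proposal follows essentially the same route as the paper's proof: reduce via Lemma~\ref{L:Riesz} to the $L^p(\Omega)$-boundedness of the kernel $d(x)^{-s}(-\Delta_\Omega)^{-s/2}(x,y)$, split $\Omega\times\Omega$ according to the relative sizes of $d(x)$, $d(y)$, $|x-y|$, and $\diam(\Omega^c)$, and close each region with the weighted Schur test, with $s<1+\tfrac1p$ forced by the near-boundary regions and $s<\tfrac dp$ by the far regions. The only deviations are cosmetic choices of weights (e.g.\ $(d(x)/d(y))^a$ versus the paper's $(d(x)/|x-y|)^\alpha$, and fixing $\alpha=d$ in the far region), plus the harmless slip that on the far region one has $d(y)\sim|y|$ rather than $\sim|x|$, none of which affects the argument.
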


\begin{proof}
The claim follows from the following assertion, which we prove below:
\begin{align}\label{E:H2}
\biggl\|\frac1{d(x)^s}(-\Delta_\Omega)^{-\frac s2} g\biggr\|_{\lpo}\lesssim \|g\|_{\lpo} \qtq{for all} g\in L^p(\Omega).
\end{align}
Indeed, one merely has to apply this to $g=(-\Delta_\Omega)^{\frac s2}f$ with $f\in C^\infty_c(\Omega)$; such $g$ do indeed belong to $L^p(\Omega)$, as was
shown in the proof of Theorem~\ref{T:sq}.

By Lemma~\ref{L:Riesz}, to prove \eqref{E:H2} it suffices to show that the kernel
\begin{align*}
K(x,y):=\frac 1{d(x)^s}\frac 1{|x-y|^{d-s}}\biggl(\frac{d(x)}{|x-y|\wedge \diam}\wedge 1\biggr)\biggl(\frac{d(y)}{|x-y|\wedge \diam}\wedge 1\biggr)
\end{align*}
defines a bounded operator on $L^p(\Omega)$; here we again use the shorthand $\diam=\diam(\Omega^c)$.
Towards this goal, we subdivide $\Omega\times\Omega$ into several regions; in each of these regions, $L^p$ boundedness will be obtained via an application of Lemma~\ref{L:Schur} with a suitably chosen weight.  To begin, we subdivide into two main regions: $|x-y|\le \diam $ and $|x-y|> \diam$.

\textbf{Region I:} $|x-y|\le \diam$.  On this region, the kernel becomes
\begin{align*}
K(x,y)=\frac 1{d(x)^s}\frac1{|x-y|^{d-s}}\biggl(\frac{d(x)}{|x-y|}\wedge 1\biggr)\biggl(\frac{d(y)}{|x-y|}\wedge 1\biggr).
\end{align*}
To analyze this kernel, we further subdivide into four regions.

\textbf{Region Ia:} $|x-y|\le d(x)\wedge d(y)$.  Here, the kernel simplifies to
$$
K(x,y)=\frac1{d(x)^s|x-y|^{d-s}}
$$
and we also have
\begin{align*}
d(x)\le d(y)+|x-y|\le 2 d(y) \quad\text{and}\quad d(y)\le d(x)+|x-y|\le 2d(x).
\end{align*}
An easy computation then shows that
\begin{align*}
\int_{|x-y|\le d(x)}K(x,y)\,dy + \int_{|x-y|\le d(y)}K(x,y)\,dx\lesssim 1,
\end{align*}
and so $L^p$ boundedness on this region follows immediately from Lemma~\ref{L:Schur}.

\textbf{Region Ib:} $d(y)\le |x-y|\le d(x)$. On this region, the kernel takes the form
$$
K(x,y)=\frac{d(y)}{d(x)^s|x-y|^{d+1-s}}
$$
and it is easy to check that
\begin{align*}
\int_{d(y)\le |x-y|\le d(x)} K(x,y) \,dy&\lesssim \frac1{d(x)^s}\int_{|x-y|\le d(x)}\frac 1{|x-y|^{d-s}}\, dy\lesssim 1\\
\int_{d(y)\le |x-y|\le d(x)}K(x,y)\, dx&\lesssim d(y)\int_{|x-y|\ge d(y)}\frac 1{|x-y|^{d+1}}\, dx\lesssim 1.
\end{align*}
$L^p$ boundedness on this region follows again from an application of Lemma~\ref{L:Schur}.

\textbf{Region Ic:} $d(x)\le |x-y|\le d(y)$.  On this region, the kernel becomes
\begin{align*}
K(x,y)=\frac{d(x)^{1-s}}{|x-y|^{d+1-s}}
\end{align*}
and we have
\begin{align*}
|x-y|\le d(y)\le |x-y|+d(x)\le 2|x-y|.
\end{align*}
To prove $L^p$ boundedness on this region we use Lemma~\ref{L:Schur} with weight given by
\begin{align*}
w(x,y)=\biggl(\frac{d(x)}{|x-y|}\biggr)^{\alpha}\quad\text{with}\quad p(s-1)<\alpha<p'(2-s).
\end{align*}
The assumption $s<1+\frac1p$ guarantees that it is possible to chose such an $\alpha$.

That hypothesis \eqref{E:Schur hyp1} is satisfied in this setting follows from
\begin{align*}
\int_{d(x)\le |x-y|\le d(y)}w(x,y)^{\frac 1p} K(x,y) \,dy
&\lesssim \int_{|x-y|\ge d(x)}\frac{d(x)^{1-s+ \frac{\alpha}p}}{|x-y|^{d+1-s+\frac{\alpha}p}} \,dy\lesssim 1,
\end{align*}
while hypothesis \eqref{E:Schur hyp2} is deduced from
\begin{align*}
\int_{d(x)\le |x-y|\le d(y)}&w(x,y)^{-\frac 1{p'}}K(x,y) \,dx\\
&\lesssim \int_{\frac12 d(y)\leq|x-y|\leq d(y)}\frac {d(x)^{1-s-\frac\alpha{p'}}}{|x-y|^{d+1-s-\frac{\alpha}{p'}}} \,dx\\
&\lesssim d(y)^{-(d+1-s-\frac{\alpha}{p'})}\int_0^{d(y)}r^{1-s-\frac{\alpha}{p'}}d(y)^{d-1} \, dr\\
&\lesssim 1.
\end{align*}

\textbf{Region Id:} $d(x)\vee d(y)\le |x-y|$.  On this region, the kernel takes the form
\begin{align*}
K(x,y)=\frac{d(x)^{1-s} d(y)}{|x-y|^{d+2-s}}.
\end{align*}
We use again Lemma~\ref{L:Schur} with weight given by
$$
w(x,y)=\biggl(\frac{d(x)}{|x-y|}\biggr)^\alpha\quad\text{with}\quad p(s-1)<\alpha<p'(2-s).
$$
To verify hypothesis \eqref{E:Schur hyp1} in this setting, we estimate
\begin{align*}
\int_{d(x)\vee d(y)\le |x-y|}w(x,y)^{\frac 1p} K(x,y)\, dy
&\lesssim \int_{|x-y|\ge d(x)}\frac {d(x)^{1-s+\frac{\alpha}p}}{|x-y|^{d+1-s+\frac{\alpha}p}}\, dy \lesssim 1.
\end{align*}
Hypothesis \eqref{E:Schur hyp2} follows from
\begin{align*}
\int_{d(x)\vee d(y)\le |x-y|}& w(x,y)^{-\frac 1{p'}}K(x,y)\,dx\\
&\lesssim d(y)\int_{d(x)\vee d(y)\le |x-y|}\frac{d(x)^{1-s-\frac{\alpha}{p'}}}{|x-y|^{d+2-s-\frac{\alpha}{p'}}}\,dx\\
&\lesssim d(y)\sum_{R\ge d(y)}\frac 1{R^{d+2-s-\frac{\alpha}{p'}}}\int_0^{2R} r^{1-s-\frac\alpha{p'}}R^{d-1} dr\\
&\lesssim 1.
\end{align*}
In the display above, the sum is over $R\in2^\Z$.

We turn now to the contribution from the second main region.

\textbf{Region II:} $|x-y|>\diam$.  On this region, the kernel takes the form
\begin{align*}
K(x,y)=\frac 1{d(x)^s}\frac1{|x-y|^{d-s}}\biggl(\frac{d(x)}{\diam}\wedge 1\biggr)\biggl(\frac{d(y)}{\diam}\wedge 1\biggr).
\end{align*}
Without loss of generality, we may assume that the spatial origin is the centroid of $\Omega^c$.  To continue, we further subdivide into four regions.

\textbf{Region IIa:} $\diam\le d(x)\wedge d(y)$.   On this region we have
\begin{align*}
d(x)\sim |x| \quad \text{and} \quad \ d(y)\sim |y|
\end{align*}
and the kernel simplifies to
\begin{align*}
K(x,y)=\frac 1{d(x)^s|x-y|^{d-s}}\lesssim \frac 1{|x|^s|x-y|^{d-s}}.
\end{align*}

To prove $L^p$ boundedness on this region, we apply Lemma~\ref{L:Schur} with weight
$$
w(x,y)=\biggl(\frac{|x|}{|y|}\biggr)^{\alpha} \quad\text{with}\quad ps<\alpha< p'(d-s)\wedge pd.
$$

To verify hypothesis \eqref{E:Schur hyp1}, we estimate
\begin{align}\label{RIIa}
\int_{\diam\le d(x)\wedge d(y)}w(x,y)^{\frac 1p}K(x,y) \,dy\lesssim\int_{\R^d}\frac{|x|^{\frac\alpha{p}-s}}{|x-y|^{d-s}|y|^{\frac{\alpha}p}}\,dy.
\end{align}
Using that for the value of $\alpha$ chosen, $\frac\alpha{p}<d$, we estimate the contribution of the region $\{y\in \R^d:\, |y|\le 2|x|\}$ to the left-hand side of \eqref{RIIa} as follows:
\begin{align*}
\int_{|y|\le 2|x|}\frac{|x|^{\frac\alpha{p}-s}}{|x-y|^{d-s}|y|^{\frac{\alpha}p}}\, dy
&\lesssim \int_{|y|\le 2|x|}\frac {dy}{|x-y|^{d-\frac\alpha{p}}|y|^{\frac\alpha{p}}}  +\int_{|y|\le 2|x|}\frac {dy}{|x-y|^{d-s}|y|^s}
\lesssim 1,
\end{align*}
where in order to obtain the last inequality we consider separately the cases $|y|\leq \frac{|x|}2$ and $\frac{|x|}2<|y|\leq 2|x|$; in the former case note that $|x-y|\geq \frac{|x|}2$, while in the latter we have $|x-y|\leq 3|x|$.

To estimate the contribution of the region $\{y\in \R^d:\,|y|>2|x|\}$ to the left-hand side of \eqref{RIIa}, we use the fact that in this case, $|x-y|\sim |y|$; we obtain
\begin{align*}
\int_{|y|>2|x|}\frac{|x|^{\frac\alpha{p}-s}}{|x-y|^{d-s}|y|^{\frac{\alpha}p}}\,dy
&\lesssim |x|^{\frac\alpha p-s}\int_{|y|>2|x|}\frac1{|y|^{d-s+\frac\alpha{p}}} \,dy
\lesssim 1.
\end{align*}

Next, we verify that hypothesis \eqref{E:Schur hyp2} is satisfied in this case.  We have
\begin{align}\label{RIIa'}
\int_{\diam\le d(x)\wedge d(y)}w(x,y)^{-\frac 1{p'}}K(x,y) \,dx\lesssim\int_{\R^d}\frac{|y|^{\frac\alpha{p'}}}{|x|^{s+\frac\alpha{p'}}|x-y|^{d-s}}\, dx.
\end{align}
Using that for the value of $\alpha$ chosen, $s+\frac{\alpha}{p'}<d$, we estimate the contribution of the region $\{x\in \R^d:\, |x|\le 2|y|\}$ to the left-hand side of \eqref{RIIa'} as follows:
\begin{align*}
\int_{|x|\le 2|y|}&\frac{|y|^{\frac\alpha{p'}}}{|x|^{s+\frac\alpha{p'}}|x-y|^{d-s}}\, dx\\
&\qquad \lesssim \int_{|x|\le 2|y|}\frac{dx}{|x|^s|x-y|^{d-s}} + \int_{|x|\le 2|y|}\frac{dx}{|x|^{s+\frac\alpha{p'}}|x-y|^{d-s-\frac\alpha{p'}}}
\lesssim 1,
\end{align*}
where in order to obtain the last inequality we consider separately the cases $|x|\leq \frac{|y|}2$ and $\frac{|y|}2<|x|\leq 2|y|$.

Finally, to estimate the contribution of the region $\{x\in \R^d:\, |x|>2|y|\}$ to the left-hand side of \eqref{RIIa'}, we use the fact that in this case, $|x-y|\sim |x|$ to obtain
\begin{align*}
\int_{|x|>2|y|}\frac{|y|^{\frac\alpha{p'}}}{|x|^{s+\frac\alpha{p'}}|x-y|^{d-s}}\, dx
\lesssim |y|^{\frac \alpha{p'}}\int_{|x|>2|y|}\frac1{|x|^{d+\frac\alpha{p'}}} \,dx\lesssim 1.
\end{align*}

\textbf{Region IIb:} $d(y)\le \diam\le d(x)$.  On this region, $d(x)\sim |x|$, and so
\begin{align*}
K(x,y)=\frac{d(y)}{d(x)^s|x-y|^{d-s}\diam}\lesssim \frac 1{|x|^s|x-y|^{d-s}}.
\end{align*}
This upper bound is precisely the kernel considered in Region IIa.  The arguments given there show $L^p$ boundedness on this region.

\textbf{Region IIc:} $d(x)\le \diam \le d(y)$.  On this region, the kernel takes the form
\begin{align*}
K(x,y)=\frac {d(x)^{1-s}}{|x-y|^{d-s}\diam}.
\end{align*}
and we have
$$
|x|\sim \diam \quad \text{and}\quad |y|\sim d(y).
$$
Moreover, because on Region II we have $|x-y|>\diam$, we also obtain
$$
|y|\leq |x-y|+|x|\lesssim |x-y|+\diam\lesssim |x-y|.
$$

To obtain $L^p$ boundedness on this region, we apply Lemma~\ref{L:Schur} with weight
$$
w(x,y)=\frac{\diam^{\alpha_1} d(x)^{\alpha_2}}{|y|^\alpha}
$$
with $ps<\alpha=\alpha_1+\alpha_2<p'(d-s)$, $\alpha_1<p$, and $\alpha_2<p'(2-s)$.

We start by verifying hypothesis \eqref{E:Schur hyp1} in this setting:
\begin{align*}
\int_{d(x)\le \diam \le d(y)}w(x,y)^{\frac 1p}K(x,y) \,dy
&\lesssim \int_{d(x)\le \diam \le d(y)} \frac{\diam^{\frac{\alpha_1}p-1} d(x)^{1-s+\frac{\alpha_2}p}}{|y|^{\frac{\alpha}p}|y-x|^{d-s}}\,dy\\
&\lesssim {\diam}^{\frac{\alpha_1}p-1} d(x)^{1-s+\frac{\alpha_2}p} \int_{|y|\gtrsim d(x)}\frac1{|y|^{d-s+\frac\alpha{p}}}\,dy\\
&\lesssim {\diam}^{\frac{\alpha_1}p-1} d(x)^{1-\frac{\alpha_1}p}\\
&\lesssim 1.
\end{align*}

Next, we verify that hypothesis \eqref{E:Schur hyp2} holds in this setting:
\begin{align*}
\int_{d(x)\le \diam \le d(y)}w(x,y)^{-\frac 1{p'}}K(x,y) \,dx
&\lesssim \int_{d(x)\le \diam}\frac{d(x)^{1-s-\frac{\alpha_2}{p'}}|y|^{\frac\alpha{p'}}}{\diam^{1+\frac{\alpha_1}{p'}}|x-y|^{d-s}}\, dx\\
&\lesssim \int_{d(x)\le \diam}\frac{d(x)^{1-s-\frac{\alpha_2}{p'}}}{\diam^{1+\frac{\alpha_1}{p'}}|x-y|^{d-s-\frac{\alpha}{p'}}}\, dx\\
&\lesssim \int_{d(x)\le \diam}\frac{d(x)^{1-s-\frac{\alpha_2}{p'}}}{\diam^{d+1-s-\frac{\alpha_2}{p'}}}\, dx\\
&\lesssim {\diam}^{-(2-s-\frac{\alpha_2}{p'})} \int_0^{\diam} r^{{1-s-\frac{\alpha_2}{p'}}}\,dr\\
&\lesssim 1.
\end{align*}

\textbf{Region IId:} $d(x)\vee d(y)\le \diam$. On this region, we have $|x|\sim\diam$, $|y|\sim \diam$, and
$$
\diam<|x-y|\leq d(x)+d(y)+\diam\leq 3\diam.
$$
The kernel takes the form
\begin{align*}
K(x,y)=\frac{d(x)^{1-s}d(y)}{|x-y|^{d-s}\diam^2}\lesssim \frac{d(x)^{1-s}}{\diam^{d+1-s}}.
\end{align*}
On this region we apply Lemma~\ref{L:Schur} with weight given by
$$
w(x,y)=\biggl(\frac{d(x)}{|y|}\biggr)^\alpha \quad \text{with}\quad p(s-1)<\alpha<p'(2-s).
$$
To verify hypothesis \eqref{E:Schur hyp1} in this case, we simply estimate
\begin{align*}
\int_{d(x)\vee d(y)\le \diam}w(x,y)^{\frac 1p}K(x,y) \,dy
&\lesssim \int_{|y|\sim \diam}\frac{d(x)^{1-s+\frac\alpha{p}}}{|y|^{\frac\alpha{p}}\diam^{d+1-s}}\,dy
\lesssim 1.
\end{align*}
Similarly, that hypothesis \eqref{E:Schur hyp2} holds in this case follows from
\begin{align*}
\int_{d(x)\vee d(y)\le \diam}w(x,y)^{-\frac 1{p'}}K(x,y) \,dx
&\lesssim \int_{d(x)\le \diam}\frac{|y|^{\frac\alpha{p'}}d(x)^{1-s-\frac\alpha{p'}}}{\diam^{d+1-s}}\, dx\\
&\lesssim {\diam}^{\frac\alpha{p'}-(d+1-s)}\int_{d(x)\le \diam}d(x)^{1-s-\frac\alpha{p'}}\, dx\\
&\lesssim {\diam}^{-(2-s-\frac\alpha{p'})}\int_0^{\diam}r^{1-s-\frac\alpha{p'}}\, dr\\
&\lesssim 1.
\end{align*}

Putting everything together, we deduce that $K$ is the kernel of a bounded operator on $L^p(\Omega)$.  This completes the proof of Lemma~\ref{L:Hardy Dirichlet}.
\end{proof}


\section{Equivalence of Sobolev spaces}\label{SS:equiv}

In this section we prove Theorem~\ref{T:main}.  As discussed in the introduction, our approach to proving this theorem is to estimate the difference between the square functions discussed in Theorem~\ref{T:sq}, specifically, the square functions related to the heat kernel.  As a first step, we estimate the difference between the Littlewood--Paley projections.

\begin{lemma}\label{L:kernel} For an integer $k\geq 1$, let $K_N^k(x,y):=\bigl[(\tilde P_N)^k- (\tpo_N)^k\bigr](x,y)$. Then there
exists $c=c(k)>0$ such that
\begin{align*}
|K_N^k(x,y)|\lesssim_k N^d e^{-cN^2[d(x)^2+d(y)^2+|x-y|^2]},
\end{align*}
uniformly for $x\in \R^d$ and $y\in \Omega$.
\end{lemma}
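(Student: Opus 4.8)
The plan is to reduce the statement to a pointwise bound on the difference of a single pair of heat kernels and then to exploit the convexity of $\Omega^c$ through a comparison with a half-space. Expanding the binomial,
$$
(\tilde P_N)^k = \sum_{j=0}^k \binom kj (-1)^j e^{(k+3j)\Delta_{\R^d}/N^2}, \qquad (\tpo_N)^k = \sum_{j=0}^k \binom kj (-1)^j e^{(k+3j)\Delta_{\Omega}/N^2},
$$
with the convention that $e^{t\Delta_\Omega}(\cdot,\cdot)$ is extended by zero off $\Omega\times\Omega$. Thus $K_N^k(x,y)$ is a fixed linear combination (depending only on $k$) of differences $e^{t\Delta_{\R^d}}(x,y) - e^{t\Delta_\Omega}(x,y)$ with $t = (k+3j)N^{-2} \sim_k N^{-2}$. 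Since the target estimate is invariant under the parabolic rescaling $t\mapsto \lambda^2 t$, $x\mapsto\lambda x$, $y\mapsto\lambda y$, it suffices to prove that, writing $d(x):=\dist(x,\Omega^c)$,
$$
\bigl| e^{t\Delta_{\R^d}}(x,y) - e^{t\Delta_\Omega}(x,y) \bigr| \lesssim t^{-d/2}\exp\Bigl( -c\,\tfrac{d(x)^2 + d(y)^2 + |x-y|^2}{t} \Bigr)
$$
for every $t>0$, $x\in\R^d$, and $y\in\Omega$.

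First I would dispose of the case $x\in\Omega^c$: there the left-hand side is simply $e^{t\Delta_{\R^d}}(x,y)\lesssim t^{-d/2}e^{-|x-y|^2/4t}$, and since $d(x)=0$ and $d(y)\le|x-y|$ this is already of the required form. So assume $x,y\in\Omega$ and set $w(x):=e^{t\Delta_{\R^d}}(x,y) - e^{t\Delta_\Omega}(x,y)$. By positivity and domain monotonicity of Dirichlet heat kernels (consequences of the maximum principle, as in \eqref{E:crude heat bound}), $0\le w(x)\le e^{t\Delta_{\R^d}}(x,y)$; moreover $w$ is symmetric in $x$ and $y$, so I may assume $d(x)\le d(y)$. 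If $|x-y|\ge\tfrac14 d(y)$, then $d(x)^2+d(y)^2+|x-y|^2\lesssim|x-y|^2$, and the crude bound $w(x)\le e^{t\Delta_{\R^d}}(x,y)\lesssim t^{-d/2}e^{-|x-y|^2/4t}$ already suffices.

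The remaining case is $|x-y|<\tfrac14 d(y)$, in which $d(x)\ge d(y)-|x-y|>\tfrac34 d(y)$, so that $d(x)\sim d(y)$ and $|x-y|<\tfrac13 d(x)$. Here I would bring in the half-space. Let $z^*\in\partial\Omega$ be the nearest point of $\partial\Omega$ to $x$ (unique, by convexity and smoothness of $\partial\Omega$), let $\nu$ be the inward unit normal at $z^*$, so $x-z^* = d(x)\nu$, and put $H:=\{w\in\R^d:(w-z^*)\cdot\nu>0\}$. Since $\partial H$ is a supporting hyperplane of the convex body $\Omega^c$ at $z^*$, we have $\Omega^c\cap H=\emptyset$, i.e.\ $H\subset\Omega$. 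One checks that $\delta_x:=\dist(x,\partial H)=(x-z^*)\cdot\nu=d(x)$ and $\delta_y:=(y-z^*)\cdot\nu\ge d(x)-|x-y|>\tfrac23 d(x)>0$, so that $y\in H$ as well. Domain monotonicity $(H\subset\Omega)$ together with the explicit half-space heat kernel then gives
$$
w(x) \le e^{t\Delta_{\R^d}}(x,y) - e^{t\Delta_H}(x,y) = (4\pi t)^{-d/2} e^{-|x-\bar y|^2/4t},
$$
where $\bar y = y - 2\delta_y\nu$ is the reflection of $y$ across $\partial H$; the elementary identity $|x-\bar y|^2 = |x-y|^2 + 4\delta_x\delta_y$ and the lower bound $\delta_x\delta_y>\tfrac23 d(x)^2$ yield $w(x)\lesssim t^{-d/2}e^{-(|x-y|^2 + \frac83 d(x)^2)/4t}$, which dominates the claimed bound because $d(x)\sim d(y)$.

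The one point genuinely needing care is the geometric setup in the last paragraph: the half-space comparison produces decay only through the product $\delta_x\delta_y$, where $\delta_y$ is measured to a hyperplane depending on $x$, and this quantity degenerates precisely when $|x-y|$ is comparable to $d(x)$ — which is exactly the regime already covered by the crude Gaussian bound. Arranging that the two cases overlap (here via the thresholds $\tfrac14 d(y)$ and $\tfrac13 d(x)$) is the crux. Everything else — positivity and monotonicity of Dirichlet heat kernels, uniqueness of the nearest-point projection onto a convex body, the reflection identity, and recombining the finitely many scales $t\sim_k N^{-2}$ — is routine.
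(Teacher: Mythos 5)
Your proof is correct, and it reaches the kernel bound by a genuinely lighter route than the paper. The paper writes $(\tilde P_N)^k-(\tpo_N)^k$ as a telescoping sum $\sum_\ell (\tilde P_N)^\ell\,[\tilde P_N-\tpo_N]\,(\tpo_N)^{k-\ell-1}$, bounds each factor's kernel by a Gaussian (the middle one via the same maximum-principle comparison with a half-space, but reflecting across the supporting hyperplane at the point of $\Omega^c$ nearest to $y$, which yields $3|x-\bar y|\geq d(x)+d(y)+|x-y|$ in all cases at once), and then recombines by convolving Gaussians over $\R^d\times\Omega^c$ and $\R^d\times\Omega$. You instead collapse the powers outright via the semigroup law (the binomial expansion into $e^{(k+3j)\Delta/N^2}$), so that $K_N^k$ is a finite linear combination of \emph{single-time} differences $e^{t\Delta_{\R^d}}(x,y)-e^{t\Delta_\Omega}(x,y)$ with $t\sim_k N^{-2}$; this eliminates the Gaussian-convolution bookkeeping entirely. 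The price is exactly the delicate point you flag: reflecting across the hyperplane anchored at the projection of $x$ only produces decay through $\delta_x\delta_y$, and $\delta_y$ can degenerate, so you need the dichotomy $|x-y|\gtrless\tfrac14 d(y)$ (the degenerate regime being absorbed by the crude Gaussian bound); your thresholds do make the cases overlap correctly, and the geometric facts you invoke (uniqueness of the projection onto the convex body, the supporting-hyperplane property $\Omega^c\cap H=\emptyset$, the reflection identity $|x-\bar y|^2=|x-y|^2+4\delta_x\delta_y$, and domain monotonicity $e^{t\Delta_H}\le e^{t\Delta_\Omega}\le e^{t\Delta_{\R^d}}$) are all valid. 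Two cosmetic remarks: the appeal to parabolic rescaling invariance is unnecessary (and not literally true, since rescaling changes $\Omega$) — simply substituting $t=(k+3j)N^{-2}$ gives the claim with $c(k)\sim c/k$; and in the case $x\notin\Omega$ you implicitly include $x\in\partial\Omega$, which is consistent with the zero-extension convention. Neither affects correctness.
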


\begin{proof} We write
\begin{align}\label{k0}
K_N^k(x,y)&=\bigl[e^{\Delta/N^2}-e^{4\Delta/N^2}\bigr]^{k}(x,y)-\bigl[e^{\ld/N^2}-e^{4\ld/N^2}\bigr]^{k}(x,y)\notag\\
&=\biggl\{\sum_{\ell=0}^{k-1}\bigl[e^{\Delta/N^2}-e^{4\Delta/N^2}\bigr]^{\ell}\bigl[e^{\Delta/N^2}-e^{\ld/N^2}-e^{4\Delta/N^2}+e^{4\ld/N^2}\bigr]\\
&\qquad \qquad\cdot \bigl[e^{\ld/N^2}-e^{4\ld/N^2}\bigr]^{k-\ell-1}\biggr\}(x,y).\notag
\end{align}
To proceed, we will estimate the kernels of each of the three factors appearing in the formula above.

Using the crude heat kernel estimate \eqref{E:crude heat bound}, a simple exercise with Gaussian integrals yields
\begin{align}\label{k1}
\sup_{0\le \ell\le k-1}\biggl|\bigl[e^{\ld/N^2}-e^{4\ld/N^2}\bigr]^\ell(x,y)\biggr|&+\biggl|\bigl[e^{\Delta/N^2}-e^{4\Delta/N^2}\bigr]^\ell(x,y)\biggr|\notag\\
&\lesssim_k N^d e^{-c_1N^2|x-y|^2}
\end{align}
for some $c_1=c_1(k)>0$.

We turn now to estimating the kernel of $e^{t\Delta}-e^{t\ld}$.  When $y\notin \Omega$, we have
\begin{align}\label{k2}
0\le [e^{t\Delta}-e^{t\ld}](x,y)=e^{t\Delta}(x,y) \lesssim t^{-\frac d2} e^{-\frac{|x-y|^2}{4t}}.
\end{align}

Consider now $y\in \Omega$.  As the obstacle $\Omega^c$ is convex, there exists a halfspace $\mathbb H_y\subset \Omega$ containing $y$ such that
\begin{align*}
\dist(y, \Omega^c)=\dist(y, \mathbb H_y^c).
\end{align*}
By the maximum principle, the heat kernel associated to the Dirichlet Laplacian on $\HH_y$ is positive and pointwise majorized by the heat kernel on $\Omega$, that is,
$$
0\leq e^{t\Delta_{\mathbb H_y}}(x,y)\leq  e^{t\ld}(x,y).
$$
For  $e^{t\Delta_{\mathbb H_y}}$ we have the exact formula
\begin{align*}
e^{t\Delta_{\mathbb H_y}}(x,y)=
\begin{cases}
e^{t\Delta}(x,y)-e^{t\Delta}(x,\bar y),& \mbox{ if } x\in \mathbb H_y,\\
0,&\mbox{ if } x\notin \mathbb H_y,
\end{cases}
\end{align*}
where $\bar y$ denotes the reflection of $y$ in the hyperplane $\partial \mathbb H_y$.  Thus, for $y\in \Omega$ we have
\begin{align}\label{k3}
0\le\bigl[e^{t\Delta}-e^{t\ld}\bigr](x,y)\le\bigl[e^{t\Delta}-e^{t\Delta_{\HH_y}}\bigr](x,y)
&=\begin{cases}
e^{t\Delta}(x,\bar y), & \mbox{ if } x\in \mathbb H_y\\
e^{t\Delta}(x,y), & \mbox{ if } x\notin \mathbb H_y
\end{cases}\notag\\
&\lesssim t^{-\frac d2} \exp \{-\tfrac{ d(x)^2+d(y)^2+|x-y|^2}{100t}\}.
\end{align}
To derive the last inequality, we argue as follows:  If $x\in \mathbb H_y$, then $x$ and $\bar y$ are on opposite sides of $\partial\HH_y$; thus $|x-\bar y|\geq |x-y|$, $|x-\bar y|\geq d(y)$, and
$$
|x-\bar y|\geq \bigl| x-\tfrac{y+\bar y}2\bigr|\geq d(x).
$$
Therefore $3|x-\bar y|\geq d(x)+d(y)+|x-y|$.  If instead $x\notin \mathbb H_y$, then $x$ and $y$ are on opposite sides of $\partial\HH_y$; thus $ |x-y|\ge d(y)$ and
$$
d(x)\leq d(y)+|x-y| \leq 2|x-y|.
$$
Therefore, $4|x-y|\geq d(x)+d(y)+|x-y|$.

Combining \eqref{k0}, \eqref{k1}, \eqref{k2}, and \eqref{k3}, for $x\in \R^d$ and $y\in \Omega$ we estimate
\begin{align}
|K_N^k(x,y)|
&\lesssim \iint_{\R^d\times\Omega^c}N^{3d}e^{-c_2N^2|x-x'|^2-c_2N^2|x'-y'|^2-c_2N^2|y-y'|^2}\,dx'\,dy'\notag\\
&+\iint_{\R^d\times\Omega} N^{3d}e^{-c_2N^2|x-x'|^2-c_2N^2[d(x')^2+d(y')^2+|x'-y'|^2]-c_2N^2|y'-y|^2} \,dx'\,dy'\label{int2}
\end{align}
for some $0<c_2\leq \min\{c_1,\frac1{100}\}$.

We now estimate the first integral appearing in \eqref{int2}.  As $y\in \Omega$, $|y-y'|\geq d(y)$. Also,
\begin{align*}
d(x)\leq |x-x'|+|x'-y'|+|y'-y|+d(y).
\end{align*}
Thus, we can bound the first integral by
\begin{align*}
N^{3d}e^{-c_3N^2[d(x)^2+d(y)^2]}&\iint_{\R^d\times\Omega}e^{-c_3N^2(|x-x'|^2+|x'-y'|^2+|y'-y|^2)}\,dx'\,dy'\\
&\lesssim N^d e^{-cN^2[d(x)^2+d(y)^2+|x-y|^2]}.
\end{align*}

To estimate the second integral in \eqref{int2}, we argue similarly and use $d(x)\le d(x')+|x-x'|$ and $d(y)\le d(y')+|y-y'|$ to obtain the bound
\begin{align*}
\iint_{\R^d\times\Omega}& N^{3d} e^{-c_2N^2|x-x'|^2-c_2N^2[d(x')^2+d(y')^2+|x'-y'|^2]-c_2N^2|y-y'|^2}\,dx'\,dy'\\
&\lesssim N^d e^{-cN^2[d(x)^2+d(y)^2+|x-y|^2]}.
\end{align*}

This completes the proof of the lemma.
\end{proof}

We are now ready to estimate the difference of the square functions.

\begin{proposition}\label{P:diff}  Fix  $d\geq 3$, $1<p<\infty$, and $s>0$. Then for any $f\in C_c^{\infty}(\Omega)$, we have
\begin{align*}
\biggl\|\biggl(\sum_{N\in 2^{\Z}}N^{2s}\bigl|(\tilde P_N)^kf\bigr|^2\biggr)^{\frac 12}
    -\biggl(\sum_{N\in 2^{\Z}}N^{2s}\bigl|(\pon)^kf\bigr|^2\biggr)^{\frac 12}\biggr\|_{L^p(\R^d)}
\lesssim_k \biggl\|\frac{f(x)}{d(x)^s}\biggr\|_{L^p(\Omega)}
\end{align*}
for any integer $k\geq 1$.
\end{proposition}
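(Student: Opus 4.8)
The plan is to reduce the estimate, via the pointwise kernel bound of Lemma~\ref{L:kernel} and a randomisation argument, to a single weighted Schur test of the kind carried out in the proof of Lemma~\ref{L:Hardy Dirichlet}.

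\emph{Step 1: passing to the difference.} By the reverse triangle inequality for the $\ell^2$ norm, the left-hand side is dominated pointwise by $\bigl(\sum_{N\in\tz}N^{2s}|D_N f|^2\bigr)^{1/2}$, where $D_N:=(\tilde P_N)^k-(\pon)^k$ has integral kernel $K_N^k(x,y)$ as in Lemma~\ref{L:kernel}. Thus it suffices to bound $\bigl\|\bigl(\sum_{N}N^{2s}|D_N f|^2\bigr)^{1/2}\bigr\|_{L^p(\R^d)}$ by $\|f/d^s\|_{L^p(\Omega)}$. After truncating to finite sums $N\in[2^{-M},2^M]$ (so that all manipulations are legitimate), I would apply Khintchine's inequality and Fubini's theorem exactly as in the proof of Theorem~\ref{T:sq}; this reduces matters to the uniform-in-signs bound
\begin{equation*}
\Bigl\|\sum_{N}\eps_N N^s D_N f\Bigr\|_{L^p(\R^d)}\;\lesssim_k\;\bigl\|f/d^s\bigr\|_{L^p(\Omega)},\qquad \eps_N\in\{\pm1\},
\end{equation*}
the implicit constant being independent of $M$ and of the choice of signs.

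\emph{Step 2: estimating the kernel.} Put $g:=f/d^s$. The operator $g\mapsto\sum_N\eps_N N^s D_N(d^s g)$ has kernel $\sum_N\eps_N N^s K_N^k(x,y)\,d(y)^s$; by Lemma~\ref{L:kernel} (with its constant $c=c(k)$) and the elementary estimate $\sum_{N\in\tz}N^{s+d}e^{-cN^2\rho^2}\lesssim_{s,d}\rho^{-(s+d)}$, applied with $\rho^2:=d(x)^2+d(y)^2+|x-y|^2$, this kernel is bounded in absolute value, uniformly in the signs, by
\begin{equation*}
\tilde W(x,y):=\frac{d(y)^s}{\bigl(d(x)^2+d(y)^2+|x-y|^2\bigr)^{(s+d)/2}},\qquad x\in\R^d,\ y\in\Omega.
\end{equation*}
So the entire proposition reduces to showing that the positive operator with kernel $\tilde W$ is bounded from $L^p(\Omega)$ to $L^p(\R^d)$ for every $1<p<\infty$.

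\emph{Step 3: the weighted Schur test, and the main obstacle.} This is where the real work lies; I would model it on the proof of Lemma~\ref{L:Hardy Dirichlet}. One applies the weighted Schur test, Lemma~\ref{L:Schur}, after subdividing $\R^d\times\Omega$ into regions according to the size of $|x-y|$ relative to $\diam(\Omega^c)$ and to the relative sizes of $d(x)$, $d(y)$, and $|x-y|$; on each region one selects a weight of the form $w(x,y)=\bigl(d(x)/(d(x)+|x-y|)\bigr)^{\alpha}$, supplemented on the regions far from the obstacle by a factor comparing $|x|$ with $|y|$, with $\alpha=\alpha(p,s)$ to be optimised. A single weight must accomplish two competing tasks, and checking that one can be found is the crux. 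On the one hand $\tilde W$ is genuinely singular along $\{x=y\}\cap\partial\Omega$ --- indeed $\int_\Omega\tilde W(x,y)\,dy=\infty$ for $x\in\partial\Omega$, just as for the kernel treated in Lemma~\ref{L:Hardy Dirichlet} --- so the weight must regularise the $y$-integration there. On the other hand $\tilde W$ decays only like $|x-y|^{-d}$ at spatial infinity, so the weight must supply extra decay; this is where compactness of $\Omega^c$ enters, as it forces $d(y)\sim|y|$ for large $|y|$. The hypothesis $s>0$ is exactly what makes the scheme close: it is needed both for the convergence of the sum over $N$ in Step 2 and, for example, to guarantee $\sup_{y\in\Omega}\int_{\R^d}\tilde W(x,y)\,dx<\infty$. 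No upper restriction on $s$ is ever imposed, in agreement with the statement. The main obstacle, then, is the construction of the Schur weight in the region where $x$ and $y$ are simultaneously close to $\partial\Omega$ and to each other --- precisely where $\tilde W$ is most singular.
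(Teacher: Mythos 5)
Your Steps 1 and 2 match the paper's reduction almost verbatim: after the reverse triangle inequality one is left with the square function of $D_N=(\tilde P_N)^k-(\pon)^k$, and Lemma~\ref{L:kernel} plus the dyadic summation $\sum_{N\in\tz}N^{s+d}e^{-cN^2\rho^2}\lesssim \rho^{-(s+d)}$ reduces everything to the $L^p(\Omega)\to L^p(\R^d)$ boundedness of the kernel $\tilde W(x,y)=d(y)^s[d(x)^2+d(y)^2+|x-y|^2]^{-(d+s)/2}$. (The Khintchine randomisation is superfluous: since you bound the signed kernel by $\sum_N N^s|K_N^k(x,y)|$ anyway, the paper gets the same reduction by simply dominating the $\ell^2$ sum by the $\ell^1$ sum and applying Minkowski.) The genuine gap is Step 3, which you yourself flag as ``the crux'' and ``the main obstacle'': you never exhibit a Schur weight nor verify either hypothesis of Lemma~\ref{L:Schur}, so the argument is not complete. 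Your diagnosis of the difficulty is accurate --- the unweighted test fails, since for $d(x)=0$ the $y$-integral of $\tilde W$ diverges logarithmically at $\partial\Omega$, and it also diverges logarithmically at spatial infinity --- but diagnosing the obstacle is not the same as overcoming it.

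The missing step is in fact much easier than the program you sketch, and none of the region decomposition of Lemma~\ref{L:Hardy Dirichlet}, nor any $|x|$-versus-$|y|$ factors, is needed. The paper uses the single global weight $w(x,y)=\bigl(d(x)/d(y)\bigr)^{\alpha}$ with $0<\alpha<\min\{ps,\,p'\}$. For \eqref{E:Schur hyp1}, since $s-\tfrac\alpha p>0$ one may absorb $d(y)^{s-\alpha/p}\le[d(x)^2+d(y)^2+|x-y|^2]^{(s-\alpha/p)/2}$ into the bracket, leaving
\begin{equation*}
\int_\Omega w(x,y)^{\frac1p}\tilde W(x,y)\,dy\lesssim\int_{\R^d}\frac{d(x)^{\frac\alpha p}}{\bigl[d(x)+|x-y|\bigr]^{d+\frac\alpha p}}\,dy\lesssim 1,
\end{equation*}
which simultaneously handles the boundary singularity and the decay at infinity that you identified as the two competing tasks. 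For \eqref{E:Schur hyp2} one splits into three elementary cases: $|x-y|\le\tfrac12 d(y)$ (where $d(x)\sim d(y)$), $|x-y|>\tfrac12 d(y)$ with $d(x)>d(y)$, and $|x-y|>\tfrac12 d(y)$ with $d(x)\le d(y)$, the last being summed over dyadic shells $|x-y|\sim R\ge d(y)$ using $\int_{|x-y|\sim R}d(x)^{-\alpha/p'}\,dx\lesssim R^{d-\alpha/p'}$ (here $\alpha<p'$ enters). The reason this is so much lighter than Lemma~\ref{L:Hardy Dirichlet} is that $\tilde W$ carries the favourable factor $d(y)^s$ in the numerator and $d(x)^2+d(y)^2$ inside the bracket, so it is far less singular than the Riesz kernel treated there; this is also why no upper restriction on $s$ appears, consistent with the statement. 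Had you carried out this (or an equivalent) weight computation, your proof would be complete and essentially identical to the paper's.
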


\begin{proof}
By the triangle inequality,
\begin{align*}
\text{LHS}&\lesssim \biggl\|\biggl(\sum_{N\in 2^{\Z}}N^{2s}\bigl|\bigl[(\tilde P_N)^k-(\pon)^k\bigr]f\bigr|^2\biggr)^{\frac12}\biggr\|_{\lpr}\\
&\lesssim \biggl\|\biggl(\sum_{N\in 2^{\Z}}N^{2s}\biggl|\int K_N^k(x,y)f(y)\,dy\biggr|^2\biggr)^{\frac 12}\biggr\|_{\lpr}\\
&\lesssim \biggl\|\sum_{N\in 2^{\Z}} N^s \int|K_N^k(x,y)||f(y)| \,dy\biggr\|_{\lpr}\\
&\lesssim \biggl\|\int\ \biggl(\sum_{N\in 2^{\Z}} N^s |K_N^k(x,y)|\biggr)\ |f(y)|\,dy\biggr\|_{\lpr}.
\end{align*}
Using Lemma~\ref{L:kernel}, we obtain
\begin{align*}
\sum_{N\in 2^{\Z}} N^s|K_N^k(x,y)|
&\lesssim_k \sum_{N\in 2^{\Z}}N^{d+s}e^{-cN^2[d(x)^2+d(y)^2+|x-y|^2]}\\
&\lesssim_k \sum_{N^2\leq [d(x)^2+d(y)^2+|x-y|^2]^{-1}} N^{d+s} \\
&\quad  +\sum_{N^2> [d(x)^2+d(y)^2+|x-y|^2]^{-1}} \frac{N^{d+s}}{\bigl(N^2[d(x)^2+d(y)^2+|x-y|^2]\bigr)^{d+s}}\\
&\lesssim_k \bigl[d(x)^2+d(y)^2+|x-y|^2\bigr]^{-\frac{d+s}2}.
\end{align*}
Therefore, to prove the proposition it suffices to show that the kernel $K:\R^d\times\Omega\to \R$ given by
\begin{align*}
K(x,y)=d(y)^s\bigl[d(x)^2+d(y)^2+|x-y|^2\bigr]^{-\frac{d+s}2}
\end{align*}
defines an operator bounded from $L^p(\Omega)$ to $L^p(\R^d)$. To establish this, we will use Lemma~\ref{L:Schur} with weight given by
$$
w(x,y)=\biggl(\frac{d(x)}{d(y)}\biggr)^{\alpha} \quad\text{and}\quad 0<\alpha< p'\wedge ps.
$$
We first verify that hypothesis \eqref{E:Schur hyp1} holds in this setting; indeed,
\begin{align*}
\int_{\Omega} w(x,y)^{\frac1 p}K(x,y)\, dy&=\int_{\Omega} \frac{d(x)^{\frac\alpha{p}}d(y)^{s-\frac{\alpha}{p}}}{[d(x)^2+d(y)^2+|x-y|^2]^{\frac{d+s}2}}\,dy\\
&\lesssim \int_{\Omega} \frac{d(x)^{\frac\alpha{p}}}{[d(x)+|x-y|]^{d+\frac\alpha{p}}}\,dy\lesssim 1,
\end{align*}
where in order to obtain the last inequality, we consider separately the regions $|y-x|\leq d(x)$ and $|y-x|>d(x)$.

Next we verify that hypothesis \eqref{E:Schur hyp2} holds in this setting.  We have
\begin{align}
\int_{\R^d} w(x,y)^{-\frac{\alpha}{p'}}K(x,y)\,dx=\int_{\R^d}\frac{d(y)^{s+\frac\alpha{p'}}}{d(x)^{\frac\alpha{p'}}[d(x)^2+d(y)^2+|x-y|^2]^{\frac{d+s}2}}\,dx.\label{xinte}
\end{align}
On the region where $|x-y|\le d(y)/2$, we have $d(x)\sim d(y)$. Thus, we may bound the contribution of this region by
\begin{align*}
d(y)^{-d}\int_{|x-y|\leq\frac12d(y)}\, dx\lesssim 1.
\end{align*}
The contribution of the region where $|x-y|> d(y)/2$ and $d(x)>d(y)$ is bounded by
\begin{align*}
d(y)^s\int_{|x-y|>d(y)/2}\frac 1{|x-y|^{d+s}}\,dx\lesssim 1.
\end{align*}
Finally, we estimate the contribution of the region where $|x-y|> d(y)/2$ and $d(x)\le d(y)$ to the right-hand side of \eqref{xinte} by
\begin{align*}
d(y)^{s+\frac{\alpha}{p'}}\sum_{R\ge d(y)}R^{-d-s}\int_{|x-y|\sim R}\frac{dx}{d(x)^{\frac\alpha{p'}}}
&\lesssim d(y)^{s+\frac{\alpha}{p'}}\sum_{R\ge d(y)}R^{-d-s}R^{d-\frac\alpha{p'}}
\lesssim 1.
\end{align*}
This completes the proof of the proposition.
\end{proof}

Finally, we are able to show the equivalence between the Sobolev
spaces.

\begin{proof}[Proof of Theorem~\ref{T:main}] Fix $f\in C_c^\infty(\Omega)$ and choose an integer $k\geq 1$ such that $2k>s$.  Using Theorem~\ref{T:sq}, the triangle inequality,
Proposition~\ref{P:diff}, and Lemma~\ref{L:Hardy Dirichlet}, we estimate
\begin{align*}
\||\nabla|^s f\|_p
&\sim \biggl\|\biggl(\sum_{N\in 2^{\Z}} N^{2s}\bigl|(\tilde P_N)^k f\bigr|^2\biggr)^{\frac 12}\biggr\|_p\\
&\lesssim \biggl\|\biggl(\sum_{N\in 2^{\Z}}N^{2s}\bigl|(\pon)^k f\bigr|^2\biggr)^{\frac12}\biggr\|_p
    +\biggl\|\biggl(\sum_{N\in 2^{\Z}}N^{2s}\bigl|\bigl[(\tilde P_N)^k-(\pon)^k\bigr] f\bigr|^2\biggr)^{\frac 12}\biggr\|_p\\
&\lesssim \|(-\ld)^{\frac s2} f\|_p+\biggl\|\frac{f(x)}{d(x)^s}\biggr\|_p\\
&\lesssim \|(-\ld)^{\frac s2} f\|_p.
\end{align*}
Arguing similarly and using Lemma~\ref{L:Hardy Laplacian} in place of Lemma~\ref{L:Hardy Dirichlet}, we obtain
\begin{align*}
\|(-\ld)^{\frac s2} f\|_p
&\sim \biggl\|\biggl(\sum_{N\in2^{\Z}}N^{2s}\bigl|(\pon)^k f\bigr|^2\biggr)^{\frac 12}\biggr\|_p\\
&\lesssim \biggl\|\biggl(\sum_{N\in 2^{\Z}}N^{2s}\bigl|(\tilde P_N)^kf\bigr|^2\biggr)^{\frac 12}\biggr\|_p
    +\biggl\|\biggl(\sum_{N\in 2^{\Z}} N^{2s}\bigl|\bigl[(\tilde P_N)^k-(\pon)^k\bigr] f\bigr|^2\biggr)^{\frac 12}\biggr\|_p\\
&\lesssim \||\nabla|^s f\|_p+\biggl\|\frac{f(x)}{d(x)^s}\biggr\|_p\\
&\lesssim \||\nabla|^s f\|_p.
\end{align*}
This completes the proof of the theorem.
\end{proof}

\begin{proof}[Proof of Corollary~\ref{C:Riesz}]
We must show that
\begin{equation}\label{g and k}
 \bigl\| (-\Delta_{\R^d})^{\frac s2} (-\Delta_{\Omega})^{-\frac s2} k \bigr\|_{L^p(\R^d)}  \lesssim \|k\|_{L^{p}(\Omega)}
\end{equation}
for a dense set of $k\in L^{p}(\Omega)$.  In view of Lemma~\ref{L:density}, we may choose $k=(-\Delta_{\Omega})^{s/2} f$ with $f \in C^\infty_c(\Omega)$.
Then, by Theorem~\ref{T:main},
\begin{align*}
\text{LHS\eqref{g and k}} = \bigl\| (-\Delta_{\R^d})^{\frac s2} f\bigr\|_{L^p(\R^d)} \lesssim  \bigl\| (-\Delta_{\Omega})^{\frac s2} f\bigr\|_{L^p(\Omega)}
	= \bigl\| k \bigr\|_{L^p(\Omega)},
\end{align*}
which proves \eqref{g and k} and so the corollary.
\end{proof}

\section{Counterexamples}\label{SS:counter}

The two propositions in this section prove that the conditions in Corollary~\ref{C:Riesz} are sharp, including the question of endpoints.

\begin{proposition}\label{P:1+1/p}
Corollary~\ref{C:Riesz} does not extend to $s \geq 1+\smash[b]{\frac1p}$.  More concretely, assuming $s \geq 1+\frac1p$, there exists $f\in L^p(\Omega)$ in the $L^p$ domain of $(-\Delta_\Omega)^{-s/2}$ and $\{g_n\} \subseteq C^\infty_c(\R^d)$ so that 
\begin{equation*}
\sup_n\|g_n\|_{L^{p'}(\R^d)} \lesssim 1 \qtq{but} \int_\Omega \bigl[(-\Delta_{\R^d})^{s/2}g_n\bigr](x) \bigl[(-\Delta_{\Omega})^{-s/2}f\bigr](x) \,dx \to \infty.
\end{equation*}
\end{proposition}

\begin{proof}
Choose a non-negative $h\in C^\infty_c(\Omega)$ and set $f = (-\Delta_\Omega)^{s/2} e^{\Delta_\Omega} h$.  Then $f\in L^p(\Omega)$ and $(-\Delta_\Omega)^{-s/2}f = e^{\Delta_\Omega} h\in L^p(\Omega)$.  Moreover, $e^{\Delta_\Omega} h$ is supported in $\bar\Omega$.  Now, toward a contradiction, suppose
\begin{equation*}
\int_\Omega \bigl[(-\Delta_{\R^d})^{s/2}g\bigr](x) \bigl[e^{\Delta_\Omega} h \bigr](x) \,dx \lesssim \|g\|_{L^{p'}(\R^d)} \quad\text{for all $g\in C^\infty_c(\R^d)$.}
\end{equation*}
By the Hardy inequality in \cite[\S1.5.7]{Triebel}, it follows that
\begin{equation}\label{hhardy}
\bigl\| \dist(x,\Omega^c)^{-s} e^{\Delta_\Omega} h \bigl\|_{L^p(\Omega)}  <\infty.
\end{equation}
This is easily falsified:  From the lower bounds on the heat kernel given in Theorem~\ref{T:heat}, we have $e^{\Delta_\Omega} h(x) \gtrsim \dist(x,\Omega^c)$ for
$x$ in bounded subsets of $\Omega$.  Thus $\text{LHS\eqref{hhardy}}=\infty$ precisely because $s \geq 1+\frac1p$.
\end{proof}

Next we show the necessity of the assumption $p<ds$.  Our argument gives further weight to the guiding principle ennunciated in \cite{GuiHassell}, namely, that boundedness of
Riesz transforms should be dictated by the absence of bounded non-constant zero-energy eigenfunctions.

\begin{proposition}\label{P:d/p}
The equivalence \eqref{E:equiv norms} fails when $d\geq 3$ and $d/p\leq s < 1+\tfrac1p$.  In particular, Corollary~\ref{C:Riesz} does not extend to such $s$.
\end{proposition}

\begin{proof}
We will construct an explicit one-parameter family of functions $f_R:\R^d\to\R$ that falsifies \eqref{E:equiv norms} in the case that $\Omega=\R^d\setminus B(0,1)$.  Henceforth, we consider only dyadic parameter values $R\gg 1$; all estimates will be uniform for $R$ sufficiently large.

The functions $f_R$ will not be compactly supported in $\Omega$ as required in \eqref{E:equiv norms}; however, they will be smooth and vanish at the boundary.  The arguments in the proof of Lemma~\ref{L:density} then show that functions $f_R$ may be approximated by functions in $C^\infty_c(\Omega)$ simultaneously in both topologies appearing in \eqref{E:equiv norms}.  Indeed, this can be done just using smooth cutoffs.  These approximation arguments are the source of the restriction $s<1+\frac1p$.

It is not difficult to construct smooth functions $\phi_R(x)$ so that
$$
\phi_R(x) = \log(R/|x|) \log^{-1}(R) \qtq{for $1\leq|x|\leq R/2$,} 
$$
$\phi_R(x) = 0$ for $|x|\geq R$, and
$$
\bigl| \partial_x^\alpha \phi_R(x) \bigr| \lesssim_\alpha  R^{-|\alpha|} \log^{-1}(R) \qtq{for} \tfrac R2\leq |x|\leq R
$$
and all multi-indices $\alpha$ with $|\alpha|\geq 0$.  We then define
$$
f_R(x) = \phi_R(x) \bigl[1 - |x|^{2-d}\bigr] \quad \text{for $x\in\Omega$} \qtq{and} f_R(x) = 0 \quad\text{otherwise}.
$$
Note that the term in square brackets is harmonic on $\Omega$ and vanishes on $\partial\Omega$.

From the explicit form of its construction, it is not difficult to see that
\begin{align*}
\bigl\|f_R\bigr\|_{L^p(\rho\leq|x|\leq 2\rho)} &\lesssim \rho^{\frac dp}   \\
\bigl\| \Delta_\Omega f_R \bigr\|_{L^p(\rho\leq|x|\leq 2\rho)} &\lesssim \rho^{\frac dp - 2} \log^{-1}(R) 
\end{align*}
uniformly for $1\leq\rho\leq R/2$.  Consequently,
\begin{equation*}
\bigl\| |x|^{-\frac dp} f_R\bigr\|_{L^p(\Omega)} \lesssim  \log^{\frac1p}(R)  \qtq{and}
\bigl\| |x|^{2-\frac dp} \Delta_\Omega f_R \bigr\|_{L^p(\Omega)} \lesssim \log^{\frac1p-1}(R).
\end{equation*}
This in turn gives us the bound
\begin{align}\label{E:loggy}
\bigl\| (-\Delta_\Omega)^{s/2}  f_R \bigr\|_{L^p(\R^d)} \lesssim \bigl\| |x|^{s-\frac dp}(-\Delta_\Omega)^{s/2}  f_R \bigr\|_{L^p(\R^d)}\lesssim \log^{\frac1p-\frac s2}(R)
\end{align}
via complex interpolation.  (Recall that the needed bounds on $(-\Delta_\Omega)^{it}$ follow from  Theorem~\ref{T:Mikhlin}.)

The key consequence of \eqref{E:loggy} is that $(-\Delta_\Omega)^{s/2} f_R\to 0$ in $L^p(\Omega)$ as $R\to\infty$. To complete the proof of the proposition, we need only show
\begin{align}\label{p/d lower}
\bigl\| (-\Delta_{\R^d})^{s/2}  f_R \bigr\|_{L^p(\R^d)} \gtrsim 1 \quad\text{uniformly in } R\gg1.
\end{align}
To this end, choose $\psi\in C^\infty_c(\R^d)$ so that $\psi(x)<0$ when $|x|<1$, $\psi(x)\geq 0$ when $|x|>1$, and
$$
\int_{\R^d} \psi(x) \, dx= 0 \qtq{and} \int_{\R^d} x \psi(x) \, dx= 0.
$$
(This is simplest if one takes $\psi$ radially symmetric.)

Noting that
$$
|x-y|^{s-d}= |x|^{s-d} + (d-s) \tfrac{x\cdot y}{|x|^{d+2-s}} + O\bigl(|x|^{s-d-2}\bigr) \qtq{for} |y|\lesssim 1
$$
and using the cancellation conditions on $\psi$, we obtain
$$
\biggl| \int_{\R^d} |x-y|^{s-d} \psi(y)\,dy  \biggr| \lesssim (1+|x|)^{s-d-2}.
$$
Consequently, $(-\Delta_{\R^d})^{-s/2}\psi\in L^{p'}(\R^d)$.  On the other hand, it is clear from the construction that $\int f_R(x) \psi(x)\,dx \gtrsim 1$.  Thus,
$$
1 \lesssim \langle\psi,f_R\rangle_{L^2(\R^d)} \lesssim \langle(-\Delta_{\R^d})^{-s/2}\psi,(-\Delta_{\R^d})^{s/2} f_R \rangle_{L^2(\R^d)}
	\lesssim_\psi  \bigl\| (-\Delta_{\R^d})^{s/2}  f_R \bigr\|_{L^p(\R^d)},
$$
which proves \eqref{p/d lower} and so completes the proof of the proposition.
\end{proof}

\section{Heat kernel regularity}\label{S:HKR}

In this section, we discuss for which values of $1<p<\infty$ one has
\begin{equation}\label{E:HKR}
 \bigl\| \sqrt{t} \,\nabla e^{t\Delta_\Omega} f \bigr\|_{L^p(\Omega)} \lesssim \| f \|_{L^p(\Omega)} \quad\text{uniformly for $t>0$.}
\end{equation}
We will give a sharp answer excepting the question of the endpoint.

The investigation of \eqref{E:HKR} on complete Riemannian manifolds and its connection to the boundedness of Riesz transforms forms the central topic of the paper \cite{ACDH}.  As noted in the introduction, it also forms a starting point in the analysis of multilinear Besov-space estimates in the work of Ivanovici--Planchon.  In this section, we prove

\begin{proposition}  For $d\geq 3$, the uniform estimate \eqref{E:HKR} holds for $p<d$ and fails for $p>d$.
\end{proposition}
 
\begin{proof}
The positive result follows immediately from the boundedness of Euclidean Riesz tranforms, Corollary~\ref{C:Riesz}, and Theorem~\ref{T:Mikhlin} together with the factorization
$$
\sqrt{t} \,\nabla e^{t\Delta_\Omega} = \bigl[  \nabla (-\Delta_{\R^d})^{-1/2} \bigr] \bigl[  (-\Delta_{\R^d})^{1/2}	(-\Delta_{\Omega})^{-1/2} \bigr] 
	\bigl[ (-t\Delta_\Omega)^{1/2} e^{t\Delta_\Omega} \bigr]. 
$$

We will prove the failure of \eqref{E:HKR} by explicit construction in the setting $\Omega=\R^d\setminus B(0,1)$.  For reasons of clarity, we proceed by contradiction, assuming that \eqref{E:HKR} did hold for some $p>d$.

Given $\eps>0$, let $g(t,x)$ denote the following solution of the heat equation in $\R^d$:
$$
g(t,x) = (1+\eps^2 t)^{-d/2} \exp\bigl\{ - \tfrac{\eps^2 |x|^2}{4(1+\eps^2 t)}\bigr\}.
$$
We also define $u(t,x)=(1-|x|^{2-d})g(t,x)$, which we claim is almost a solution to the Dirichlet heat equation in $\Omega$, at least for $0\leq t \leq \eps^{-2}$.  Indeed, for these values of $t$,
\begin{equation*}
(\partial_t - \Delta_\Omega) u(t,x) = -2(d-2) \tfrac{x}{|x|^d} \cdot \nabla g \qtq{and}
	\bigl\|\tfrac{x}{|x|^d} \cdot \nabla g(t,x)\bigr\|_{L^p(\Omega)} \lesssim  \eps^2.
\end{equation*}
Thus by Duhamel's formula and the assumption that \eqref{E:HKR} holds, we deduce that
\begin{align}
\bigl\| \sqrt{t}\, \nabla [e^{t\Delta_\Omega} u(0) - u(t)] \bigr\|_{L^p(\Omega)}
	&\lesssim \int_0^t \bigl\| \sqrt{t}\, \nabla e^{(t-s)\Delta_\Omega} \tfrac{x}{|x|^d} \cdot \nabla g(s) \bigr\|_{L^p(\Omega)} \, dt \notag\\
&\lesssim \int_0^t \sqrt{\tfrac{t}{t-s}} \, \bigl\| \tfrac{x}{|x|^d} \cdot \nabla g(s) \bigr\|_{L^p(\Omega)} \, dt \label{E:dudiff}\\
&\lesssim \eps^2 t. \notag
\end{align}

On the other hand, we have
$$
\nabla u(t,x) = (d-2) \tfrac{x}{|x|^d} g(t,x) - (1-|x|^{2-d}) \tfrac{\eps^2 x }{2(1+\eps^2t)} g(t,x),
$$
from which we see that if $\eps<2^{-d/2}$, then
$$
\bigl\| \sqrt{t}\, \nabla u(t,x) \bigr\|_{L^p(\Omega)} \gtrsim \bigl\| \sqrt{t}\,g(t,x) \bigr\|_{L^p(1<|x|<2)} \gtrsim \sqrt{t} (1+\eps^2 t)^{-d/2}.
$$
Combining this with \eqref{E:dudiff}, we deduce that
$$
\bigl\| \sqrt{t}\, \nabla e^{t\Delta_\Omega} u(0) \bigr\|_{L^p(\Omega)} \gtrsim \sqrt{t}
$$
uniformly for $\eps<2^{-d/2}$ and $0 \leq t \leq \eps^{-2}$.  However
$
\| u(0) \|_{L^p(\Omega)} \sim \eps^{-d/p}
$
and so choosing $t=\eps^{-2}$ and sending $\eps\to 0$ we see that \eqref{E:HKR} cannot hold when $p>d$.
\end{proof}


\end{document}